\date{} 
\title{The near-critical planar FK-Ising model}
\author{Hugo Duminil-Copin \and Christophe Garban \and G\'abor Pete}
\newcolumntype{M}[1]{>{\centering}m{#1}}
\newif\ifhyper\IfFileExists{hyperref.sty}{\hypertrue}{\hyperfalse}  
\ifhyper\usepackage{hyperref}
\newif\iffigures\figurestrue
\numberwithin{equation}{section}
\numberwithin{figure}{section}
\newtheorem{theorem}{Theorem}
\numberwithin{theorem}{section}
\newtheorem{corollary}[theorem]{Corollary}
\newtheorem{lemma}[theorem]{Lemma}
\newtheorem{proposition}[theorem]{Proposition}
\newtheorem{question}{Question}
\newtheorem{conjecture}[theorem]{Conjecture}
\newtheorem{definition}[theorem]{Definition}
\theoremstyle{remark}\newtheorem{remark}[theorem]{Remark}
\theoremstyle{plain}
\newtheorem{phenomenon}[theorem]{Phenomenon}
\numberwithin{table}{section}
\let\qqed=\qed
\def\QED{\qqed\medskip}
\let\qed=\QED
\newcommand{\Prob} {{\mathbb P}}
\newcommand{\e}{\mathrm e}
\newcommand{\ep}{\varepsilon}
\newcommand{\Z}{\mathbb{Z}}
\def\T{\mathbb T}
\def\H{\mathbb{H}}
\def\SLEkk#1/{$\mathrm{SLE}(#1)$}
\def\SLEr#1/{$\mathrm{SLE(\kappa;#1)}$}
\def\SLEkr#1;#2/{$\mathrm{SLE(#1;#2)}$}
\def\SLEk/{\SLEkk{\kappa}/}
\def\SLEtwo/{\SLEkk2/}
\def\SLEab/{\SLEkr 4; {a/\hco-1}, {b/\hco-1}/}
\def\Ito/{It\^o}
\def \eps {\epsilon}
\def \P {\Prob}
\def\md{\mid}
\def\Bb#1#2{{\def\md{\bigm| }#1\bigl[#2\bigr]}}
\def\Pb{\Bb\P}
\def \E {{\mathbb E}}
\def\fk{\mathrm{FK}}
\def\free{\mathrm{free}}
\def\wired{\mathrm{wired}}
\def \proof {{ \medbreak \noindent {\bf Proof.} }}
\def\proofof#1{{ \medbreak \noindent {\bf Proof of #1.} }}
\def\bl{\bigl}
\def\bl{\begin{lemma}}
\def\el{\end{lemma}}
\def\bth{\begin{theorem}}
\def\eth{\end{theorem}}
\def\bc{\begin{corollary}}
\def\ec{\end{corollary}}
\def\bcj{\begin{conjecture}}
\def\ecj{\end{conjecture}}
\def\bpr{\begin{proposition}}
\def\epr{\end{proposition}}
\def\bde{\begin{definition}}
\def\ede{\end{definition}}
\newcommand{\be}{\begin{eqnarray}}
\newcommand{\ee}{\end{eqnarray}}
\newcommand{\bes}{\begin{eqnarray*}}
\newcommand{\ees}{\end{eqnarray*}}
\def\Spec{\mathscr{S}}
\def\CC{\mathcal C}
\def\1{1}
\def\lala(#1,#2){\lambda_{#1,#2}}
\def\Ess_#1{\E |\Spec_{f_{#1}}|}
\def\EI{\mathcal{I}} 
\def\cloud{\mathsf{cloud}}
\def\ni{\noindent}
\def\bi{\begin{itemize}}
\def\ei{\end{itemize}}
\def\HIC{arXiv:1202.2838}
\def\SmirnovFKI{MR2680496}
\def\SmirnovFKII{SmirnovFKII}
\def\GrimmettFK{MR2243761}
\def\GrimmettCoupling{MR1379156}
\def\Bernoullicity{MR1913108}
\def\LMMSRS1{MR1124260}
\def\PivotalMeasure{arXiv:1008.1378}
\def\GPS1{MR2736153}
\def\DPSL{DPSL}
\def\DFKSL{FKdpsl}
\def\FKpivotals{FKpivotals}
\def\SpecificHeat{SpecificHeat}
\def\Onsager{MR0010315}
\def\Kadanoff{PhysRev.188.859}
\def\FerdinandFisher{FerdinandFisher}
\def\ClementThesis{ClementThesis}
\def\BdTplane{arXiv:0902.1882}
\def\BdTperiodic{arXiv:0812.3848}
\def\HVising{arXiv:1010.0526}
\def\HVselfdual{arXiv:1006.5073}
\def\FKsokal{FKsokal}
\def\SwendsenWang{\SwendsenWang}
\def\WWIsing{MR2560997}
\def\HugoStasBuzios{arXiv:1109.1549}
\def\KestenScaling{MR88k:60174}
\def\NolinKesten{MR2438816}
\def\NolinWerner{MR2505301}
\def\SmirnovPerc{MR1851632}
\def\SmirnovWerner{MR1879816}
\def\BeffaraDim6{MR2078552}
\def\BCKS{MR1868996}
\def\SchSLE{MR1776084}
\begin{document}
\maketitle

\begin{abstract}
We study the near-critical FK-Ising model. First, a determination of the correlation length defined via crossing probabilities is provided.  Second, a phenomenon about the {near-critical} behavior of FK-Ising is highlighted, 
which is completely missing from the case of standard percolation: in any monotone coupling of FK configurations $\omega_p$  ({\em e.g.}, in the one introduced in \cite{\GrimmettCoupling}), as one raises $p$ near $p_c$, the new edges arrive in a self-organized way, so that the {correlation length} is not governed anymore by the number of {pivotal edges} at criticality.
\end{abstract}

\section{Introduction}\label{s.intro}




\paragraph{Phase transition in the random cluster model on $\mathbb Z^2$.}

The random-cluster model with parameters $p\in[0,1]$ and $q\geq 1$ \footnote{In general one could take $q>0$, but we will assume $q\geq 1$ here, in order for the 
{\bf FKG inequality} to hold, see \cite{\GrimmettFK}.} is a probability measure on subgraphs of a finite graph $G=(V,E)$, defined for all 
$\omega\subset E$ by  

\[
\phi_{p,q}(\omega)~:=~ \frac{p^{\#\; \text{open edges}} (1-p)^{\#\; \text{closed edges}} q^{\# \; \text{clusters}}}{Z_{p,q}}\,,
\]
where $Z_{p,q}$ is the normalization constant such that $\phi_{p,q}$ is a probability measure. The most classical example of the random-cluster model is bond percolation, which corresponds to the $q=1$ case. Even though the random-cluster model can be defined on any graph, we will restrict ourselves to the case of the {\em square lattice $\mathbb Z^2$}.
Infinite volume measures can be constructed using limits of the above measures along exhaustions by finite subsets (with different boundary conditions: free, wired, etc). Random-cluster models exhibit a phase transition at some critical parameter $p_c=p_c(q)$. On $\Z^2$, this value does not depend on which infinite volume limit we are using, and, as in standard percolation, below this threshold, clusters are almost surely finite, while above this threshold, there exists (a.s.) a unique infinite cluster. See Subsection~\ref{ss.basic} for details and references.

The critical parameter is known to be equal to $1/2$ for bond percolation on the square lattice. For the random-cluster model with parameter $q=2$ (also called {\em FK-Ising}), $p_c(2) = \frac{\sqrt{2}}{1+\sqrt{2}}$ is known since Onsager \cite{\Onsager} (it is connected via the Edwards-Sokal coupling to the critical temperature of the Ising model). See also the recent \cite{\HVising} for an alternative proof of this fact. More recently,  the general equality $p_c(q)=\frac{\sqrt{q}}{1+\sqrt{q}}$ was proved for every $q\geq 1$ in \cite{\HVselfdual}.

Mathematicians and physicists are interested in the properties of the critical phase $(p,q)=(p_c(q),q)$ itself. Very little is known for general values of $q$, and only $q=1$ and $q=2$ are understood in a satisfactory fashion. For these two cases, the phase transition is known to be of second order ({\em i.e.} continuous): at $p_c$,  almost surely there are only finite clusters. For $q=2$, Smirnov proved the {\bf conformal invariance} of certain macroscopic observables \cite{\SmirnovFKI,\SmirnovFKII}, which, combined with the Schramm-L\"owner Evolution \cite{\SchSLE}, can be used to construct {\bf continuum scaling limits} that retain the macroscopic cluster structure \cite{CDHKS12}. For $q=1$, site percolation on the triangular lattice (a related model) has been proved to be conformally invariant as well \cite{\SmirnovPerc}.

Beyond the understanding of the critical phase, the principal goal of statistical physics is to study the phase transition itself, and in particular the behavior of macroscopic properties (for instance, the density of the infinite-cluster for $p>p_c(q)$). It is possible to relate the critical regime to these thermodynamical properties via the study of the so-called {\bf near-critical regime}. This regime was investigated in \cite{\KestenScaling} in the case of percolation. Many works followed afterward, culminating in a rather good understanding of {dynamical} and {near-critical} phenomena in standard percolation \cite{
\GPS1,
\PivotalMeasure,\DPSL}. The goal of this article is to discuss the near-critical regime in the random-cluster case, and more precisely in the FK-Ising case.

\paragraph{Correlation length of the FK-Ising model.}

The near-critical regime is the study of the random-cluster model of edge-parameter $p$ in the box of size $L$ when $(p,L)$ goes to $(p_c,\infty)$. Note that, on the one hand, if $p$ goes to $p_c$ very quickly the configuration in the box of size $L$ will look critical. On the other hand, if $p$ goes to $p_c$ (from above) too slowly, the random-cluster model will look supercritical.  The typical scale $L=L(p)$ separating these two regimes is called the {\bf correlation length} (or {\bf characteristic length}). 

Let us first define the correlation length formally in the case of percolation ($q=1$). Consider \emph{rectangles} $R$ of the form $[0,n]\times[0,m]$ for $n, m >0$, and translations of them. We denote by $\mathcal C_v(R)$ the event that there exists a \emph{vertical crossing} in $R$, a path from the bottom side $[0,n] \times \{0\}$ to the top side $[0,n] \times \{m\}$ that consists only of open edges. The classical Russo-Seymour-Welsh theorem shows that in the case of critical percolation, crossing probabilities of rectangles of bounded aspect ratio remain bounded away from 0 and 1. A natural way of describing the picture as being critical is to check that crossing probabilities are neither near 0 nor near 1. Mathematically, we thus define the correlation length for every $p<p_c=1/2$ and $\ep>0$ as
$$L_\ep(p)~:=~\inf\big\{n>0\,:\,\P_{p}\big(\mathcal C_v([0,n]^2)\big)\le \ep\big\}\,,$$
and, when $p>p_c=1/2$, as $L_{\ep}(p):=L_\ep(1-p)$, where $1-p$ is the dual edge-weight. The dependence on $\ep$ is not relevant since $L_{\ep}(p)/L_{\ep'}(p)$ remains bounded away from 0 and 1 uniformly in $p$, as shown in \cite{\KestenScaling,\NolinKesten}.
The correlation length was shown to behave like $|p-p_c(1)|^{-4/3+o(1)}$ in the case of percolation \cite{\SmirnovWerner}.

Let us generalize the definition of {correlation length} to the case of FK-Ising percolation (i.e., $q=2$). Since the Russo-Seymour-Welsh theorem has been 
generalized to this case in \cite{DHN10} (see Theorem \ref{RSW critical} in the present text), it is natural to characterize 
the {\it near-critical regime} once again by the fact that crossing probabilities remain strictly between 0 and 1.
An important difference from the $q=1$ case is that one has to take into account the effect of {\it boundary conditions} (see Subsection~\ref{ss.basic} for precise definitions):

\begin{definition}[Correlation length]
Fix $q=2$ and $\rho>0$. For any $n\geq 1$, let $R_n$ be the rectangle $[0,n] \times [0, \rho n]$. 

If $p<p_c(2)$, for every $\eps>0$ and boundary condition $\xi$, define 

\[
L_{\rho, \eps}^\xi(p)~:=~\inf\left\{n>0\,:\,\phi_{p,2,R_n}^\xi \big(\mathcal C_v(R_n)\big)\le \ep\right\}\,,
\]
where $\phi_{R,p,2}^{\xi}$ denotes the random-cluster measure on $R$ with parameters $(p,2)$ and boundary condition $\xi$. If $p>p_c(2)$, define similarly
\[
L_{\rho, \eps}^\xi(p)~:=~\inf\left\{n>0\,:\,\phi_{p,2,R_n}^\xi \big(\mathcal C_v(R_n)\big)\ge 1-\ep\right\}\,.
\]
\end{definition}

\paragraph{Statements of the results.} The first result of this paper is a determination of the behavior of $L^\xi_{\rho,\ep}(p)$ when $p$ goes to $p_c$ for $q=2$. 

\begin{theorem}\label{thm:correlation length}
Fix $q=2$. For every $\ep ,\rho >0$, there is a constant $c=c(\eps,\rho)$ such that 
\[
c \frac {1}{|p-p_c|}  \le L_{\rho,\eps}^\xi(p)  \le c^{-1}  \frac{1}{|p-p_c|} \log \frac {1}{|p-p_c|}
\]
for all $p\neq p_c$, whatever the choice of the boundary condition $\xi$ is. Moreover, for $\rho>1$, the logarithmic factor can be omitted. 
\end{theorem}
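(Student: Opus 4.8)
The plan is to obtain the two bounds by rather different arguments, both exploiting the sharp-threshold / influence machinery together with the RSW theorem for critical FK-Ising (Theorem~\ref{RSW critical}) to compare crossing probabilities at $p$ with those at $p_c$.

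\emph{Lower bound $L^\xi_{\rho,\ep}(p)\ge c/|p-p_c|$.} Here I would argue that if $n\ll 1/|p-p_c|$, then the crossing probability at $p$ in $R_n$ stays bounded away from $0$ and $1$, uniformly in the boundary condition. Starting from the critical case, where RSW gives $\phi^\xi_{p_c,2,R_n}(\mathcal C_v(R_n))$ bounded away from $0$ and $1$ uniformly in $\xi$ and $n$, the natural tool is Russo's formula: $\frac{d}{dp}\phi_{p,2,R_n}(\mathcal C_v(R_n))$ is comparable (up to the bounded factor coming from the $q=2$ reweighting and the FKG/finite-energy estimates) to the expected number of pivotal edges for $\mathcal C_v(R_n)$. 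By the RSW-based a~priori estimates, the four-arm-type bound shows this expected number of pivotals is at most $O(n^2\,\alpha_4(n))$; using the (known, via RSW and quasi-multiplicativity) polynomial decay of the four-arm probability $\alpha_4(n)\le n^{-\delta}$ for some $\delta>0$, one gets that the total variation of the crossing probability as $p$ moves by $1/n$ is $O(n\cdot n\alpha_4(n)/n)=O(n\,\alpha_4(n))$, which is $o(1)$. Integrating Russo's formula from $p_c$ to $p$ over an interval of length $|p-p_c|\le c/n$ therefore changes the crossing probability by at most a small constant, so it remains in $[\ep, 1-\ep]$ and $L^\xi_{\rho,\ep}(p)>n$. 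The only subtlety is handling arbitrary boundary conditions $\xi$; this is dealt with by monotonicity (free $\le \xi \le$ wired) and the fact that the RSW estimates of \cite{DHN10} are stated uniformly over boundary conditions.

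\emph{Upper bound $L^\xi_{\rho,\ep}(p)\le c^{-1}\frac{1}{|p-p_c|}\sqrt{\log\frac1{|p-p_c|}}$.} This is the harder direction and is where the $\sqrt{\log}$ correction enters. I would use a sharp-threshold argument of the Kahn--Kalai--Linial / Talagrand type adapted to the random-cluster measure (as in \cite{\RSWfk, DHN10}): if all individual edge-influences on the crossing event $\mathcal C_v(R_n)$ are small, then the threshold window in $p$ is $O(1/\log(\text{something}))$. The key input is an $L^2$ (or sum-of-influences) estimate: by RSW one shows $\sum_e \mathrm{Inf}_e(\mathcal C_v(R_n))^2 \lesssim$ (crossing probability) $\times$ (max influence), and the max influence of a single edge is at most $\alpha_4(n)\lesssim n^{-\delta}$; but more precisely, for the FK-Ising four-arm exponent one expects a \emph{logarithmic} rather than polynomial gain is \emph{not} available, so instead the Talagrand bound gives that $\phi_{p,2,R_n}(\mathcal C_v)$ jumps from $\ep$ to $1-\ep$ over a $p$-window of size $O\big(\frac{1}{n\sqrt{\log n}}\big)^{-1}$... more carefully: Talagrand's inequality yields $\frac{d}{dp}\log\frac{\phi}{1-\phi}\gtrsim \big(\sum_e \mathrm{Inf}_e\big)\big/\log\frac{1}{\max_e \mathrm{Inf}_e}$, and combined with the bound $\sum_e \mathrm{Inf}_e \gtrsim n\,\alpha_4(n)^{?}$... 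I would instead follow the route of \cite{DHN10}: the sum of influences for crossing $R_n$ is bounded below (via a second-moment / RSW argument) by a quantity of order $n^{a}$ for some $a>0$, while $\log(1/\max \mathrm{Inf})\asymp \log n$; this forces the crossing probability to reach $1-\ep$ once $|p-p_c|\gtrsim \frac{\log n}{n^{a+1}}$ or, after the change of variables optimizing the box size, once $n\gtrsim \frac{1}{|p-p_c|}\sqrt{\log\frac{1}{|p-p_c|}}$.

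\emph{Main obstacle.} The delicate point — and the source of the $\sqrt{\log}$ gap — is that the sharp-threshold (Talagrand-type) inequality for FK percolation loses a logarithmic factor compared to the true correlation-length exponent, and one has to combine it with the precise RSW-derived lower bound on the sum of influences, uniformly over all boundary conditions, to make the window estimate quantitative. Extracting the exponent carefully, and in particular tracking how the boundary conditions affect both Russo's formula and the influence sums (so that the constants $c(\ep,\rho)$ do not degenerate), is the technical heart of the argument; the rest is RSW, FKG, finite-energy, and quasi-multiplicativity as developed in \cite{DHN10,\RSWfk}.
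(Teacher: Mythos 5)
Your proposal takes a route entirely different from the paper's, and unfortunately it runs into gaps that are, in a sense, the very phenomena the paper is about.

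\textbf{Lower bound.} You propose to control $\frac{d}{dp}\phi^\xi_{p,2,R_n}(\mathcal C_v(R_n))$ via Russo's formula and the bound ``expected number of pivotals $\lesssim n^2\alpha_4(n)$.'' For $q>1$ Russo's formula gives the sum of the \emph{conditional influences} $I^p_A(e)=\phi(A\mid e\text{ open})-\phi(A\mid e\text{ closed})$ (Proposition~\ref{russo_influence}), and the paper stresses --- this is Subsection~\ref{ss.influence} and one of the central points of the article --- that for $q>1$ this influence is \emph{not} comparable to the pivotal probability: the pivotal exponent $\xi_4(2)=35/24$, while the influence exponent $\iota(2)$ must equal $1$ for the answer to come out as $\nu=1$. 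If your bound $\frac{d}{dp}\phi\lesssim n^2\alpha_4(n)= n^{13/24+o(1)}$ were correct, integrating it from $p_c$ to $p$ would show the crossing probability is nearly unchanged for $|p-p_c|\lesssim n^{-13/24}$, giving $L(p)\gtrsim|p-p_c|^{-24/13}$ --- but this contradicts the theorem's own upper bound $L(p)\lesssim|p-p_c|^{-1}\sqrt{\log}$. The upper bound on the derivative is in fact $\asymp n^{2-\iota(2)}\asymp n$, and establishing this requires a genuinely new tool; the paper gets the lower bound on $L(p)$ not from Russo-type integration but directly from Theorem~\ref{RSW offcritical}, whose proof rests on the massive harmonicity of Smirnov's fermionic observable and its relation to connection probabilities (Lemma~\ref{boundary}, Propositions~\ref{low strip},~\ref{estimate in H}).

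\textbf{Upper bound.} The Talagrand/KKL sharp-threshold inequality gives a lower bound on $\frac{d}{dp}\log\frac{\phi}{1-\phi}$ of order $\log(1/\max_e I^p_A(e))\asymp\log n$, hence a window of size $O(1/\log n)$ in $p$ --- which translates into $L(p)\lesssim\exp(c/|p-p_c|)$, far weaker than the claimed polynomial bound. To get $L(p)\lesssim|p-p_c|^{-1}\sqrt{\log}$ one must show the sum of influences is $\gtrsim n$ at every scale up to $L(p)$, which is the same unproven quantitative input (equivalently $\iota(2)=1$) as in the lower bound; a ``second-moment / RSW'' argument alone does not deliver it. The paper instead proves Lemma~\ref{lem:decay} (polynomial decay of the one-arm event once $p<p_c-C\sqrt{\log n}/n$) from the massive-walk representation of the observable (equation~\eqref{upp} of Proposition~\ref{low strip}), then feeds this into a finite-volume union-bound argument with RSW at criticality. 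That logarithmic loss comes from the massive-random-walk estimate $\cos2\alpha\le 1-c(\log n)/n^2$, not from a Talagrand-type inequality.

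In short: your approach imports the $q=1$ pivotal machinery, but the theorem is specifically a witness to the fact that this machinery misidentifies the derivative for $q>1$, and the quantitative ingredient it would need (the influence exponent) is exactly what the fermionic observable supplies.
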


Note that the left-hand side of the previous theorem has the following reformulation, which we state as a theorem (this result is interesting on its own, since it provides estimates on crossing probabilities which are uniform in boundary conditions away from the critical point):

\begin{theorem}[RSW-type crossing bounds] \label{RSW offcritical}
For $\lambda>0$ and $\rho\geq 1$, there exist two constants $0 <c_- \leq c_+<1$ such that for any rectangle $R$ with side lengths $n$ and $m \in [ \frac 1\rho n, \rho n]$, any $p\in[p_c-\frac \lambda n,p_c+\frac \lambda n]$ and any boundary condition $\xi$, one has
$$c_- \leq \phi_{R,p,2}^{\xi}(\mathcal C_v(R))\leq c_+\,.$$
\end{theorem}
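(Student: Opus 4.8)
The plan is to bootstrap the critical RSW theorem (Theorem \ref{RSW critical} of \cite{DHN10}) to the off-critical window $p\in[p_c-\lambda/n, p_c+\lambda/n]$ by controlling the Radon--Nikodym cost of shifting $p$ by $O(1/n)$ over a box of size $O(n)$. First I would recall that at $p=p_c$, the RSW theorem gives, for rectangles of aspect ratio in $[1/\rho,\rho]$ and \emph{any} boundary condition $\xi$, crossing probabilities $\phi^\xi_{R,p_c,2}(\mathcal C_v(R))\in[a_-,a_+]$ with $0<a_-\le a_+<1$ depending only on $\rho$; the uniformity in $\xi$ is the key input, obtained by sandwiching between the free and wired measures via monotonicity. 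The goal is then to show that replacing $p_c$ by $p=p_c\pm\lambda/n$ changes these probabilities by at most a multiplicative constant depending only on $\lambda$ (and $\rho$).

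The core step is a differential-inequality / finite-energy estimate. By the standard formula for the derivative of a random-cluster expectation in $p$ (the analogue of Russo's formula), $\frac{d}{dp}\log\phi^\xi_{R,p,2}(\mathcal C_v(R))$ is controlled by $\frac{1}{p(1-p)}$ times the expected number of pivotal edges for $\mathcal C_v(R)$, and more usefully by $\frac{1}{p(1-p)}\sum_{e}\phi^\xi_{R,p,2}(e\text{ pivotal})$. To integrate this over a window of width $2\lambda/n$, I need a bound of the form $\sum_{e\in R}\phi^\xi_{R,p,2}(e\text{ pivotal for }\mathcal C_v(R))\le C n$, uniformly in $p$ in the window and in $\xi$. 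This is exactly the content of the off-critical ``at most $O(n^2\cdot n^{-1}) = O(n)$ pivotals'' bound, and the clean way to get it uniformly is: (i) at $p_c$, quasi-multiplicativity plus RSW gives that the expected number of pivotals in $R$ for a crossing is $n^2\,\alpha_4(n)\,\mathrm{poly}$, and for FK-Ising one has the a priori bound $n^2\alpha_4(n) = O(n)$ coming from the fact that $\alpha_4(n)\le n^{-1+o(1)}$ — but to avoid circularity one should instead use the \emph{second-moment / energy} argument directly: the number of pivotals on any fixed crossing path is at most its length, and a union bound with the one-arm and four-arm estimates at $p_c$, combined with the observation that we only move $p$ by $O(1/n)$ so the whole interval stays ``within one correlation length'' of $p_c$ in the weak a priori sense, keeps everything comparable to the critical quantities. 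Concretely, I would prove by a continuity/bootstrap argument that the set of $p$ in $[p_c-\lambda/n,p_c+\lambda/n]$ for which the pivotal sum is $\le C n$ is both open and closed and contains $p_c$, for $C=C(\rho)$ large enough, using that as long as crossing probabilities stay bounded away from $0$ and $1$ the RSW machinery (with uniform boundary conditions) regenerates the pivotal bound.

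Once the pivotal bound $\sum_e\phi^\xi(e\text{ piv})\le Cn$ holds uniformly on the window, integrating the logarithmic derivative from $p_c$ to $p$ gives $\big|\log\phi^\xi_{R,p,2}(\mathcal C_v(R)) - \log\phi^\xi_{R,p_c,2}(\mathcal C_v(R))\big|\le \frac{1}{p(1-p)}\cdot Cn\cdot\frac{\lambda}{n} = C'\lambda$, and the same for $1-\mathcal C_v$ after passing to the dual crossing (which has the same structure by self-duality of FK-Ising at $p_c$). Combining with the critical bounds $a_-\le\phi^\xi_{R,p_c,2}(\mathcal C_v(R))\le a_+$ yields $c_- := a_- e^{-C'\lambda}\le \phi^\xi_{R,p,2}(\mathcal C_v(R))\le 1-(1-a_+)e^{-C'\lambda} =: c_+$, with $0<c_-\le c_+<1$ depending only on $\lambda$ and $\rho$, which is the claim. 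The main obstacle, as indicated, is the uniform-in-$\xi$, uniform-in-$p$-on-the-window pivotal estimate: one must make sure the bootstrap does not secretly require knowing $L(p)\gtrsim n$ in advance (which would be circular with Theorem \ref{thm:correlation length}), so the argument has to run purely off the critical RSW estimates plus quasi-multiplicativity, using only that $|p-p_c|\le\lambda/n$ to keep the differential inequality's right-hand side integrable. Everything else — Russo's formula for random-cluster, finite energy, and sandwiching boundary conditions — is routine given the results quoted above.
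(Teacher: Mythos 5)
Your plan hinges on the differential inequality
\[
\frac{d}{dp}\log\phi^\xi_{R,p,2}(\mathcal C_v(R)) \;\lesssim\; \sum_{e}\phi^\xi_{R,p,2}(e\text{ pivotal})\,,
\]
together with the a priori bound $\sum_{e}\phi^\xi_{R,p,2}(e\text{ pivotal})\le C n$. This is exactly where the argument fails, and in fact the failure is one of the main points of the paper. For $q>1$, the random-cluster analogue of Russo's formula (Proposition~\ref{russo_influence}) reads
\[
\frac{d}{dp}\phi^\xi_{R,p,q}(A) \;\asymp\; \sum_e I_A^p(e),\qquad
I_A^p(e)=\phi^\xi(A\mid e\text{ open})-\phi^\xi(A\mid e\text{ closed}),
\]
and for $q>1$ the conditional influence $I_A^p(e)$ is \emph{not} comparable to the pivotal probability. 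Writing
\[
I_A^p(e)=\phi^\xi(e\text{ piv}\mid e\text{ open})
+\big[\phi^\xi(\omega_e\in A\mid e\text{ open})-\phi^\xi(\omega_e\in A\mid e\text{ closed})\big],
\]
the bracket is nonnegative by FKG, so $I_A^p(e)\gtrsim \phi^\xi(e\text{ piv})$ by finite energy, but there is no upper bound of the form $I_A^p(e)\lesssim \phi^\xi(e\text{ piv})$: the bracket records long-range positive correlations that the pivotal event does not see. The paper conjectures (Subsection~\ref{ss.influence}) that the influence exponent is $\iota(2)=1$ while $\xi_4(2)=35/24$, so the influence sum $\sum_e I^p_{\mathcal C_v}(e)\approx n^{2-\iota}=n$ dwarfs the pivotal sum $n^2\alpha_4(n)\approx n^{13/24}$. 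Thus even a correct pivotal estimate would fail to control the derivative; if it did, one would conclude the crossing probability changes by $o(1)$ over the window, which is simply false. Phenomenon~\ref{pheno2} and the discrepancy~\eqref{eq:discrepancy} are precisely the observation that pivotal counts no longer govern the near-critical regime; the theorem you are trying to prove is the evidence for this, so one cannot rederive it from the pivotal-counting heuristics that it refutes.

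There is a second, more technical gap: the bound $n^2\alpha_4^{\fk_2}(n)\le Cn$ is not a priori. For $q=1$ the $O(n)$ pivotal estimate ultimately rests on BK-type inequalities and a universal half-plane $3$-arm exponent, neither of which is available for $q>1$; the exponent $\alpha_4^{\fk_2}(n)=n^{-35/24+o(1)}$ is obtained in~\cite{\FKpivotals} via SLE computations, a far heavier input than "quasi-multiplicativity plus RSW at criticality." Your bootstrap ("open and closed set of $p$") would in any case only bound pivotals, not influences, and so would not close.

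The paper's actual proof takes a completely different, non-pivotal route, which is the reason the result is nontrivial. One works with Smirnov's fermionic observable $F_p$ on Dobrushin domains, shows (Proposition~\ref{mass}, equation~\eqref{e.massivebdry}) that it is massive harmonic with mass $\cos 2\alpha(p)$ inside the domain and has an explicit massive-harmonic boundary relation on the free arc, represents $F_p$ via the massive random walk $\mathbb E^x_p[F_p(X_\tau)m_\tau]$~\eqref{bbbbbbbb}, and uses Lemma~\ref{boundary} to convert this into one-point connection probabilities from the free arc to the wired arc (Propositions~\ref{low strip} and~\ref{estimate in H}). The condition $|p-p_c|\le\lambda/n$ enters only to ensure the mass $m_\tau$ stays bounded below on the relevant time scale $\tau\lesssim n^2$, so the off-critical estimates match the critical ones up to constants. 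A second-moment argument (Proposition~\ref{prop:crossings}) then gives crossings in Dobrushin domains, and the bulk of the remaining work (Propositions~\ref{crossing weird}, \ref{rid of BC}, Lemma~\ref{RSW inner}) is to strip away the Dobrushin boundary conditions and obtain uniform bounds for arbitrary $\xi$. None of this passes through a Russo-type differential inequality.
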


The main ingredient of the proof of the latter theorem is Smirnov's fermionic observable. This observable is defined in Dobrushin domains (with a free and a wired boundary arc), and is a key ingredient in the proof of conformal invariance at criticality. Nevertheless, its importance goes far beyond that proof, in particular because it can be related to connectivity properties of the FK-Ising model. We study its properties away from the critical point (developing further the methods of \cite{\HVising}), and estimate its behavior near the free arc of Dobrushin domains. It implies estimates on the probability for sites of the free arc to be connected to the wired arc. This, as in \cite{DHN10}, allows us to perform a second-moment estimate on the number of connections between sites of the free arc and the wired arc, therefore implying crossing probabilities in Dobrushin domains. All that remains is to get rid of the Dobrushin boundary conditions (which is not as simple as one might hope; in particular, harder than in \cite{DHN10}), in order to obtain crossing probabilities with free boundary conditions.

Using conformal invariance techniques, Chelkak, Hongler and Izyurov have recently proved that
\begin{equation}\label{1-arm}
\phi_{p_c,q=2}  \big( 0 \leftrightarrow \partial[-n,n]^2\big) ~ \sim ~ C\, n^{-1/8}\,,
\end{equation}
together with the appropriate (conformally invariant) version for general domains  \cite{\HIC}. Together with Theorem~\ref{thm:correlation length}, this will imply:

\begin{theorem}\label{th.theta}
Assuming \eqref{1-arm}, there exists a constant $c>0$ such that if $p>p_c(2)$,
\[
\phi_{p,2}(0 \leftrightarrow \infty) \ge c\, \left( \frac{|p-p_c|}{\log 1/|p-p_c| }\right)^{1/8} \,.
\]
\end{theorem}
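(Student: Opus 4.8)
\emph{Proof strategy.} The plan is to establish the classical near-critical relation
\[
\theta(p):=\phi_{p,2}(0\leftrightarrow\infty)~\ge~ c_0\,\phi_{p_c,2}\big(0\leftrightarrow\partial[-L,L]^2\big)
\]
for a scale $L$ of the order of the correlation length, and then to feed in \eqref{1-arm} together with the upper bound of Theorem~\ref{thm:correlation length}. Fix a small $\eps>0$, an aspect ratio $\rho$ and a boundary condition $\xi$, and set $L:=L_{\rho,\eps}^{\xi}(p)$. Two inputs about this scale are needed. First, Theorem~\ref{thm:correlation length} gives $L\le C\,|p-p_c|^{-1}\sqrt{\log(1/|p-p_c|)}$ with $C=C(\eps,\rho)$. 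Second --- and this is the one genuinely substantial point --- above the correlation length the model is uniformly supercritical: crossing probabilities of rectangles of aspect ratio $\rho$ at the dyadic scales $2^{k}L$, $k\ge 0$, are at least $1-\eta_k$ with $\sum_{k}\eta_k$ as small as one wishes, uniformly in $p\neq p_c$. For $k=0$ this is essentially the definition of $L_{\rho,\eps}^{\xi}(p)$ in the supercritical case; for $k\ge1$ it follows by the standard renormalization (block) argument that already underlies the proof of Theorem~\ref{thm:correlation length}, bootstrapped from the scale where the crossing probability first exceeds $1-\eps$ --- the point being that below scale $L$ the picture still looks critical (cf.\ Theorem~\ref{RSW offcritical}), while the passage to the supercritical regime above $L$ is fast.

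Granting this, the rest is the usual gluing argument. For $k\ge 0$ let $\mathcal{O}_k$ be a suitable finite combination of increasing crossing events at scale $2^{k}L$ (open circuits around the origin in overlapping dyadic annuli, together with radial crossings linking consecutive circuits), arranged so that each $\mathcal{O}_k$ is built by RSW from a bounded number of rectangle crossings of aspect ratio comparable to $\rho$ at scale $\asymp 2^kL$, and so that
\[
\{0\leftrightarrow\partial[-L,L]^2\}\cap\bigcap_{k\ge0}\mathcal{O}_k~\subseteq~\{0\leftrightarrow\infty\}\,.
\]
By the previous paragraph each $\mathcal{O}_k$ has probability $\ge 1-\eta_k$ with $\sum_k\eta_k$ small, and since $q=2\ge 1$ the FKG inequality applies to these increasing events, so that
\[
\theta(p)~\ge~\phi_{p,2}\big(0\leftrightarrow\partial[-L,L]^2\big)\prod_{k\ge0}\phi_{p,2}(\mathcal{O}_k)~\ge~c_0\,\phi_{p,2}\big(0\leftrightarrow\partial[-L,L]^2\big)
\]
for some $c_0>0$. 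Finally, $\{0\leftrightarrow\partial[-L,L]^2\}$ is increasing and $p>p_c$, so monotonicity of the random-cluster measures in $p$ gives $\phi_{p,2}\big(0\leftrightarrow\partial[-L,L]^2\big)\ge\phi_{p_c,2}\big(0\leftrightarrow\partial[-L,L]^2\big)$, which by (the lower-bound half of) \eqref{1-arm} is at least $c_1 L^{-1/8}$. Inserting the bound $L\le C|p-p_c|^{-1}\sqrt{\log(1/|p-p_c|)}$ yields $\theta(p)\ge c\,(|p-p_c|/\sqrt{\log(1/|p-p_c|)})^{1/8}$ for $p$ close enough to $p_c$; this extends to all $p>p_c$ after decreasing $c$, since $\theta$ is non-decreasing and positive on $(p_c,1]$ while the right-hand side stays bounded there.

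The main obstacle is the uniform supercriticality input: Theorems~\ref{thm:correlation length} and \ref{RSW offcritical} as stated only control scales up to $\asymp 1/|p-p_c|$, which may be much smaller than the correlation length $L(p)$, so one really has to show that once one is past the correlation length the rectangle-crossing probabilities rush to $1$ fast enough --- and with constants independent of $p$ --- for the product $\prod_k\phi_{p,2}(\mathcal{O}_k)$ to remain bounded away from $0$. This is the FK-Ising counterpart of the classical near-critical renormalization picture for percolation, obtained by iterating a sharp-threshold/RSW step; everything else is routine. One should also take care to run the argument in a setting (a fixed boundary condition, or the $\Z^2$ infinite-volume measure) where uniqueness of the infinite cluster and of the infinite-volume measure is available, which is the case throughout the supercritical regime.
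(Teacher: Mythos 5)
Your proposal is correct and follows essentially the same route as the paper: FKG-gluing of crossings at dyadic scales $2^kL(p)$, monotonicity in $p$, the one-arm estimate \eqref{1-arm}, and the upper bound on $L(p)$ from Theorem~\ref{thm:correlation length}. The ``main obstacle'' you flag is precisely the paper's Lemma~\ref{exponential decay}, established by the doubling iteration $u_{2n}\le 25\,u_n^2$ (a comparison-of-boundary-conditions and union-bound argument rather than a sharp-threshold step), which bootstraps the definition of $L(p)$ into the $1-e^{-2^k}$ crossing bounds your gluing argument needs.
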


Such results are of course not completely new. Estimates on the behavior of the correlation length were already available. One obtains \cite{McWu} that at inverse temperature $\beta$ for the Ising model (the corresponding claim for FK-Ising is easy to derive)
$$\lim_{n\rightarrow \infty} -\frac{1}{n} \ln \langle
    \sigma(0)\sigma(n e_1) \rangle = \mathrm{arcsinh}
  \sqrt{(\sinh 2\beta +\sinh ^{-1}
  2\beta)^2-1},$$
  which behaves like $|\beta_c-\beta|^{-1}$ as $\beta\nearrow \beta_c$ (the formula for the correlation length was not presented like this in \cite{McWu} and the reformulation above is extracted from \cite{Mes}). 
The above formula is also proved in \cite{\HVising}, using Smirnov's fermonic observable (and these techniques  will be crucial in our proof, too). Alternative approaches in the physics literature for the correlation length exponent being 1 appeared in \cite{\FerdinandFisher} and \cite{\Kadanoff}.
 
Even though very precise estimates on the correlation length were known, they do not imply directly Theorem~\ref{thm:correlation length}. Indeed, in the aforementioned works, the notion of correlation length is different (it is the inverse of the speed of exponential decay in the disordered regime) and less suitable for the study of the geometric properties of the near-critical regime than the one in our Theorem~\ref{thm:correlation length}. Its equivalence with our notion is {\bf not known rigorously}. 

The critical 1-arm exponent (\ref{1-arm}) and the off-critical result 
$$
c|p-p_c|^{1/8}\le\phi_{p,2} \big( 0 \leftrightarrow \infty \big) \le c^{-1}|p-p_c|^{1/8}
 \qquad (\text{as }p \searrow p_c)
$$
for a positive constant $c>0$ go back to Onsager \cite{\Onsager} and Yang \cite{Yan} and can be derived from the corresponding results about the Ising model. 
Still, we believe our proof of Theorem~\ref{th.theta} to be of some value, since the result of \cite{\HIC} and the techniques in this paper extend to isoradial graphs (with additional work) while Onsager's technology is restricted to the square lattice.

Finally, we should mention that spin correlations have been computed in the near-critical regime. The asymptotics of the two-point correlation function was computed in \cite{McW1,McW2,Tra}. More generally, the asymptotics of $n$-point correlation functions in the full plane were computed in the near-critical regime in \cite{Pal}. 

The proofs of Theorems \ref{thm:correlation length},  \ref{RSW offcritical} and \ref{th.theta} are presented in Section~\ref{sec:proofs}.

\paragraph{The random-cluster model through its phase transition.}

The previous way to look at the near-critical regime may seem slightly artificial. It is more natural to study the random-cluster model through its phase transition by constructing a monotone coupling of random-cluster models with fixed cluster-weight $q\ge1$. Then, properties of the monotone coupling (which can be thought of as a dynamics following the evolution of $p$ between 0 and 1) near $p_c$ will describe the near-critical regime.

In the case of standard bond percolation ($q=1$), such a monotone coupling simply consists of i.i.d.\ Uniform$[0,1]$ labels on the edges, and a percolation configuration $\omega_p$ of density $p$ is the set of bonds with labels at most $p$. The {\bf near-critical window} in percolation was studied by Kesten in
\cite{\KestenScaling}, then by \cite{\BCKS,\NolinKesten, \NolinWerner, \PivotalMeasure, \DPSL}. It turns out that its size is governed by the expected number of {\bf macroscopically pivotal edges} at criticality, {\em i.e.}, edges having four alternating (between dual and primal) open paths starting there and going to a macroscopic distance. Indeed, 
let $\alpha_4(n)$ be the probability at criticality that an edge has four alternating paths going to distance $n$. Getting from $\omega_{p_c}$ to $\omega_{p_c+\Delta p}$ in the box of size $L$, the system is moving out of stationarity, and roughly $L^2 \Delta p$ edges are switched from closed to open (we are assuming $\Delta p >0$). The expected number of opened edges that were closed macroscopic pivotals in the {\em initial} configuration (preventing macroscopic open paths) is about $L^2 \alpha_4(L)  \Delta p$. Now, if $L^2\alpha_4(L) \Delta p \gg 1$, it is not very hard to show that many of these initial macroscopic pivotals have become open with good probability (not only their expected number is large), and this implies that the window of size $L$ has become well-connected. That is, we have left the near-critical regime.

On the other hand, the regime $L^2\alpha_4(L) \Delta p \ll 1$ is more difficult to understand. The number of initial macroscopic pivotals that have switched is small, but maybe many new pivotals have appeared during the dynamics, which could have switched then, establishing macroscopic open connections. To formulate the same issue from a different point of view, if the dynamics, instead of switching always from closed to open, was symmetric dynamical percolation (where each edge is flipping its state according to an independent exponential clock of rate one), then the system would be critical all the time, and, using Fubini and the linearity of expectation, the expected number of {macroscopic pivotal switches} (i.e., flips of edges that are macroscopically pivotal {\em at the moment of the flip}) in time $\Delta p$ would be  $L^2\alpha_4(L) \Delta p$. If this expectation is small, then the probability of having any macroscopic pivotal switches is also small, hence the system indeed has not changed macroscopically. However, the asymmetric near-critical dynamics is slowly moving out of criticality, which could have an effect on the number of pivotals, speeding up changes.

Nevertheless, Kesten proved the following {\bf near-critical stability} result \cite{\KestenScaling}: as long as $L^2 \alpha_4(L)  \Delta p = O(1)$, there are not many more pivotal points in $\omega_p$ than at criticality, hence, despite the monotonicity of the dynamics, changes do not speed up significantly compared to symmetric dynamical percolation, and hence the macroscopic geometry starts changing significantly only when $L^2 \alpha_4(L)  \Delta p$ becomes of order 1. Thus the following scaling relation holds: 
\begin{equation}\label{eq:KestenScaling} L_\ep(p)^2\alpha_4(L_\ep(p))(p-p_c)\asymp 1,\end{equation}
where $\asymp$ means that the quantity remains bounded away from 0 and $\infty$ uniformly in $p$.

The proof in \cite{\KestenScaling} employs Russo's influence formula and differential inequalities.
There is a related but more geometric approach in \cite{\DPSL}, which relies much less on the independence in percolation, hence will be crucial in understanding the case of FK-percolation.
Namely, \cite{\DPSL} proves the following {\bf dynamical stability} result about symmetric dynamical percolation: as long as $L^2\alpha_4(L) \Delta t = O(1)$, in order to describe the macroscopic structure of $\omega_{\Delta t}$, it is enough to know the macroscopic structure of $\omega_0$ and to follow the flips experienced by all initial macroscopic pivotals. In other words, there are no cascades of information from small to much larger scales, i.e., edges initially pivotal only on a ``mesoscopic'' scale are unlikely to have a macroscopic impact in the dynamics within the given time frame. This is proved using induction, with a careful summation over all possible ways in which ``smaller'' pivotals can make a big difference.
Now, the same argument applies to the asymmetric dynamics (the monotone coupling), and gives the near-critical stability that we stated above: in order to change the macroscopic connectivity structure, initial macroscopic pivotals need to be flipped. 

The  main principle we shall extract from this discussion is that in the case of percolation ($q=1$), due to near-critical stability, the near-critical behavior is governed  by the number of pivotal points at criticality. 

To our knowledge, it has been widely believed in the community that basically the same mechanism should hold in the case of random-cluster models. Namely, once we understand the geometry of the set of pivotal points at criticality, we may readily deduce information on the dynamical and near-critical behavior. However, this turns out to be right only for the dynamical behavior, not for the near-critical regime.

Let us consider the case of the FK-Ising. It is shown in \cite{\FKpivotals} that the critical FK-Ising probability $\alpha^{\fk}_4(n)$ for a site to be pivotal behaves like $n^{-35/24+o(1)}$ when $n$ goes to infinity. If pivotal points were governing the near-critical regime, the correlation length should satisfy
\begin{equation}\label{e.wrong}
\big(L_\ep^{\fk}(p)\big)^2\alpha_4(L_\ep^{\fk}(p))|p-p_c|~\asymp~ 1\,,
\end{equation}
which would give
\begin{equation}\label{eq:discrepancy}
L_\ep^{\fk}(p)~=~|p-p_c(2)|^{-\frac{24}{13}+o(1)}~\gg~|p-p_c(2)|^{-1},
\end{equation}
contradicting Theorem~\ref{thm:correlation length}.

In fact, monotone couplings for the random-cluster model with $q>1$ behave differently from the one in percolation. First, there is a basic phenomenon in the $\fk(p,q)$ models for $q\geq 2$ that is very relevant to the above discussion: the difference between the average densities of edges for $p=p_c(q)+\Delta p$ and $p=p_c(q)$ is not proportional to $\Delta p$, but larger than that, with an exponent given by the so-called {\bf specific heat} of the model. (We will discuss this in more detail in Subsection~\ref{s.SH}.) A first guess could be that the discrepancy in \eqref{eq:discrepancy} is a result of the fact that $\Delta p$ is not the density of the new edges arriving, and this should have been taken into account in the computation using the pivotal exponent. However, this is only partially right: the specific heat exponent itself is not large enough to account for this discrepancy (in fact, for $q=2$ it equals 0 --- there is only a logarithmic blow-up). The main reason for the discrepancy is that  a {\bf self-organizational mechanism} kicks in, as follows.

In standard percolation, the monotone coupling is just the asymmetric version of dynamical percolation, hence,
on the way from $\omega_{p_c}$ to $\omega_{p_c+\Delta p}$, new edges arrive in a ``Poissonian'' way.  Similarly to dynamical percolation, there is a natural dynamics with the random-cluster model $\fk(p,q)$ as stationary distribution, called the {\bf heat-bath dynamics} or Sweeny algorithm (see \cite{\FKsokal}, for instance): edges have independent exponential clocks, and when the clock of $e=\langle x,y\rangle$ rings, the state of $e$ is updated according to the $\fk(p,q)$ measure conditioned on the rest of the configuration $\omega$, of which the only relevant information is whether $x$ and $y$ are connected in $\omega\setminus\{e\}$. Now, in \cite{\DFKSL}, the analogue of the above-mentioned dynamical stability result of \cite{\DPSL} is proved for the critical FK-Ising model $\fk(p_c(2),2)$. If there was {\em any} monotone coupling of the near-critical $\fk(p,2)$ models in which new edges arrived one-by-one, in a Markovian way, with clock rates and resampling probabilities bounded away from 0, then the same argument would apply, and near-critical stability would hold, proving (\ref{e.wrong}). Since this prediction is wrong, how can monotone couplings look like? We know of one such coupling, due to Grimmett  \cite{\GrimmettCoupling}, which we will describe in detail in Section~\ref{s.NC}. This coupling is in fact Markovian in $p$, and given any two edges that are closed in the configuration $\omega_p$, their probabilities to be open in $\omega_{p+\Delta p}$ are comparable to each other. There is only one way how the above proof strategy of near-critical stability can break down for this coupling: there must be {\bf atoms in the measure of labels}, i.e., values of $p$ at which not just one edge appears but many, and these edges can ``arrange between each other'' where to arrive without violating Markovianity.
This way, it becomes possible for the arriving edges to prefer ``strategic'' locations, creating and then opening new pivotals at large scales, thereby speeding up the dynamics compared to what could be guessed from the number of pivotals at criticality. In other words, near $p_c$,  the arriving edges depend in a very sensitive way on the current configuration. This balance between the current configuration and the conditional law of the arriving edges is representative of a self-organized mechanism.

\begin{phenomenon}\label{pheno2}
The correlation length in FK-Ising is much smaller than what the intuition coming from standard percolation ($q=1$) would predict. As one raises the parameter $p$, the supercritical regime appears ``faster'' than what would be dictated simply by the number of pivotal edges at criticality: new edges arrive in a very non-uniform manner,
and a {\bf self-organized near-criticality} appears. 
\end{phenomenon}

In Subsection~\ref{ss.coupling}, we introduce Grimmett's monotone coupling of the $\fk(p,q)$ configurations as $p$ varies from 0 to 1 and $q\geq 1$ is fixed. In Subsection~\ref{s.SH}, we explain heuristically why the specific heat effect on the edge intensity is not strong enough to make the correlation length what it is actually, unless there is some self-organized behavior. In Subsection~\ref{ss.clouds1}, we will present a concrete way in which self-organization works in Grimmett's coupling, by proving that edges appear simultaneously in {\bf clouds}. However, most (if not all) of the self-organized scheme remains to be understood. We therefore included a series of open questions about these clouds in Subsection~\ref{ss.clouds2}. 

\paragraph{Influences against pivotal points.} In Section~\ref{ss.influence}, we present another point of view that might explain the discrepancy (\ref{eq:discrepancy}): for $q>1$, Russo's formula used in Kesten's proof has to be modified. Namely, the influence of an edge on the event that a box of size $n$ is crossed does not coincide with the probability for that edge to be pivotal, as it was the case for $q=1$. Let us define the critical exponent $\iota(q)$ by assuming that the above influence of an edge behaves like $n^{-\iota(q)}$ at criticality. Kesten's scaling relation for $q=1$ was $(2-\xi_4(q))\nu(q)=1$, where $\xi_4(q)$ and $\nu(q)$ are the critical exponents of the pivotal event and the correlation length, respectively, coming from \eqref{eq:KestenScaling}. This will remain valid for $q>1$ only if the critical exponent $\xi_4(q)$ is replaced by the exponent $\iota(q)$ governing the behavior of the influence. The fact that $\xi_4(q)\ne \iota(q)$ seems to be new. However, this more analytic way of handling the problem quickly becomes intractable, for instance when trying to prove near-critical stability.

\section{Proofs of Theorems \ref{thm:correlation length},  \ref{RSW offcritical} and \ref{th.theta}}\label{sec:proofs}

\subsection{Basic properties of random-cluster models}\label{ss.basic}

The random-cluster measure can be defined on any graph. However, we 
restrict ourselves to the standard square lattice $\Z^2$. 
With a tiny abuse of notation, we will use $V(\Z^2)$ or just $\Z^2$ for the set of sites, and $E(\Z^2)$ for the set of bonds. In this paper,
$G$ will always denote a connected subgraph of $\Z^2$, \emph{i.e.},
a subset of vertices together with all the bonds
between them. We denote by $\partial G$ the (inner) boundary of $G$,
\emph{i.e.}, the set of sites of $G$ linked by a bond to a site of
$\mathbb{Z}^2\setminus G$.

A \emph{configuration} $\omega$ on $G$ is a random subgraph of $G$,
having the same sites and a subset of its bonds. We will call the bonds
belonging to $\omega$ \emph{open}, the others \emph{closed}. Two sites
$a$ and $b$ are said to be \emph{connected} (denoted by
$a\leftrightarrow b$), if there is an \emph{open path} --- a path
composed of open bonds only --- connecting them. The (maximal) connected
components will be called \emph{clusters}. More generally, we extend
this definition and notation to sets in a straightforward way.

A \emph{boundary condition} $\xi$ is a partition of $\partial G$. We
denote by $\omega \cup \xi$ the graph obtained from the configuration
$\omega$ by identifying (or \emph{wiring}) the vertices in $\xi$ that
belong to the same class of $\xi$. A boundary condition encodes the way
in which sites are connected outside of $G$. Alternatively, one can see
it as a collection of \emph{abstract bonds} connecting the vertices in
each of the classes to each other. We still denote by $\omega \cup \xi$
the graph obtained by adding the new bonds in $\xi$ to the configuration
$\omega$, since this will not lead to confusion.  Let $o(\omega)$ (resp.
$c(\omega)$) denote the number of open (resp.\ closed) bonds of $\omega$
and $k(\omega,\xi)$ the number of connected components of
$\omega\cup\xi$. The probability measure $\phi^{\xi}_{G,p,q}$ of the
random-cluster model on a \emph{finite} subgraph $G$ with parameters
$p\in[0,1]$ and $q\in(0,\infty)$ and boundary conditions $\xi$ is defined
by
\begin{equation}
  \label{probconf}
  \phi_{G,p,q}^{\xi} (\left\{\omega\right\}) := \frac
  {p^{o(\omega)}(1-p)^{c(\omega)}q^{k(\omega,\xi)}} {Z_{G,p,q}^{\xi}},
\end{equation}
for any subgraph $\omega$ of $G$, where $Z_{G,p,q}^{\xi}$ is a
normalizing constant known as the \emph{partition function}. When there
is no possible confusion, we will drop the reference to parameters in
the notation.

\paragraph{The domain Markov property.}

One can encode, using an appropriate boundary condition $\xi$, the
influence of the configuration outside a sub-graph on the measure within
it.  Consider a graph $G=(V,E)$ and a random-cluster measure
$\phi^{\psi}_{G,p,q}$ on it.  For $F\subset E$, consider $G'$ with $F$
as the set of edges and the endpoints of it as the set of sites. Then,
the restriction to $G'$ of \smash{$\phi^{\psi}_{G,p,q}$} conditioned to
match some configuration $\omega$ outside $G'$ is exactly
\smash{$\phi_{G,p,q'}^{\xi}$}, where $\xi$ describes the connections
inherited from $\omega\cup \psi$ (two sites are wired if they are
connected by a path in $\omega\cup\psi$ outside $G'$ --- see Lemma 4.13 in
\cite{\GrimmettFK}). This property is the direct analog of the DLR
conditions for spin systems.

\paragraph{Comparison of boundary conditions when $q\geq1$.}

An event is called \emph{increasing} if it is preserved by addition of
open edges. When $q\geq 1$, the model satisfies the FKG-inequality, or is \emph{positively associated} 
(see Lemma 4.14 in \cite{\GrimmettFK}), which has the following consequence:
for any boundary conditions $\psi\leq \xi$ (meaning that $\psi$ is finer
than $\xi$, or in other words, that there are fewer connections in
$\psi$ than in $\xi$), we have
\begin{equation}
  \label{comparison_between_boundary_conditions}
  \phi^{\psi}_{G,p,q}(A)\leq \phi^{\xi}_{G,p,q}(A)
\end{equation}
for any increasing event $A$. This last property, combined with the
domain Markov property, provides a powerful tool to study the decorrelation between events.

\paragraph{Examples of boundary conditions: free, wired, Dobrushin.}

Three boundary conditions play a special role in the study of
random-cluster models:
\begin{itemize}
\item The \emph{wired} boundary conditions, denoted by
$\phi_{G,p,q}^1$, is specified by the fact that all the vertices on the
boundary are pairwise connected.
\item The \emph{free} boundary conditions,
denoted by \smash{$\phi_{G,p,q}^0$}, is specified by the absence of wirings between boundary sites. 
\end{itemize}
These boundary conditions are extremal for stochastic
ordering, since any boundary condition is smaller (resp.\ greater) than
the wired (resp.\ free) boundary conditions.
\begin{itemize}
\item The {\em Dobrushin} boundary conditions: Assume now that $\partial G$ is a
self-avoiding polygon in $\mathbb L$, let $a$ and $b$ be two sites
of $\partial G$. The triple $(G,a,b)$ is called a \emph{Dobrushin
  domain}. Orienting its boundary counterclockwise defines two oriented
boundary arcs $\partial_{ab}$ and $\partial_{ba}$; the Dobrushin boundary conditions are
defined to be free on $\partial_{ab}$ (there are no wirings between boundary
sites) and wired on $\partial_{ba}$ (all the boundary sites are pairwise
connected). These arcs are referred to as the \emph{free arc} and the
\emph{wired arc}, respectively. The measure associated to these boundary
conditions will be denoted by $\phi_{G,p,q}^{a,b}$ or simply
$\phi_{G}^{a,b}$.
\end{itemize}

\paragraph{Infinite-volume measures and the definition of the critical 
point.}

The domain Markov property and comparison between boundary conditions
allow us to define infinite-volume measures. Indeed, one can consider a
sequence of measures on boxes of increasing sizes with free boundary
conditions. This sequence is increasing in the sense of stochastic
domination, which implies that it converges weakly to a limiting
measure, called the random-cluster measure on $\Z^2$ with free
boundary condition (and denoted by $\phi_{p,q}^0$). This classic construction
can be performed with many other sequences of measures, defining several
\emph{a priori} different infinite-volume measures on $\Z^2$. For
instance, one can define the random-cluster measure $\phi_{p,q}^1$ with wired
boundary condition, by considering the decreasing sequence of
random-cluster measures on finite boxes with wired boundary condition.

On $\Z^d$, for a given $q\geq 1$, it is known that uniqueness of the infinite-volume measure can fail only for $p$ 
in a countable set $\mathcal{D}_q$, see Theorem 4.60 of 
\cite{\GrimmettFK}.  Since all limit measures are sandwiched between $\phi_{p,q}^0$ and $\phi_{p,q}^1$ w.r.t.~stochastic domination, the countability of $\mathcal{D}_q$ implies that there exists a \emph{critical point} $p_c$ 
such that for {\em any} infinite-volume measure with $p<p_c$ (resp.\ $p>p_c$), 
there is almost surely no infinite component of connected sites (resp.\ 
at least one infinite component).

\paragraph{Planar duality.} In two dimensions, a random-cluster measure on a subgraph $G$ of $\mathbb{Z}^2$ with free boundary conditions can be associated with a dual measure in a natural way. First define the \emph{dual lattice} $(\mathbb{Z}^2)^*$, obtained by putting a vertex at the center of each face of $\mathbb{Z}^2$, and by putting edges between nearest neighbors. The \emph{dual graph} $G^*$ of a finite graph $G$ is given by the sites of $(\mathbb{Z}^2)^*$ associated with the faces adjacent to an edge of $G$. The edges of $G^*$ are the edges of $(\mathbb{Z}^2)^*$ that connect two of its sites -- note that any edge of $G^*$ corresponds to an edge of $G$.

A dual model can be constructed on the dual graph as follows: for a percolation configuration $\omega$, each edge of $G^*$ is \emph{dual-open} (or simply open), resp. \emph{dual-closed}, if the corresponding edge of $G$ is closed, resp. open. If the primal model is a random-cluster model with parameters $(p,q)$, then it follows from Euler's formula (relating the number of vertices, edges, faces, and components of a planar graph) that the dual model is again a random-cluster model, with parameters ($p^*,q^*)$ -- in general, one must be careful about the boundary conditions. For instance, on a graph $G$, the random-cluster measure $\phi_{G,p,q}^{0}$ is dual to the measure $\phi_{G^*,p^*,q^*}^{1}$, where $(p^*,q^*)$ satisfies
\begin{equation*}
\frac{p p^*}{(1-p) (1-p^*)} = q \quad\text{and}\quad q^*=q.
\end{equation*}
Similarly, the dual of Dobrushin boundary conditions are Dobrushin boundary conditions themselves. 

The critical point $p_c(q)$ of the model is the self-dual point $p_{\textrm{sd}}(q)$ for which $p = p^*$
(this has been recently proved in \cite{\HVselfdual}), whose value can be derived:
\begin{equation*}
p_{\textrm{sd}}(q)=\frac{\sqrt{q}}{1+\sqrt{q}}.
\end{equation*}

In the following, we need to consider connections in the dual model. Two sites $x$ and $y$ of $G^*$ are said to be \emph{dual-connected} if there exists a connected path of open dual-edges between them. Similarly to the primal model, we define \emph{dual clusters} as maximal connected components for dual-connectivity.

\paragraph{FK-Ising model: crossing probabilities at criticality.}
For the value $q=2$ of the parameter, the random-cluster model is related to the Ising model. In this case, the random-cluster model is now well-understood. The uniqueness of the infinite volume limit for all $p$ is known since Onsager; see \cite{\WWIsing} for a short and elegant proof (or \cite[Proposition 3.10]{\HugoStasBuzios} for a version of Werner's proof in English). The value $p_c= p_{\mathrm{sd}}$ is implied by the computation by Kaufman and Onsager of the partition function of the Ising model, and an alternative proof has been proposed recently by Beffara and Duminil-Copin \cite{\HVising}. Moreover, in \cite{\SmirnovFKI}, Smirnov proved conformal invariance of this model at the self-dual point $p_{\mathrm{sd}}$. As mentioned in the Introduction, criticality can sometimes be characterized by the fact that crossing probabilities are bounded uniformly away from 0 and 1. This fact was proved in \cite{DHN10} (our result is an extension of this one away from criticality):

\begin{theorem}[RSW-type crossing bounds, \cite{DHN10}]\label{RSW critical}
Let $0 < \rho_1 < \rho_2$. There exist two constants $0 <c_- \leq c_+<1$ (depending only on $\rho_1$ and $\rho_2$) such that for any rectangle $R$ with side lengths $n$ and $m \in [ \rho_1 n, \rho_2 n]$ (\emph{i.e.} with aspect ratio bounded away from $0$ and $\infty$ by $\rho_1$ and $\rho_2$), one has
$$c_- \leq \phi_{R,p_c,2}^{\xi}(\mathcal C_v(R))\leq c_+$$
for \emph{any} boundary conditions $\xi$.
\end{theorem}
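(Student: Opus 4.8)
The plan is to follow \cite{DHN10}: reduce the theorem to a single crossing estimate with \emph{free} boundary conditions, establish that estimate via Smirnov's fermionic observable and a second-moment argument in a Dobrushin domain, and then invoke the classical RSW machinery. First I would reduce the problem. Since $\mathcal C_v(R)$ is increasing, the comparison between boundary conditions gives $\phi^0_{R,p_c,2}(\mathcal C_v(R))\le\phi^\xi_{R,p_c,2}(\mathcal C_v(R))\le\phi^1_{R,p_c,2}(\mathcal C_v(R))$ for every $\xi$, so it suffices to bound $\phi^0$ from below and $\phi^1$ from above. At the self-dual point $p_c$, planar duality turns the complement of a vertical crossing with wired boundary conditions into a dual horizontal crossing of the (essentially congruent) dual rectangle with free boundary conditions, the dual model being again critical FK-Ising; a horizontal crossing of that rectangle is a vertical crossing of a rectangle of reciprocal aspect ratio. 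Thus both required bounds follow from a single family of lower bounds $\phi^0_{R,p_c,2}(\mathcal C_v(R))\ge c(\rho)>0$ for rectangles of arbitrary fixed aspect ratio $\rho$. Finally, the standard RSW gluing argument --- combining crossings of overlapping rectangles and of squares by the FKG inequality, exactly as in percolation --- upgrades such a bound for one aspect ratio to all aspect ratios. So the core of the theorem is: for free boundary conditions, the crossing probability of a square is bounded away from $0$.

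To get this I would work in a Dobrushin domain $(\Omega,a,b)$ of size $\asymp n$ --- concretely a rectangle whose free arc is a segment in the middle of one side and whose wired arc is the rest of the boundary, with $a,b$ the endpoints of the free arc --- and estimate $X$, the number of vertices of the free arc that are connected to the wired arc. Smirnov's observable $F$ is s-holomorphic and has prescribed phases along the two arcs, and, combined with the FKG inequality, $|F|$ near the free arc is comparable to the connection probability $\phi^{a,b}_\Omega(x\leftrightarrow\text{wired arc})$; solving the associated discrete boundary value problem shows this probability is of order $n^{-1/2}$ for $x$ in the bulk of the free arc (the half-plane one-arm decay), whence $\E^{a,b}_\Omega[X]\asymp n^{1/2}$. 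For the second moment, conditioning on the cluster of $x$ and using the domain Markov property together with the comparison between boundary conditions bounds $\phi^{a,b}_\Omega(x,y\leftrightarrow\text{wired arc})$ by products of one-arm quantities that the observable controls again; summing over pairs on the free arc gives $\E^{a,b}_\Omega[X^2]\asymp n$. By the Paley--Zygmund (Cauchy--Schwarz) inequality, $\phi^{a,b}_\Omega(X>0)\ge(\E^{a,b}_\Omega[X])^2/\E^{a,b}_\Omega[X^2]\ge c>0$, and for a suitably shaped $\Omega$ the event $\{X>0\}$ forces a crossing of a rectangle of bounded aspect ratio.

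The remaining step --- passing from this Dobrushin-domain estimate to a genuinely free-boundary crossing estimate --- is where I expect the real difficulty: the Dobrushin condition still carries the wiring of the wired arc, and here the FKG comparison runs the wrong way, since the free condition is extremal, so nothing ``more free'' is available to reduce to. I would extract the free-boundary estimate by combining the domain Markov property, repeated comparison of boundary conditions in nested domains, and self-duality at $p_c$ (which converts the lower bound on ``free arc connected to wired arc'' into an upper bound on the analogous dual connection, hence into the presence of a separating dual crossing). Once a free-boundary crossing estimate for one aspect ratio is in hand, the reduction of the first paragraph closes the argument; everything else is either the classical RSW boosting or a direct consequence of the s-holomorphicity of $F$ established in \cite{\SmirnovFKI}.
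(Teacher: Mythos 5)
The theorem in question is one the paper itself does not prove---it is quoted verbatim from~\cite{DHN10}, and Section~2 only summarizes the argument and then extends it off-criticality. So the fair comparison is between your proposal, the summary the authors give of the DHN10 proof, and the paper's own proof of the off-critical analogue (Theorem~\ref{RSW offcritical}). Against that standard, your plan has the right skeleton: fermionic observable $\Rightarrow$ free-arc-to-wired-arc connection probabilities of order $n^{-1/2}$ $\Rightarrow$ second-moment/Paley--Zygmund for the number of connections $\Rightarrow$ crossings in Dobrushin domains $\Rightarrow$ removal of Dobrushin boundary conditions $\Rightarrow$ free-boundary crossings, after first reducing to $\phi^0$ lower bounds via monotonicity and self-duality. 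That reduction and the second-moment argument are exactly what the paper credits to \cite{DHN10} and what it reproduces off-critically in Propositions~\ref{low strip}--\ref{prop:crossings}.

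Two places in your write-up are, however, weaker than the actual argument. First, you close by asserting that ``the standard RSW gluing argument \ldots exactly as in percolation'' upgrades a \emph{square} crossing estimate to all aspect ratios. For the random-cluster model this is precisely the step one cannot take for free: the Seymour--Welsh/Harris lowest-crossing-and-reflect trick relies on resampling the region above the lowest crossing with the original (product) measure, and with $q>1$ the induced boundary conditions on that region are not controllable in the required direction. This obstruction is the very reason the observable-based approach was needed in \cite{DHN10}. What the paper (and DHN10) actually do is avoid it: the strip and half-plane Dobrushin estimates (Propositions~\ref{crossing weird} and~\ref{rid of BC}, with the large constant $M$) already give crossings of rectangles of aspect ratio $\ge 2$ directly, and then a tailored gluing using half-plane crossings (Lemma~\ref{RSW inner}) bootstraps to all aspect ratios. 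If you want to keep your reduction, you must either obtain the Dobrushin estimate directly for a wide rectangle, or justify the RSW boosting in the FK setting separately---neither is ``exactly as in percolation.'' Second, the removal of the Dobrushin wiring is stated only as a list of tools; the paper's version of this step (Proposition~\ref{rid of BC}) is a specific argument: condition on a separating dual horizontal crossing in the top half (which has uniformly positive conditional probability by the critical RSW itself), then use the domain Markov property to dominate the conditional law in the bottom half by the free measure. You identified the right ingredients (duality, domain Markov, comparison of boundary conditions) but did not assemble them into a working argument; this is the step the paper explicitly flags as ``not as simple as one might hope.'' Finally, a small caveat: your Dobrushin domain (a short free segment in the middle of one side) needs the hedge you gave---with that geometry $\{X>0\}$ gives only a large cluster touching the free arc, not by itself a crossing of a rectangle in the desired direction; the paper's version uses a strip (free bottom, wired top), for which $\{X>0\}$ is literally a vertical crossing.
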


\textbf{In the rest of this section, we consider only random-cluster models on the
  two-dimensional square lattice with parameter $q=2$}, hence we drop the
dependency on $q$ in the notation. In this case, the model is called FK-Ising model. In addition, a point will be identified with its complex coordinate.


\subsection{Connectivity probabilities and the fermionic observable}
\label{sec:definition}

The random-cluster with cluster-weight $q=2$ is a model with long-range dependence. In particular, boundary conditions play a crucial role in connectivity probabilities. While general percolation arguments are sometimes sufficient to estimate connectivity probabilities in the bulk \cite{\HVselfdual}, there are very few possibilities to control probabilities in the presence of boundary conditions. We thus need a new argument to control these crossing probabilities. 

When $q=2$, Smirnov's fermionic observable provides us with a powerful tool to study such probabilities. In the next paragraph, we introduce the loop representation of the random-cluster model and we define Smirnov's observable. In the next one, we remind several properties of this observable at and away from the critical point. We list them without proof, since they are already presented in various places.

\paragraph{The medial lattice and the loop representation.}

Let $G$ be a finite subgraph of $\Z^2$ together with Dobrushin boundary condition given by the boundary points $a$ and $b$. 
Let $G^*$ be the dual graph, with the natural definition that respects the boundary condition, see the left side of Figure~\ref{fig:medial_lattice}. Declare \emph{black} the sites of $G$ and \emph{white} the sites of $G^*$. Replace every site with a colored
diamond, as in the right side of Figure~\ref{fig:medial_lattice}. The \emph{medial graph}
$G_{\diamond}=(V_{\diamond},E_{\diamond})$ is defined as follows: $E_\diamond$ is the set of
diamond sides which belong to both a black and a white diamond;
$V_\diamond$ is the set of all the endpoints of the edges in
$E_\diamond$. We obtain a subgraph of a rotated (and rescaled) version
of the usual square lattice. We give $G_\diamond$ an additional
structure as an oriented graph by orienting its edges clockwise around
white faces.

\begin{figure}[ht]
  \begin{center}
    \includegraphics[width=.6\hsize]{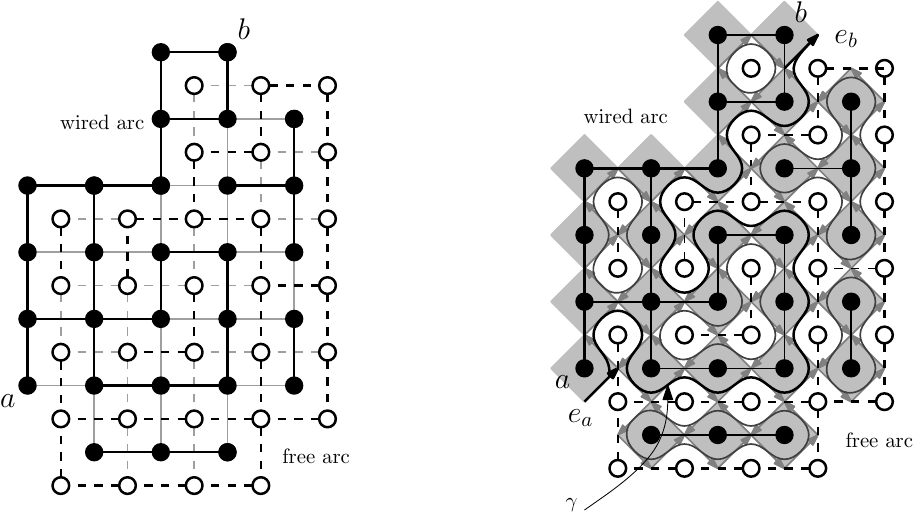}
  \end{center}
  \caption{\textbf{Left:} A graph $G$ with Dobrushin boundary conditions, and its dual $G^*$. The black
    (resp.\ white) sites are the sites of $G$ (resp. $G^*$). The open
    bonds of $G$ (resp. $G^*$) are represented by solid (resp.\ dashed)
    black bonds. \textbf{Right:} Construction of the medial lattice and
    the loop representation: the loops are interfaces between primal and
    dual clusters.}
  \label{fig:medial_lattice}
\end{figure}

The random-cluster measure on $(G,a,b)$ with Dobrushin boundary conditions has a 
rather convenient representation in this setting. Consider a 
configuration $\omega$. It defines clusters in $G$ and dual clusters in 
$G^\star$. Through every vertex of the medial graph passes either an open 
bond of $G$ or a dual open bond of $G^\star$, hence there is a unique way to 
draw Eulerian (\emph{i.e.}, using every edge exactly once) loops on the 
medial lattice --- \emph{interfaces}, separating clusters from dual 
clusters. Namely, a loop arriving at a vertex of the medial lattice 
always makes a $\pi/2$ turn so as not to cross the open or dual open 
bond through this vertex, see Figure~\ref{fig:medial_lattice}. Besides 
loops, the configuration contains a single curve joining the vertices 
adjacent to $a$ and $b$, which are the only vertices in $V_{\diamond}$ 
with three adjacent edges.  This curve is called the 
\emph{exploration path} and is denoted by $\gamma$. It corresponds 
to the interface between the cluster connected to the wired arc and the 
dual cluster connected to the free arc.

The first and last edge of $\gamma$ are denoted by $e_a$ and $e_b$, respectively. 
More generally, $e_c$ denotes the medial edge pointing north-east and bordering the diamond associated to $c$. 

This construction gives a bijection between random-cluster configurations on $G$ and
Eulerian loop configurations on $G_{\diamond}$. The probability measure
can be nicely rewritten (using Euler's formula) in terms of the loop
picture: $$\phi_{G}^0 ({\omega}) = \frac {x(p)^{\# \; \text{open
      bonds}} \sqrt{2}^{\# \; \text{loops}}} {\tilde{Z}(p,G)}, \quad
\text{where} \quad x(p) := \frac p {(1-p) \sqrt{2}}$$ and
$\tilde{Z}(p,G)$ is a normalizing constant. Notice that $p=p_c$ if
and only if $x(p)=1$.  This bijection is called the \emph{loop
  representation} of the random-cluster model. The orientation of the
medial graph gives a natural orientation to the interfaces in the loop
representation.

\paragraph{The fermionic observable.}

Fix a Dobrushin domain $(G,a,b)$. Following~\cite{\SmirnovFKI}, we now
define an observable $F$ on the edges of its medial graph, \emph{i.e.}\
a function $F : E_{\diamond} \to \mathbb{C}$. Roughly speaking, $F$ is a
modification of the probability that the exploration path passes through
a given edge.

First, the \emph{winding}
$\text{W}_{\Gamma}(z,z')$ of a curve $\Gamma$ between two edges $z$ and
$z'$ of the medial graph is the total rotation (in radians and oriented
counter-clockwise) that the curve makes from the mid-point of edge $z$
to that of edge $z'$.  We define the
observable $F=F_p$ for any edge $e\in E_{\diamond}$ as
\begin{equation}
  \label{defF}
  F(e) := \phi_{p,2,G}^{a,b} \left({\rm e}^{\frac{\rm i}{2}
  \text{W}_{\gamma}(e,e_b)} \mathbbm{1}_{e\in \gamma}\right),
\end{equation}
where $\gamma$ is the exploration interface from $a$ to $b$.

\paragraph{Relation with connectivity probabilities.} 

As was mentioned earlier, the fermionic observable is related to connectivity properties of the model via the following fact:
\begin{lemma}[Equation (14) in \cite{\SmirnovFKI}, Lemma 2.2 in \cite{\HVising}]
  \label{boundary}
  Let $u\in G$ be a site next to the free arc, and $e$ be a side of the black
  diamond associated to $u$ which borders a white diamond of the boundary. Then,
  \begin{equation}
    |F(e)|=\phi_{p,2,G}^{a,b}(u\leftrightarrow \text{wired arc}).
  \end{equation}
\end{lemma}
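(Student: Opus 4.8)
The plan is to split the identity of Lemma~\ref{boundary} into two deterministic facts about the loop representation and then to remove all randomness by factoring a constant phase out of the expectation in \eqref{defF}. Write $W_e$ for the total winding $\mathrm{W}_\gamma(e,e_b)$ \emph{on the event $\{e\in\gamma\}$}. I claim: (i) as events depending on the configuration $\omega$, one has $\{e\in\gamma\}=\{u\leftrightarrow\text{wired arc}\}$; and (ii) on this event, $W_e$ is non-random, depending only on $e$ and the Dobrushin domain $(G,a,b)$. Granting (i) and (ii), formula \eqref{defF} immediately gives $F(e)=\phi_{p,2,G}^{a,b}\big(\mathrm{e}^{\frac{\mathrm{i}}{2}\mathrm{W}_\gamma(e,e_b)}\,\mathbbm{1}_{e\in\gamma}\big)=\mathrm{e}^{\frac{\mathrm{i}}{2}W_e}\,\phi_{p,2,G}^{a,b}\big(e\in\gamma\big)=\mathrm{e}^{\frac{\mathrm{i}}{2}W_e}\,\phi_{p,2,G}^{a,b}\big(u\leftrightarrow\text{wired arc}\big)$, and taking moduli proves the lemma.

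For (i): each medial vertex lies on exactly one interface of the loop representation --- a loop or the exploration path $\gamma$ --- which passes through it exactly once, so $\gamma$ is a self-avoiding path on $G_{\diamond}$, and a medial edge is traversed by $\gamma$ precisely when it separates the cluster of the wired arc from the dual cluster of the free arc. By hypothesis $e$ borders, on its white side, a boundary diamond of $G^*$ lying on the free arc; by definition of the Dobrushin boundary condition that dual vertex belongs to the dual cluster of the free arc. Hence $e$ is an interface edge lying on $\gamma$ (rather than on a loop) if and only if the black side of $e$, namely the vertex $u$, belongs to the cluster of the wired arc; and if $u$ is not connected to the wired arc then $e$ sits inside a dual cluster or on a loop, so $e\notin\gamma$. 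This is exactly the asserted identity of events.

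For (ii): the clockwise orientation of $G_{\diamond}$ around its white faces orients $\gamma$ and dictates the direction in which $\gamma$ traverses the boundary edge $e$ and the terminal edge $e_b$; in particular the unit tangents of $\gamma$ at $e$ and at $e_b$ are deterministic. On $\{e\in\gamma\}$ consider the sub-arc $\gamma|_{[e,e_b]}$: it is a simple arc in the simply connected region $\overline{G_{\diamond}}$ whose two endpoints touch $\partial G_{\diamond}$, so together with the boundary arc of $\partial G_{\diamond}$ from $e$ to $e_b$ lying on the side free of the complementary sub-arc $\gamma|_{[e_a,e]}$ it forms a Jordan curve. The total turning of a Jordan curve equals $\pm 2\pi$ with sign fixed by orientation; in this turning relation the contributions of that boundary arc and of the two exterior angles at $e$ and $e_b$ are all deterministic, whence $\mathrm{W}_\gamma(e,e_b)=W_e$ is deterministic, as claimed.

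The only delicate point is (ii): one has to verify that $\gamma|_{[e,e_b]}$ is genuinely simple and that the relevant boundary arc is unambiguously determined, i.e.\ carry out carefully the bookkeeping of orientations and of which side of $\gamma$ carries the various boundary pieces --- this is precisely where self-avoidance of $\gamma$ and simple connectivity of the Dobrushin domain enter. The event identification (i) and the final algebra are routine. Both facts are classical properties of Smirnov's observable; see \cite{\SmirnovFKI} and Lemma~2.2 of \cite{\HVising}.
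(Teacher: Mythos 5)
Your proof is correct and is precisely the standard argument from the cited references (Equation~(14) in Smirnov's paper and Lemma~2.2 in Beffara--Duminil-Copin): the paper itself does not reprove this lemma but simply invokes it, and your two-step reduction --- identifying $\{e\in\gamma\}$ with $\{u\leftrightarrow\text{wired arc}\}$ via the interface characterization, then observing that the winding $\mathrm{W}_\gamma(e,e_b)$ is deterministic on this event because $e$ and $e_b$ are both boundary medial edges of a simply connected domain --- is exactly how those references proceed. Your Jordan-curve/total-turning justification of the deterministic winding is a slightly more elaborate phrasing of the usual ``homotopy to a boundary arc'' remark, but it reaches the same conclusion and is sound.
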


\paragraph{Integrability relations of the fermionic observable.}

A lot of information has been gathered on the fermionic observable during the last few years. In particular, it satisfies local relations that allow to determine its scaling limit.

\begin{proposition}[Lemma 2.3 of \cite{\HVising}]
  \label{integrability,relation_around_a_vertex}
  Consider a medial vertex $v$ in $G^\diamond\setminus \partial G^\diamond$. We index the two edges pointing towards $v$ by $A$ and $C$, and the two others by $B$ and $D$, such that  the alphabetical order is clockwise oriented. Then,  
  \begin{equation}
    \label{rel_vertex}
    F(A)-F(C) ~=~\e^{i\alpha}~i[F(B)-F(D)].
  \end{equation}
 where
\begin{equation}\label{eq:alphafromp}
  {\rm e}^{{\rm i}\alpha}:=  \frac{{\rm e}^{{\rm i}\pi/4} +x}{{\rm
      e}^{{\rm i}\pi/4}x+ 1}.
\end{equation} 
\end{proposition}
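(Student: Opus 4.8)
The plan is to prove \eqref{rel_vertex} by a purely local argument on configurations, through the loop representation introduced above; this is essentially Smirnov's computation in \cite{\SmirnovFKI} at the self-dual point, extended to general $x=x(p)$ as in \cite{\HVising}. First I would rewrite the four numbers $F(A),F(B),F(C),F(D)$ via the loop representation: each is a sum over those configurations $\omega$ whose exploration path $\gamma$ uses the corresponding medial edge, each $\omega$ being weighted by a constant times $x(p)^{o(\omega)}\sqrt2^{\,\ell(\omega)}$ (with $\ell(\omega)$ the number of loops) and by the winding phase $e^{\frac{i}{2}W_\gamma(\cdot\,,e_b)}$ measured from that edge.

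The key geometric observation is that the state (open or closed) of the edge $e$ of $G$ whose midpoint is the medial vertex $v$ completely determines how the two strands of the loop configuration turn at $v$: it pairs the four half-edges at $v$ either as $\{A,B\},\{C,D\}$ or as $\{A,D\},\{C,B\}$, and in each case a strand passing through $v$ makes a turn of exactly $+\pi/2$ or $-\pi/2$, hence contributes a phase $e^{\pm i\pi/4}$ to the relevant terms of $F$. Fixing the clockwise labelling $A,B,C,D$ (with $A,C$ incoming) pins down these signs, so that, grouping configurations according to the state of $e$ and to how $\gamma$ uses the (at most two) strands through $v$, one reduces \eqref{rel_vertex} to a single linear identity between two partial sums, one over configurations with $e$ open and one over configurations with $e$ closed.

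The heart of the matter is then an involution: flip the state of $e$. This is a weight-non-preserving involution pairing the $e$-open configurations with the $e$-closed ones; it changes $o(\omega)$ by $\pm1$ and, whenever $\gamma$ visits $v$, changes $\ell(\omega)$ by exactly $\pm1$ (either $\gamma$ absorbs one of the two strands at $v$, which was a loop, or it expels a strand, creating a loop), the signs being correlated with the rewiring at $v$. Pairing $\omega$ with $\omega^{(v)}$, the loop-representation weights of the two members differ by a factor built from $x(p)$ and from $\sqrt2^{\pm1}$; combining this factor with the geometric phases $e^{\pm i\pi/4}$ of the turns produces precisely the coefficient $e^{i\alpha}=\dfrac{e^{i\pi/4}+x}{e^{i\pi/4}x+1}$ of \eqref{eq:alphafromp}, and summing the contribution of each pair $\{\omega,\omega^{(v)}\}$ to $F(A)-F(C)-e^{i\alpha}\,i\,(F(B)-F(D))$ yields $0$. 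At the self-dual point $x=1$ one has $\alpha=0$ and this is exactly Smirnov's preholomorphicity relation; configurations for which $\gamma$ avoids $v$ contribute nothing to any of the four terms and may be discarded.

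The main obstacle is the bookkeeping: one has to run a small case analysis --- whether $\gamma$ traverses $v$ once or twice, which of the two strands it uses, and whether flipping $e$ merges two loops, splits one, absorbs a loop into $\gamma$, or expels a loop from $\gamma$ --- and to check that in \emph{every} case the winding phases and the weight ratios combine into the same factor $e^{i\alpha}$, so that the pairwise cancellation is exact. Getting the orientation conventions right (interfaces oriented clockwise around white faces, and the ``alphabetical order clockwise'' convention for $A,B,C,D$) is what makes the particular combination $F(A)-F(C)$ versus $F(B)-F(D)$, rather than some other pairing of the four edges, come out correctly. Since this is precisely Lemma 2.3 of \cite{\HVising} (itself the off-critical extension of Smirnov's identity), in practice I would simply refer there for the complete verification.
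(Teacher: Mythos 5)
The paper gives no proof of this proposition at all --- it is cited verbatim from Lemma~2.3 of \cite{\HVising}, and the text just before it explicitly says these properties are ``listed without proof, since they are already presented in various places.'' Your configuration-pairing sketch (flipping the state of the edge $e$ of $G$ through $v$ as an involution on the relevant configurations, tracking the $\pm1$ changes in $o(\omega)$ and $\ell(\omega)$ together with the $\e^{\pm i\pi/4}$ winding phases, and verifying that they combine in every case into the coefficient $\e^{i\alpha}$) accurately reflects how the identity is actually proved in \cite{\HVising} and in Smirnov's original critical-case computation, so deferring to that reference for the detailed case analysis is consistent with what the paper itself does.
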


Moreover, the complex argument modulo $\pi$ of $F$ at any edge $e$ is determined by the direction of $e$. More precisely, if $e$ points in the same direction as $e_b$, then $F$ is real. Similarly, $F$ belongs to $\e^{-i\pi/4}\mathbb R$ (resp. $i\mathbb R$, $\e^{i\pi/4}\mathbb R$) if $e$ makes an angle of $\pi/2$ (resp. $\pi$, $3\pi/2$) with the edge $e_b$, see Fig.~\ref{fig:relation}. Knowing the complex argument modulo $\pi$, together with (\ref{rel_vertex}), allow one to express the value of the observable at one edge $e$ in terms of the values at two edges incident to one of the endpoints of $e$. This important fact was used extensively in \cite{\SmirnovFKI} and in any following work since. For instance, it implies that $F$ is determined by the relations \eqref{rel_vertex} and the fact that $F(e_b)=1$ and $F(e_a)=\e^{iW_{\Gamma}(e_a,e_b)/2}$, where $\Gamma$ is any path from $e_a$ to $e_b$ staying in $G^\diamond$. In addition, these relations have a very special form. In particular, it has been proved for $x=1$ in \cite{\SmirnovFKI} and then extended in \cite{\HVising} (Lemma~4.4) to general values of $x$ that $F$ is massive harmonic inside the domain:

\begin{proposition}\label{mass}
Let $p\in(0,1)$ and $X$ with four neighbors in $G\setminus \partial G$, we have 
\begin{equation}
\Delta_pF(e_X)~=~0,
\end{equation}
where the operator $\Delta_p$ is defined by
\begin{equation}
\Delta_pg(e_X)~:=~\frac{\cos [2\alpha]}{4}\left(\sum_{Y\sim X} g(e_Y)\right)~-~g(e_X).
\end{equation}
\end{proposition}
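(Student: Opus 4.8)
The plan is to prove the identity purely locally, using only the edge relation \eqref{rel_vertex} of Proposition~\ref{integrability,relation_around_a_vertex} together with the rigidity of the complex argument of $F$ recalled right after it; in particular nothing beyond the machinery of \cite{\HVising} is used, and $p$ is allowed to be any point of $(0,1)$. At criticality ($x=1$, $\alpha=0$) this is Smirnov's discrete harmonicity of the observable \cite{\SmirnovFKI}, and the point is to carry the computation through the extra phase $\mathrm e^{\mathrm i\alpha}$.

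First I would fix the combinatorial picture around $X$. Since the four neighbours $Y\sim X$ lie in $G\setminus\partial G$, the black diamond of $X$, the black diamonds of the four $Y$'s, and all the medial vertices bordering them lie in $G^\diamond\setminus\partial G^\diamond$, so \eqref{rel_vertex} may be invoked at each of those medial vertices. The north-east pointing edges $e_X$ and $e_{Y}$, $Y\sim X$, are specific sides of these diamonds, and in this bounded patch any two of them are joined by a short chain of medial edges; writing \eqref{rel_vertex} at every medial vertex of the patch produces a finite linear system whose unknowns are $F(e_X)$, the four $F(e_Y)$, and the values of $F$ on the bounded collection of remaining (``auxiliary'') medial edges.

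The second step is to collapse this complex linear system to a real one. By the argument rigidity, on any medial edge $F$ equals a fixed unit complex number --- one of $1,\mathrm e^{-\mathrm i\pi/4},\mathrm i,\mathrm e^{\mathrm i\pi/4}$ according to the direction of the edge relative to $e_b$ --- times a real scalar; substituting this and separating real and imaginary parts turns each instance of \eqref{rel_vertex} into two real equations, which in suitable real coordinates express a planar rotation by $\alpha+\pi/4$ relating the pair of outgoing edge-values at that medial vertex to the pair of incoming ones. One then eliminates the auxiliary scalars. Because $X$ and its four neighbours enter symmetrically, each $F(e_Y)$ survives in the reduced relation with the same coefficient; that coefficient is produced by the $\mathrm e^{\mathrm i\alpha}$ factors carried along the two relations traversed in passing from $e_X$ to $e_Y$, and the elimination identifies it as $\cos(2\alpha)$, while the $1/4$ comes from the four-fold symmetry. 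This is exactly $\Delta_pF(e_X)=0$. A final, purely computational step uses \eqref{eq:alphafromp} to rewrite $\cos(2\alpha)$ in terms of $x=x(p)$, confirming that the operator obtained is the announced $\Delta_p$ and that it degenerates to the ordinary discrete Laplacian at $x=1$.

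The main obstacle is the middle step: keeping track of the orientation of the medial lattice (which of the four edges at each medial vertex point inward, and in which cyclic order the labels $A,B,C,D$ of \eqref{rel_vertex} occur around it) and then carrying out the elimination so that the constant emerges as exactly $\cos(2\alpha)/4$ and not some superficially different expression. Conceptually this is only trigonometry and a small linear system, but it is where all the care lies. An \emph{equivalent} route, which is the one Smirnov uses at criticality and which one may prefer, is to first establish --- by summing \eqref{rel_vertex} around the white diamonds --- an intermediate ``massive Cauchy--Riemann'' (divergence-free) identity for $F$, and then obtain the massive Laplacian as the composition of the two resulting discrete first-order operators; this only repackages the same bookkeeping.
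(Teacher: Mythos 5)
The paper does not actually prove Proposition~\ref{mass}; it cites Lemma~4.4 of \cite{\HVising} (and, for $x=1$, \cite{\SmirnovFKI}). Your plan is a fair outline of how that cited lemma is proved: at each medial vertex in the patch around $X$, invoke \eqref{rel_vertex}, use the known complex argument of $F$ on each edge to project the complex relation down to two real scalar equations, and eliminate the auxiliary edge values. You correctly identify where all the content lies --- the orientation and labelling bookkeeping, and the elimination --- but the sketch stops precisely there: that the elimination produces the coefficient $\cos(2\alpha)/4$ \emph{is} the substance of the lemma, and in your proposal it is only asserted, not derived. The appeal to ``four-fold symmetry'' to justify equal coefficients in front of the four $F(e_Y)$ is also looser than it looks: all of $e_X$ and the $e_Y$ point north-east, so a quarter-turn of the lattice about $X$ permutes the $Y$'s but does not fix $e_X$, and there is no obvious a priori symmetry forcing the four coefficients to coincide --- their equality has to come out of the computation. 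Two minor points: the final step of rewriting $\cos 2\alpha$ via \eqref{eq:alphafromp} is not needed, since $\Delta_p$ is already stated in terms of $\alpha$; and while your alternative route through a massive discrete Cauchy--Riemann relation is a valid way to organize the same bookkeeping, note that the discrete primitive $H$ of $\Im(F^2)$ that Smirnov uses at criticality is no longer available for $x\neq 1$, a point the paper itself makes immediately after this proposition, so that route does not let you bypass the local computation.
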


\begin{figure}[ht]
  \begin{center}
    \includegraphics[width=.5\hsize]{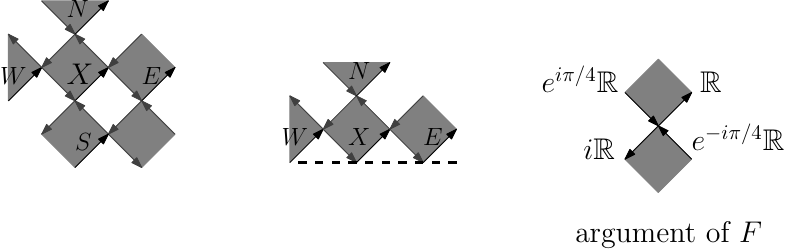}
  \end{center}
  \caption{\textbf{Left:} An edge inside the domain: it has four edges oriented the same way at distance two. \textbf{Right:} An edge on the free arc with the associated indexation.}
  \label{fig:relation}
\end{figure}
Observe that $\alpha(p)=0$ if and only if $p=p_c$. In this case, the observable is discrete harmonic inside the domain. As mentioned before, this is one of the main ingredients of Smirnov's proof of conformal invariance: when properly rescaled, the observable converges to a harmonic map. Boundary conditions for $F$ correspond to discretizations of the Riemann-Hilbert problem. These boundary conditions are quite complicated to study at a discrete level, and Smirnov used a discrete primitive $H$ of (the imaginary part of) $F^2$ to handle them. The function $H$ was then solving a discretized Dirichlet problem. 

In particular, the use of $H$ made the estimation of $F$ on the free arc possible. More precisely, $F$ was related to the square root of modified harmonic measures (see Proposition~3.2 of \cite{DHN10}). This fact was crucial in the proof of Theorem~\ref{RSW critical} \cite{DHN10}. Without entering into details, let us say that in Dobrushin ``domains'' $(G,a,b)$, the probability at criticality for a site $x$ on the free arc to be connected to the wired arc (which is the modulus of the observable, thanks to Lemma~\ref{boundary}) is on the order of the square-root of the harmonic measure of the wired arc seen from $x$. Equivalently, for a dual site $u$ on the wired arc, the probability of being dual-connected to the free arc is of order of the square-root of the harmonic measure of the free arc seen from $u$. We refer to \cite{DHN10} for additional details on these facts.

We will be using this fact for two nice infinite Dobrushin domains:
\begin{itemize}
\item The infinite strip $S_n=\mathbb Z\times[0,n]$ of height $n$. Denote by $\phi_{S_n,p}^{-\infty,\infty}$ the random-cluster measure with parameter $p$, free boundary conditions on the bottom and wired boundary conditions on the top. The probability at criticality for a dual-site on the top to be dual-connected to the free arc is of order $1/\sqrt n$ (since the harmonic measure of the free arc is $1/n$, via the Gambler's ruin).
\item The upper half-plane $\mathbb H$. Denote by $\phi_{\mathbb H,p}^{0,\infty}$ the random-cluster measure with parameter $p$, free boundary conditions on $\Z_+=\{0,1,\cdots\}$ and wired boundary conditions on $\Z_-=\{\cdots,-2,-1,0\}$. The probability at criticality for the dual site adjacent to $-n$ to be dual-connected to the free arc is of order $1/\sqrt n$ for the same reason as for the strip.
\end{itemize}

When $p\le p_c$, the fermionic observable $F_p$ can be defined in these two domains even though they are infinite (see \cite{\HVising}). In the strip, $\gamma$ goes from $-\infty$ to $\infty$, while in $\H$, it goes from $0$ to $\infty$.
One should be careful about the definition of the winding since $e_b$ does not make sense: the winding is fixed to be equal to 0 on edges of the free arc pointing north-east. Since the observable in infinite volume is the limit of observables in finite volume, it still satisfies the properties of the previous section.

\begin{figure}[ht]
  \begin{center}
    \includegraphics[width=.3\hsize]{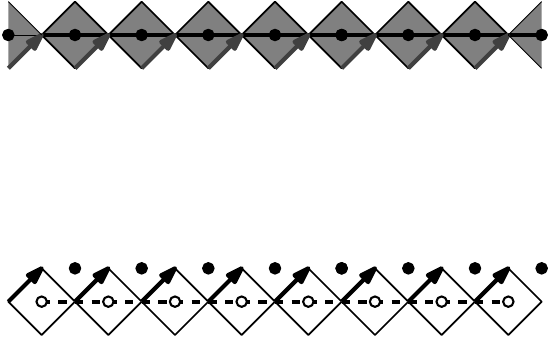}\quad\quad\includegraphics[width=.5\hsize]{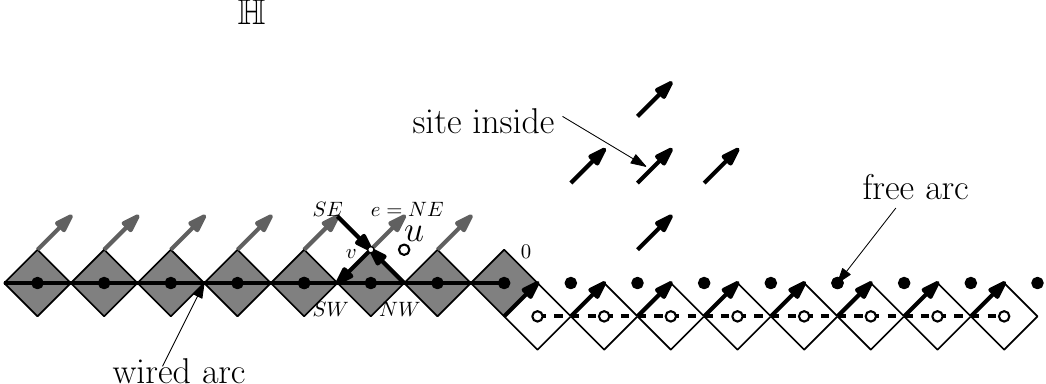}
  \end{center}
  \caption{\textbf{Left:} The strip. \textbf{Right:} The upper half-plane. In this case, $\tau$ is the hitting time of grey edges.}
  \label{fig:domains}
\end{figure}


\subsection{Integrability relations of the fermionic observable away from criticality}

Away from the critical point, the primitive $H$ is not available anymore. Nevertheless, $F_p$ is massive harmonic inside the domain. In fact, $F_p$ satisfies very explicit relations on the free arc of the domain, as well. (On the wired arc, such relations are not available.) Indeed, Case 3 of Lemma~4.4 of \cite{\HVising} says that
\small
\begin{align}
\Delta_p F_p(e_X) &= \frac{\cos 2\alpha}{2(1+\cos (\pi/4-\alpha))}[F_p(e_W)+F_p(e_N)]~+~\frac{\cos (\pi/4+\alpha)}{1+\cos (\pi/4-\alpha)}F_p(e_E)~-~F_p(e_X)\nonumber\\
&=~0\label{e.massivebdry}
\end{align}
\normalsize
for $X$ on the free arc (excluding 0 in the case of the upper half-plane). When $p=p_c$, the sum of the coefficients on the right equals 0, which means that the observable has an interpretation in terms of reflected random walks. This relates to the discretization of the Riemann-Hilbert boundary problem, and it  provides an alternative strategy to handle the scaling limit of the observable. 

Away from criticality, we can also interpret these relations in terms of a random process. Define the Markov process with
  generator $\Delta_p$, which one can interpret as the random walk of a massive particle. We write this process $(X_n^{(p)},m_n^{(p)})$ where $X_n^{(p)}$ is a
  random walk with jump probabilities defined in terms of $\Delta_p$ ---
  the proportionality between jump probabilities is the same as the
  proportionality between coefficients --- and $m_n^{(p)}$ is the mass
  associated to this random walk.  The law of the random walk starting
  at an edge $x$ is denoted $\mathbb{P}^x_p$.  In order to simplify the notation, we drop the dependency in $p$ in $(X^{(p)}_n,m^{(p)}_n)$ and simply write $(X_n,m_n)$. Note that the mass of the walk
  decays by a factor $\cos 2\alpha$ at each step inside the domain, and by some other factor on the free arc.
  
Define $\tau$ to be the hitting time of the wired arc, more precisely, of the set $\partial$ of medial edges pointing north-east and having one end-point on the wired arc (the grey edges in Fig.~\ref{fig:domains}). The fact that $\Delta_pF_p=0$ for every edge $x\notin\partial$ implies for any $t\geq 0$ that
\begin{equation}\label{aaaaaaaa}F_p(x)= \mathbb E^x_p[F_p(X_{t\wedge \tau})m_{t\wedge\tau}].\end{equation}
Since $m_\tau \le 1$, $F_p(X_{t\wedge \tau})m_{t\wedge \tau}$ is uniformly integrable and \eqref{aaaaaaaa} can be improved to
  \begin{equation}\label{bbbbbbbb}F_p(x)=\mathbb E^x_p[F_p(X_\tau)m_\tau].\end{equation}
 This will be the principal tool in our study.

\begin{proposition}\label{low strip}Let $\lambda>0$. There exists $C_1=C_1(\lambda)$ such that for every $n>0$ and $p_c\ge p>p_c-\frac{\lambda}{n}$,
\begin{equation}\label{low}\phi_{S_n,p}^{-\infty,\infty}(0\leftrightarrow in+\Z)~\ge~\frac{C_1}{\sqrt n}.\end{equation}
There exists $C_2=C_2>0$ such that for every $n>0$ and $p<p_c-\frac{C_2 \log n}{n}$,
\begin{equation}\label{upp}\phi_{S_n,p}^{-\infty,\infty}(0\leftrightarrow in+\Z)~\leq~\frac{C_2}{n^4}.
\end{equation}
\end{proposition}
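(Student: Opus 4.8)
The plan is to route both estimates through Smirnov's fermionic observable on the strip $S_n$, using Lemma~\ref{boundary}: the quantity to control is exactly $|F_p(e_0)|=\phi_{S_n,p}^{-\infty,\infty}(0\leftrightarrow in+\Z)$, with $0$ on the free (bottom) arc and $in+\Z$ the wired arc. The engine is the massive-harmonic representation \eqref{bbbbbbbb}, $F_p(e_0)=\mathbb{E}^{e_0}_p\big[F_p(X_\tau)\,m_\tau\big]$, where $(X_n,m_n)$ is the massive walk with generator $\Delta_p$, reflected on the free arc through \eqref{e.massivebdry} and absorbed on the set $\partial$ of wired-arc edges at the hitting time $\tau$, together with the a priori bound $|F_p|\le 1$ and the $p_c$-case from \cite{DHN10}.

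For the lower bound \eqref{low} I would first use that $\phi_{S_n,p}^{-\infty,\infty}(0\leftrightarrow in+\Z)$ is nondecreasing in $p$ (stochastic monotonicity in $p$ at fixed $q$), so it suffices to treat $p=p_c-\lambda/n$. For this value $\alpha(p)=O(\lambda/n)$, hence $\cos 2\alpha=1-O(\lambda^2/n^2)$ and the free-arc factor is $1-O(\lambda/n)$; since the walk reaches $\partial$ within $O(n^2)$ steps while visiting the free arc $O(n)$ times, the accumulated mass $m_\tau$ stays bounded below by some $c(\lambda)$ with probability bounded below. One then transfers the critical estimate of \cite{DHN10} — the harmonic measure of the free arc seen from the top of $S_n$ is $\asymp 1/n$ by the Gambler's ruin, so $|F_{p_c}(e_0)|\asymp 1/\sqrt n$ — the $O(\lambda/n)$ perturbations of all coefficients being uniform over the scale $n$, which gives $|F_p(e_0)|\ge C_1(\lambda)/\sqrt n$. (A variant keeps the first moment from the observable and runs the second-moment argument of \cite{DHN10} on $[-N,N]\times[0,n]$, then lets $N\to\infty$.)

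The upper bound \eqref{upp} is the heart of the matter. Here $p<p_c$, so $\cos 2\alpha=1-\Theta(|p-p_c|^2)$ and the free-arc factor is $1-\Theta(|p-p_c|)$, and the point is that the mass is genuinely dissipated before the walk can cross the full height $n$ to reach $\partial$. Writing $|F_p(e_0)|\le \mathbb{E}^{e_0}_p[m_\tau]$ and splitting over the hitting time $\tau$ of level $n$ — using $\tau\asymp n^2$ on the bulk of the probability, where $m_\tau\le(\cos 2\alpha)^{\Theta(n^2)}=\e^{-\Theta(|p-p_c|^2n^2)}$, and the Gaussian bound $\mathbb{P}^{e_0}_p(\tau\le s)\le \e^{-cn^2/s}$ for the costly fast trajectories — and carrying along the $\sqrt{\cdot}$ coming from the harmonic-measure structure exactly as at criticality, I would aim at a bound of the shape $|F_p(e_0)|\le C\,n^{O(1)}\,\e^{-c|p-p_c|^2n^2}$, which is $\le C_2/n^4$ as soon as $|p-p_c|\,n\ge C_2\sqrt{\log n}$.

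The step I expect to be the genuine obstacle is controlling the atypical fast excursions of the massive walk: the naive combination of $|F_p|\le 1$ on $\partial$ with $\mathbb{E}^{e_0}_p[m_\tau]$ is too lossy, since trajectories reaching $\partial$ in $\asymp n/|p-p_c|$ steps still carry mass of order $\e^{-c|p-p_c|\,n}$, which would only yield the weaker threshold $|p-p_c|\,n\gtrsim\log n$. The improvement must come from the rigid structure of $F_p$ itself — the prescribed complex arguments of $F_p$ along $\partial$ and along the free arc (Figure~\ref{fig:relation}) together with the massive-harmonic relations: by horizontal translation invariance of the strip the observable reduces to a one-dimensional massive recurrence $j\mapsto F_p$ on row $j$, with \eqref{e.massivebdry} as the boundary relation on the free arc, and the whole difficulty is to solve this recurrence precisely enough — in particular to pin down $F_p$ near the wired arc — so as to upgrade the bound from the correlation-length rate $\e^{-c|p-p_c|\,n}$ to the Gaussian rate $\e^{-c|p-p_c|^2n^2}$, i.e.\ to get $\sqrt{\log n}$ rather than $\log n$ in the statement.
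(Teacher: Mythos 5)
Your plan correctly identifies the main tools --- Lemma~\ref{boundary}, the massive-harmonic representation \eqref{bbbbbbbb}, and the comparison of the mass decay on the bulk and on the free arc --- but for the lower bound you miss the key algebraic shortcut the paper uses, and for the upper bound you identify a genuine difficulty that your sketch does not resolve.

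For \eqref{low}, your ``perturb the critical coefficients'' sketch does not actually control $F_p(X_\tau)$: the walk is absorbed on the \emph{wired} arc, where Lemma~\ref{boundary} says nothing, so ``uniform $O(\lambda/n)$ perturbations'' by themselves don't pin down the boundary value. The paper's route sidesteps this entirely via the Kramers--Wannier self-duality of the strip: the observable at the edge $e_{in}$ on the wired arc of $(S_n,p)$ equals $F_{p^*}(e_0)$, the observable at the bottom for the dual parameter $p^*\ge p_c$. Hence $F_p(e_0)=\phi_{S_n,p^*}^{-\infty,\infty}(0\leftrightarrow in+\Z)\cdot\mathbb{E}^0_p[m_\tau]$; the first factor is $\gtrsim 1/\sqrt n$ by monotonicity in $p$ and the critical estimate, and $\mathbb{E}^0_p[m_\tau]\ge C(\lambda)$ follows from $1-\cos 2\alpha = O(\lambda^2/n^2)$ plus the upper tail of $\tau/n^2$. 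This identity $F_p(X_\tau)=F_{p^*}(e_0)$ is the idea you are missing; without it, the lower bound does not close.

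For \eqref{upp}, your computation is on target: balancing the small-$\tau$ tail $\mathbb{P}^{e_0}(\tau\le t)\lesssim e^{-cn^2/t}$ against the mass $(\cos2\alpha)^t$ gives $\mathbb{E}^{e_0}_p[m_\tau]\asymp e^{-n\sqrt{2(1-\cos 2\alpha)}}$, which with $1-\cos2\alpha\asymp(\log n)/n^2$ is $e^{-c\sqrt{\log n}}$, not polynomially small. The paper's proof of \eqref{upp} consists of bounding $F_p(X_\tau)\le 1$ and asserting $\mathbb{E}^0_p\big[(1-c_2(\log n)/n^2)^\tau\big]\approx n^{-c'}$; the latter equals the \emph{typical} value $(\cos2\alpha)^{n^2}$, not the expectation, which is dominated by fast excursions exactly as you describe. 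So your diagnosis --- that the naive mass/probability trade-off only reaches the threshold $|p-p_c|\,n\gtrsim\log n$, and that delivering $\sqrt{\log n}$ requires a genuinely finer use of the structure of $F_p$ (e.g.\ solving the one-dimensional massive recurrence on the strip explicitly, or exploiting cancellations on the wired boundary rather than $|F_p|\le 1$) --- is correct, and is an issue that the paper's own two-line argument does not visibly address. In short: your lower bound has a hole that the duality trick fills; your upper-bound critique is sound and applies equally to the step as written in the paper.
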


\begin{proof}
In both cases, we study the probability for a point on the free arc to be connected to the wired arc. In particular, Lemma~\ref{boundary} implies that the quantities on the left of \eqref{low} and \eqref{upp} are equal to $|F(e_0)|$ (or $F(e_0)$ in this case, since the winding is fixed on the boundary). Moreover, \eqref{bbbbbbbb} allows us to write
\begin{align*}\phi_{S_n,p}^{-\infty,\infty}(0\leftrightarrow in+\Z)~&=~F(e_0)~=~\mathbb E^0_p[F_p(X_\tau)m_\tau]\end{align*}

Let us first deal with~\eqref{low}. Note that $F_p(X_\tau)=F_p(in)=F_{p^*}(e_0)$, where $p^*$ is the dual parameter. Hence
\begin{align*}\phi_{S_n,p}^{-\infty,\infty}(0\leftrightarrow in+\Z)~&=~\mathbb E^0_p[ F_{p^*}(e_0)\, m_\tau]\\
&=~\phi_{S_n,p^*}^{-\infty,\infty}(0 \leftrightarrow in+\Z)\,\mathbb E^0_p[m_\tau]\,,
\end{align*}
using Lemma~\ref{boundary} again. Since $p>p_c-\frac{\lambda}{n}$, $\cos 2\alpha=m_1^{(p)}$ is larger than  $1-c(\lambda/n)^2$ for some $c>0$. Using the upper tail of $\tau/n^2$, we deduce that 
$$\mathbb E^x[m_\tau]\ge C$$
for some  $C=C(\lambda)$. In addition to this, 
\begin{equation*}\phi_{S_n,p^*}^{-\infty,\infty}(0 \leftrightarrow in+\Z)~\ge~\frac C{\sqrt n},
\end{equation*}
where we used the standard estimate of the probability at criticality, together with $p^*>p_c$. The two inequalities together yield \eqref{low}.

Let us now turn to \eqref{upp}. When $p<p_c-C \log n/n$, we use the expansion of $\alpha$ near $p_c$ and $\cos 2\alpha\le 1-c (\log n)^2 /n^2$ (for some constant $c=c(C)$) to deduce using standard large deviations arguments about random walks:
\begin{equation*}\phi_{S_n,p}^{-\infty,\infty}(0\leftrightarrow in+\Z)~\le~\mathbb E^0_p[m_\tau]~\le~\mathbb E^0_p\big[\big(1-c_2(\log n)^2/n^2\big)^\tau\big]\leq C_2 \,n^{-4}\end{equation*}
for some well-chosen constant $C_2=C_2(C)$. \QED
\end{proof}

The previous proof of \eqref{low} was based on a comparison with the estimates at criticality: when $p>p_c-\lambda/n$ the connection probabilities are of the same order as the critical ones. We push this reasoning further in the following proposition.

\begin{proposition}\label{estimate in H}For any $\lambda>0$, there exists $C_3=C_3(\lambda)>0$ such that for every $n>0$ and $p>p_c-\frac{\lambda}{n}$,
\begin{equation}
\phi_{\mathbb H,p}^{0,\infty}(n\leftrightarrow \Z_-)~\ge~\frac{C_3}{\sqrt n}.
\end{equation}
\end{proposition}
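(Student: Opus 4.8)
The plan is to push the argument behind the lower bound \eqref{low} one step further. By stochastic monotonicity in $p$ of the probability of the increasing event $\{n\leftrightarrow\Z_-\}$, it suffices to treat $p\in(p_c-\lambda/n,p_c]$, so that $\alpha=\alpha(p)=O(1/n)$. As in the strip, Lemma~\ref{boundary} (together with the fact that the winding is fixed to $0$ on north-east medial edges of the free arc) gives $\phi_{\H,p}^{0,\infty}(n\leftrightarrow \Z_-)=F_p(e_n)$, and \eqref{bbbbbbbb} gives $F_p(e_n)=\mathbb E^{e_n}_p[F_p(X_\tau)\,m_\tau]$, where $\tau$ is the hitting time of the set $\partial$ of north-east medial edges incident to the wired arc $\Z_-$. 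The new difficulty, compared with the strip, is that $\H$ is not invariant under horizontal translations, so $F_p$ is not constant on $\partial$ and one must understand how it varies.

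The key step is therefore to control $F_p$ on $\partial$. Since every edge of $\partial$ points north-east, $F_p$ is real on $\partial$. Moreover, exactly as in the strip case --- planar duality between the random-cluster model on $G$ at $p$ and on $G^*$ at $p^*$, combined with Lemma~\ref{boundary} applied to the dual model, whose free arc is the wired arc $\Z_-$ of the primal one --- the value of $F_p$ at the edge of $\partial$ adjacent to $-k$ equals $\phi_{\H,p^*}^{0,\infty}(k\leftrightarrow\Z_-)\geq 0$. Since $p\le p_c$ we have $p^*\ge p_c$, so stochastic monotonicity and the critical estimate recalled in Subsection~\ref{sec:definition} (the critical connection probability of a free-arc site at distance $k$ from the corner to the wired arc being of order $1/\sqrt{k+1}$) show that this value is at least $c/\sqrt{k+1}$. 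Writing $-K$ for the position on $\Z_-$ at which $X$ is absorbed, we obtain
\[
\phi_{\H,p}^{0,\infty}(n\leftrightarrow\Z_-)~=~\mathbb E^{e_n}_p\bigl[F_p(X_\tau)\,m_\tau\bigr]~\geq~c\,\mathbb E^{e_n}_p\!\left[\frac{m_\tau}{\sqrt{K+1}}\right].
\]

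To conclude, I would restrict the last expectation to the event $E$ that $X$ reaches $\partial$ at distance at most $Ln$ from the corner, within at most $An^2$ steps, having visited the free arc at most $Bn$ times, for constants $A,L,B$ chosen below. On $E$ one has $1/\sqrt{K+1}\geq c'/\sqrt n$, and the mass is bounded below, $m_\tau\geq (1-c_0/n^2)^{An^2}(1-c_0/n)^{Bn}\geq c''>0$, using that the per-step mass factor is $1-O(1/n^2)$ inside the domain (there it equals $\cos 2\alpha$) and $1-O(1/n)$ on the free arc (from the coefficients of \eqref{e.massivebdry}, which sum to $1$ at $p_c$), together with $\alpha=O(1/n)$. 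Finally, since $\alpha=O(1/n)$ for $p$ in our range, the process $(X_k)$ is, on the natural space scale $n$ and time scale $n^2$, a diffusion with uniformly bounded drift, reflected on the free arc and absorbed on $\partial$; standard estimates for such walks (the walk started at distance $n$ from the corner reaching $\partial$ near the corner, quickly, and without too many boundary visits) give $\mathbb P^{e_n}_p(E)\geq c'''>0$ once $A,L,B$ are large enough. Combining the three bounds yields $\phi_{\H,p}^{0,\infty}(n\leftrightarrow\Z_-)\geq C_3/\sqrt n$.

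The step I expect to be the main obstacle is the control of $F_p$ on the wired arc: Lemma~\ref{boundary} is available only on the free arc, so on the wired arc one is forced through planar duality, and one genuinely needs the phase of $F_p$ on $\partial$ to be $0$ rather than $\pi$ --- otherwise the contributions in $\mathbb E^{e_n}_p[F_p(X_\tau)\,m_\tau]$ might cancel instead of adding up. This is why it matters both that $\partial$ consists of north-east edges (on which $F_p$ is real) and that the dual interpretation forces $F_p$ to be non-negative there. The remaining ingredients --- the reduction to $p\le p_c$, the bookkeeping for $m_\tau$, and the quantitative estimates for the near-critical reflected random walk --- are routine and parallel the proof of Proposition~\ref{low strip}.
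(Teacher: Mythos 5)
Your strategy is the same as the paper's: use massive harmonicity, $F_p(e_n)=\mathbb E_p^{e_n}[F_p(X_\tau)m_\tau]$, lower-bound $F_p$ on the wired arc $\partial$ by a quantity of order $1/\sqrt{k}$, and finish with a random-walk estimate that confines $X_\tau$ to distance $O(n)$ with $m_\tau$ bounded below. The one genuine difference is in how you control $F_p$ on $\partial$. You correctly flag this as the crux, but you assert an exact duality identity $F_p(e_{-k})=\phi_{\H,p^*}^{0,\infty}(k\leftrightarrow\Z_-)$ "exactly as in the strip case". The paper instead proves this as a separate Lemma~\ref{extension boundary}, and only up to constants: $F_p(e_u)\asymp\phi_{\H,p}^{0,\infty}(u\stackrel{\star}{\leftrightarrow}\Z_+)$, obtained by a local application of relation \eqref{rel_vertex} at a single medial vertex and projecting onto the known phase direction. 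The strip's duality shortcut leaned on horizontal translation invariance (so only one wired-arc edge matters, and a reflection $z\mapsto\bar z+in$ of the strip onto itself does the rest); in $\H$ you need $F_p$ at \emph{every} wired-arc edge, and the reflection $x\mapsto-x$ composed with duality maps $\H$ to the shifted $\H^*$ with the opposite Dobrushin arcs and a different winding normalization, so the claimed \emph{exact} equality is not obviously right (indeed, at $p_c$ the paper's sandwich has constants $\sqrt2-1$ and $\sqrt2$, not $1$). This is why the paper routes through Lemma~\ref{extension boundary} rather than global duality. That said, all that is needed is positivity and a lower bound of order $1/\sqrt k$, which both routes deliver; the rest of your argument (reduction to $p\le p_c$, the bookkeeping on $m_\tau$ and the number of free-arc visits, and the reflected-walk estimate) matches the paper's Proposition~\ref{estimate in H} closely.
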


Let us first prove an easy yet slightly technical result. It should be compared to Lemma~\ref{boundary}.

\begin{lemma}\label{extension boundary}
Let $u$ be a dual vertex adjacent to the wired arc of $\mathbb H$, 
$$F_p(e_u)\asymp\phi_{\mathbb H,p}^{0,\infty}(u\stackrel{\star}{\leftrightarrow} \Z_+),$$
where $e_u$ is the edge pointing north-east and adjacent to $u$, and $\asymp$ means that the ratio is uniformly bounded away from 0 and $\infty$.
\end{lemma}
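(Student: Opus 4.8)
The plan is to establish Lemma~\ref{extension boundary} by relating $F_p(e_u)$ to the connectivity probability $\phi_{\mathbb H,p}^{0,\infty}(u\stackrel{\star}{\leftrightarrow}\Z_+)$ directly at the level of the loop representation, just as in the proof of Lemma~\ref{boundary} (Equation (14) in \cite{\SmirnovFKI}), but now for a dual vertex on the \emph{wired} arc rather than a primal vertex on the free arc. Recall that $F_p(e)$ is a sum over configurations in which the exploration path $\gamma$ passes through $e$, weighted by the phase ${\rm e}^{\frac{\rm i}{2}W_\gamma(e,e_b)}$. For a primal site on the free arc, the winding of $\gamma$ through the boundary edge $e$ is deterministic given the event $\{e\in\gamma\}$, so the phase factors out and one gets exactly the probability. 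For a dual vertex $u$ adjacent to the wired arc, the edge $e_u$ pointing north-east is again a ``boundary-type'' edge, but the winding of $\gamma$ as it reaches $e_u$ need not be completely deterministic — it can differ by multiples of $2\pi$ depending on how $\gamma$ wraps around, which is exactly why we only get $\asymp$ and not equality.

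Concretely, first I would observe that $\{e_u\in\gamma\}$ is essentially the event $\{u\stackrel{\star}{\leftrightarrow}\Z_+\}$: the exploration path separates the wired cluster from the dual cluster of the free arc, so $\gamma$ passes through a medial edge adjacent to the dual vertex $u$ on the wired arc precisely when $u$ lies on the boundary between these two clusters, which (up to the usual boundary bookkeeping, and possibly summing over the finitely many medial edges around $u$) is the event that $u$ is dual-connected to the free arc $\Z_+$. Then I would write
\[
F_p(e_u)=\phi_{\mathbb H,p}^{0,\infty}\!\left({\rm e}^{\frac{\rm i}{2}W_\gamma(e_u,e_b)}\mathbbm{1}_{e_u\in\gamma}\right),
\]
and argue that on the event $\{e_u\in\gamma\}$ the winding $W_\gamma(e_u,e_b)$ takes values in a fixed residue class modulo $2\pi$ determined by the directions of $e_u$ and $e_b$ (this is the ``complex argument modulo $\pi$'' statement recalled after Proposition~\ref{integrability,relation_around_a_vertex}, applied to $F^2$, giving the argument of $F(e_u)$ modulo $\pi$). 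Hence all the summands have arguments lying in a half-line direction $\zeta\mathbb R_{\ge 0}$ for a fixed unit complex number $\zeta$ — or more carefully, in $\zeta\mathbb R$ — so that $|F_p(e_u)|$ and $\phi_{\mathbb H,p}^{0,\infty}(u\stackrel{\star}{\leftrightarrow}\Z_+)$ differ only through the possible sign cancellations coming from the residual $2\pi$-ambiguity of the winding.

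To turn this into the two-sided bound $\asymp$, I would use the known complex phase of $F$ to pin down that the argument of $F_p(e_u)$ modulo $\pi$ is fixed, and then show the modulus is comparable to the total mass of the same configurations. The key point is that the phases ${\rm e}^{\frac{\rm i}{2}W_\gamma(e_u,e_b)}$ all lie on a common line through the origin, so $|F_p(e_u)| = \big|\sum (\pm) (\text{positive weights})\big|$; to get a genuine lower bound of the right order one must rule out near-total cancellation. This can be done, as in \cite{DHN10} and \cite{\HVising}, by noting that $F$ has a definite sign (real part or imaginary part of fixed sign) along the wired arc, which follows from its interpretation via the massive harmonic / reflected-random-walk representation \eqref{bbbbbbbb}: since $F_p(e_b)=1>0$ and the transition weights of the massive walk and the boundary reflection are nonnegative, $F_p(x)=\mathbb E^x_p[F_p(X_\tau)m_\tau]$ propagates positivity, so $F_p(e_u)$ has the same sign as the boundary values and therefore $|F_p(e_u)|\ge c\,\phi_{\mathbb H,p}^{0,\infty}(u\stackrel{\star}{\leftrightarrow}\Z_+)$ with the reverse inequality being trivial since each phase has modulus $1$.

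The main obstacle I expect is the winding/phase bookkeeping: making precise that the relevant medial edge(s) around the dual vertex $u$ correspond exactly (up to constants) to the dual-connection event, and that the winding of $\gamma$ through such an edge is deterministic modulo $2\pi$ — equivalently, extracting the comparability from the known value of $\arg F$ modulo $\pi$ together with the sign/positivity coming from \eqref{bbbbbbbb}. In the infinite domain $\mathbb H$ one also has to be slightly careful that the observable there is defined as a limit of finite-volume observables and that the winding convention (fixed to $0$ on north-east-pointing edges of the free arc) is the one being used; but since all the relevant identities pass to the limit, this is routine. Once the phase is under control, the estimate is immediate, which is why this is stated as an ``easy yet slightly technical'' lemma preceding the real work in Proposition~\ref{estimate in H}.
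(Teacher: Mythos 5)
Your overall strategy --- identify the contributing phases as lying on a fixed line, then rule out near-total cancellation --- is a sensible way to think about the lemma, and you are right that the winding at $e_u$ is only determined modulo $2\pi$, so that (unlike the free-arc situation of Lemma~\ref{boundary}) a sign ambiguity remains. However, the argument you give to rule out cancellation is circular. The representation \eqref{bbbbbbbb} writes $F_p(x)=\mathbb E^x_p[F_p(X_\tau)m_\tau]$, where $\tau$ is the hitting time of the set $\partial$ of north-east--pointing medial edges adjacent to the \emph{wired} arc; it propagates sign information from $\partial$ into the interior, not the other way around. The edge $e_u$ is itself (essentially) on $\partial$: from $e_u$ the walk stops at once and \eqref{bbbbbbbb} is tautological, while to use it at interior points one must already know the sign of $F_p$ on $\partial$, which is precisely what the lemma is trying to determine. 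The fact that $F_p(e_b)=1>0$ does not help either, since $e_b$ lies on the free arc, where the walk is reflected rather than absorbed.

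The paper's proof avoids this by working purely locally and never invoking the walk. It writes the vertex relation \eqref{rel_vertex} at the medial vertex $v$ immediately to the left of $u$, projects onto the line $\e^{-i\pi/4}\mathbb R$ using the known argument of $F$ modulo $\pi$, and so expresses $F(e_u)=F(NE)$ as an explicit real linear combination of $F(NW)$ and $F(SW)$. These two \emph{are} genuine boundary medial edges, along which the winding of $\gamma$ is deterministic ($-\pi/2$ and $-\pi$ respectively), so (the dual form of) Lemma~\ref{boundary} gives $|F(NW)|=\phi_{\mathbb H,p}^{0,\infty}(u\stackrel{\star}{\leftrightarrow}\Z_+)$ and $|F(SW)|=\phi_{\mathbb H,p}^{0,\infty}(u-1\stackrel{\star}{\leftrightarrow}\Z_+)$ exactly. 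One obtains $\cos(\pi/4+\alpha)\,F(e_u)=\phi_{\mathbb H,p}^{0,\infty}(u\stackrel{\star}{\leftrightarrow}\Z_+)-\cos(\pi/4-\alpha)\,\phi_{\mathbb H,p}^{0,\infty}(u-1\stackrel{\star}{\leftrightarrow}\Z_+)$, a signed difference; the lower bound then comes not from any positivity of the walk but from the monotonicity $\phi_{\mathbb H,p}^{0,\infty}(u-1\stackrel{\star}{\leftrightarrow}\Z_+)\le\phi_{\mathbb H,p}^{0,\infty}(u\stackrel{\star}{\leftrightarrow}\Z_+)$ (comparison of boundary conditions) together with $1-\cos(\pi/4-\alpha)$ being bounded away from $0$ for $p$ near $p_c$. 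So the idea you were missing is to transfer the problem, by one application of the local relation, to genuine boundary edges where the phase is completely pinned down, rather than trying to control the sign at $e_u$ globally.
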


\begin{proof}
If $v$ is the vertex of the medial lattice on the left of $u$, relation \eqref{rel_vertex} around $v$ gives
  $F(NW)+F(SE)=\e^{i\alpha}(F(NE)+F(SW))$, where edges are indexed with respect to the direction {\bf they are pointing to} (see Fig.~\ref{fig:domains}). Since we know the complex argument modulo $\pi$ of the observable, we can project the relation on $\e^{-i\pi/4}\mathbb R$, to find
 $$\e^{i\pi/4}F(NW)-\cos (\pi/4-\alpha)iF(SW)~=~\cos(\pi/4+\alpha)~F(NE).$$
Now, the argument of the observable at $NW$ and $SW$ is in fact determined, since the winding on the boundary is deterministic (it equals $-\pi/2$ for $NW$, and $-\pi$ for $SW$). Therefore, Lemma~\ref{boundary} implies 
\begin{align*}
\e^{i\pi/4}F(NW)~&=~|F(NW)|~=~\phi_{\mathbb H,p}^{0,\infty}(u\stackrel{\star}{\leftrightarrow} \Z_+)\\
iF(SW)~&=~|F(SW)|~=~\phi_{\mathbb H,p}^{0,\infty}(u-1\stackrel{\star}{\leftrightarrow} \Z_+).
\end{align*}
So, using the fact that $NE=e_u$, we get
  $$\cos (\pi/4+\alpha)F(e_u)~=~\phi_{\mathbb H,p}^{0,\infty}(u\stackrel{\star}{\leftrightarrow} \Z_+)
-\cos(\pi/4-\alpha)\phi_{\mathbb H,p}^{0,\infty}(u-1\stackrel{\star}{\leftrightarrow} \Z_+).$$
Now, $\phi_{\mathbb H,p}^{0,\infty}(u-1\stackrel{\star}{\leftrightarrow} \Z_+)\le \phi_{\mathbb H,p}^{0,\infty}(u\stackrel{\star}{\leftrightarrow} \Z_+)$ thanks to the comparison between boundary conditions. We deduce 
$$\frac{1-\cos(\pi/4-\alpha)}{\cos(\pi/4+\alpha)}~\phi_{\mathbb H,p}^{0,\infty}(u\stackrel{\star}{\leftrightarrow} \Z_+)~\le~F(e_u)~\le~\frac{1}{\cos(\pi/4+\alpha)}~\phi_{\mathbb H,p}^{0,\infty}(u\stackrel{\star}{\leftrightarrow} \Z_+)$$
which is the claim. \QED

\end{proof}

We are now in a position to prove the proposition. 

\proofof{Proposition~\ref{estimate in H}} Fix $n>0$ and $p\ge p_c-\frac{\lambda}{n}$ and denote the fermionic observable in $(\mathbb H,0,\infty)$ by $F_p$. Lemma~\ref{extension boundary} implies 
\begin{equation}\label{b}F_p(n)=\mathbb E^n_p[F_p(X_\tau)m_\tau]\asymp \mathbb E^n_p[\phi_{\mathbb H,p}^{0,\infty}(X_\tau\stackrel{\star}{\leftrightarrow} \Z_+)m_\tau].\end{equation}
We know that 
$$\phi_{\mathbb H,p}^{0,\infty}(u\stackrel{\star}{\leftrightarrow} \Z_+)\ge C_3/\sqrt {|u|},$$
hence \eqref{b} implies
\begin{equation}\label{lower Fp}F_p(n)~\ge~C_4\mathbb E^n_p[~\phi_{\mathbb H,p}^{0,\infty}(X_\tau\stackrel{\star}{\leftrightarrow} \Z_+)~m_\tau]~\ge ~C_4C_3~\mathbb E^n_p[~m^p_\tau/\sqrt{|X_\tau|}]\,,\end{equation}
with two universal constants $C_3,C_4>0$. 

Therefore, it is sufficient to prove that $|X_\tau|$ is not larger than $n$ and that $m_\tau$ is larger than some constant $\varepsilon$ with probability bounded away from 0 uniformly in $n$. The second condition can be replaced by the event $\tau\le n^2$ for instance.

The walk $X_t$ away from the real axis is just simple random walk, while on the free arc it has some outwards drift. So, it is sufficient to prove that $X_t$ exits $[0,2n]\times[0,n]$ through $[0,2n]\times\{n\}$ in fewer than $n^2/2$ steps with probability larger than some constant $c>0$ not depending on $n$. Indeed, it has then a uniformly positive probability to exit the domain in fewer than $n^2/2$ additional steps and to satisfy $|X_\tau|\le n$.

Consider $(X_t)_{t\le n^2/2}=(A_t,B_t)_{t\le n^2/2}$ conditioned on the event that $(X_t)_{t\le n^2/2}$ visits the free arc fewer than $n$ times. The probability that the first coordinate is less than $n$ for every $t\le n^2/2$ is bounded away from 0 uniformly in $n$ (since the number of visits of $(X_t)$ to the free arc is less than $n$, $(A_t)$ can be compared to a symmetric random walk with a deterministic drift of order $r n$ for $r<1$). Now, conditioned on the visits of $(X_t)$ to the free arc, $(A_t)$ and $(B_t)$ are independent. Thus, $(B_t)$ is a random walk reflected at the origin conditioned on the fact that it does not visit 0 more than $n$ times. In time $n^2/2$, it reaches height $n$ with probability bounded away from 0, uniformly in $n$. The claim follows. 
\QED

\subsection{Proof of Theorem~\ref{RSW offcritical}}

We first prove crossing probabilities in rectangles with specific boundary conditions. Then, we use these crossings to construct crossings in arbitrary rectangles with free boundary conditions.

\paragraph{Crossing in rectangles with Dobrushin boundary conditions.}

Let us first use the estimates obtained in the previous subsection to prove crossing probabilities in the strip and the half-plane. The proof follows a second moment argument. 

\begin{proposition}\label{prop:crossings}
Fix $\lambda>0$. There exists $C_6=C_6(\lambda)>0$ such that for every $n>0$ and $p > p_c-\frac\lambda n$,
$$\phi_{S_n,p}^{-\infty,\infty}([-n,n]\leftrightarrow in+\Z)\ge C_6$$
and
$$\phi_{\mathbb H,p}^{-\infty,\infty}([3n,4n]\leftrightarrow \Z_-)\ge C_6.$$
\end{proposition}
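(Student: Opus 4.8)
The plan is to prove both crossing estimates by a second-moment argument on the number of sites on the free arc that are connected to the wired arc, exactly in the spirit of the proof of the critical RSW bounds in \cite{DHN10}, but feeding in the off-critical lower bounds on one-point connection probabilities from Propositions~\ref{low strip} and~\ref{estimate in H} in place of their critical counterparts. First I would set up, in the strip $S_n$, the random variable
\[
N ~:=~ \sum_{x\in [-n,n]\times\{0\}} \mathbbm 1_{x\leftrightarrow in+\Z},
\]
so that $\{[-n,n]\leftrightarrow in+\Z\}\supseteq\{N>0\}$, and apply the Cauchy--Schwarz / Paley--Zygmund inequality
\[
\phi_{S_n,p}^{-\infty,\infty}(N>0) ~\ge~ \frac{(\E N)^2}{\E N^2}.
\]
The first-moment term is handled directly: by Lemma~\ref{boundary} and translation invariance along the strip, each summand equals $|F_p(e_x)|$, and Proposition~\ref{low strip} (the lower bound \eqref{low}, valid since $p>p_c-\lambda/n$) gives $\phi_{S_n,p}^{-\infty,\infty}(x\leftrightarrow in+\Z)\ge C_1/\sqrt n$ for each of the $\asymp n$ points $x$, hence $\E N\gtrsim \sqrt n$. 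The analogous setup in $\H$ uses $N:=\sum_{x\in[3n,4n]} \mathbbm 1_{x\stackrel{\star}{\leftrightarrow}\Z_-}$ with the one-point bound coming from Proposition~\ref{estimate in H}; note the dual connection there matches the statement once one recalls the self-dual correspondence between the half-plane with free/wired arcs and its dual.

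The core of the argument is the second-moment bound $\E N^2 \lesssim n$, i.e.\ showing $\sum_{x,y}\phi(x\leftrightarrow in+\Z,\, y\leftrightarrow in+\Z)\lesssim n$. The standard way (as in \cite{DHN10}) is to bound the pair connection probability using the domain Markov property and comparison between boundary conditions: conditionally on the cluster of the wired arc, the event $\{x\leftrightarrow \text{wired}\}$ for a second point $x$ near the free arc can be compared to a one-point event in a smaller Dobrushin domain, yielding $\phi(x\leftrightarrow in+\Z \mid y\leftrightarrow in+\Z)\lesssim \phi_{\text{smaller domain}}(x\leftrightarrow\text{wired})\lesssim 1/\sqrt{|x-y|\wedge n}$ (up to the right geometric gymnastics with $y$ on the bottom and the wired arc on top at height $n$). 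Summing $\sum_{x,y\in[-n,n]} 1/\sqrt{n}\cdot 1/\sqrt{|x-y|\wedge n}$ then gives $O(n)\cdot O(\sqrt n)/\sqrt n=O(n)$, and the Paley--Zygmund ratio is bounded below by a constant $C_6(\lambda)$. The point to be careful about is that the off-critical one-point estimate must be available in all the intermediate domains appearing in the conditioning, uniformly for $p>p_c-\lambda/n$; since \eqref{low} and Proposition~\ref{estimate in H} are stated for the strip and the half-plane specifically, I would either reprove the needed second-moment input via the fermionic observable directly (using \eqref{bbbbbbbb} and the massive random-walk representation, as in the proofs above), or invoke monotonicity in boundary conditions and in $p$ to reduce the intermediate-domain estimates to the strip/half-plane ones.

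The main obstacle I expect is precisely this uniform control of the pair connection probabilities in the conditioned domains: unlike percolation, the FK-Ising model has long-range dependence, so one cannot naively decouple the two connection events, and one must argue via the domain Markov property that after exploring the cluster of the wired arc, the residual domain is still a domain in which the (off-critical, mesh-dependent) half-plane one-arm estimate $\asymp 1/\sqrt{\text{dist}}$ applies with constants not deteriorating as $p\to p_c$ along the window $|p-p_c|\le\lambda/n$. A secondary technical point is the passage from the half-plane statement $\phi_{\H,p}^{-\infty,\infty}([3n,4n]\leftrightarrow\Z_-)\ge C_6$ as written (with $-\infty,\infty$ boundary conditions, i.e.\ fully wired on $\R$) versus the $0,\infty$ Dobrushin boundary used in Proposition~\ref{estimate in H}: here comparison between boundary conditions \eqref{comparison_between_boundary_conditions} immediately gives the inequality in the right direction, since wiring more of the boundary only increases the crossing probability, so this step is routine.
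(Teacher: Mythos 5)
Your plan—second moment on the number of free-arc sites connected to the wired arc, using the off-critical one-point estimates from Propositions~\ref{low strip} and~\ref{estimate in H}—is the right skeleton and matches the paper. But you have misjudged where the work actually lies, and the "main obstacle" you flag is one the paper never has to face.

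You propose to re-derive the second-moment bound $\phi_{S_n,p}^{-\infty,\infty}(N^2)\lesssim n$ at the off-critical parameter $p$, via domain Markov and boundary comparison in the conditioned domains, and you correctly worry that it is not obvious the half-plane $1/\sqrt{\cdot}$ estimate survives in all the intermediate sub-domains uniformly in the window $|p-p_c|\le\lambda/n$. The paper sidesteps this completely: $N$ is a sum of indicators of increasing events, so $N^2$ is an increasing function of the configuration, and therefore for $p\le p_c$ one simply has
\[
\phi_{S_n,p}^{-\infty,\infty}(N^2)~\le~\phi_{S_n,p_c}^{-\infty,\infty}(N^2)~\le~C\,n,
\]
where the critical second-moment bound is taken off the shelf from \cite{DHN10}. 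No off-critical pair-connection estimate, no domain Markov gymnastics, no worry about degrading constants. For $p>p_c$ the entire Proposition follows from monotonicity of the crossing probability itself, so only the regime $p_c-\lambda/n<p\le p_c$ needs an argument at all. You do mention "invoke monotonicity in $p$" as a fallback, but it is not a fallback — it is the whole point, and it collapses what you describe as the core difficulty into a one-line observation. The genuinely new content of Proposition~\ref{prop:crossings} relative to \cite{DHN10} is exclusively the lower bound on the first moment, i.e.\ the off-critical observable estimates, which you already have.

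Two smaller points. In the half-plane case your sum should count primal connections, $\sum_{x\in[3n,4n]}\mathbbm 1_{x\leftrightarrow\Z_-}$, not dual ones: Proposition~\ref{estimate in H} is a statement about a free-arc vertex being primally connected to the wired arc, matching the event $[3n,4n]\leftrightarrow\Z_-$ directly; no dual-reinterpretation is needed. And the superscript ${}^{-\infty,\infty}$ on $\phi_{\mathbb H,p}$ in the statement is not "fully wired on $\R$" — by the convention laid down in Subsection~\ref{sec:definition}, the Dobrushin condition on $\H$ used throughout is ${}^{0,\infty}$ (free on $\Z_+$, wired on $\Z_-$), which is what Proposition~\ref{estimate in H} is stated for, and the proof of Proposition~\ref{crossing weird} uses it in that form; treat the ${}^{-\infty,\infty}$ as a notational slip rather than a boundary condition you need to reduce from.
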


\begin{figure}[ht]
  \begin{center}
    \includegraphics[width=.6\hsize]{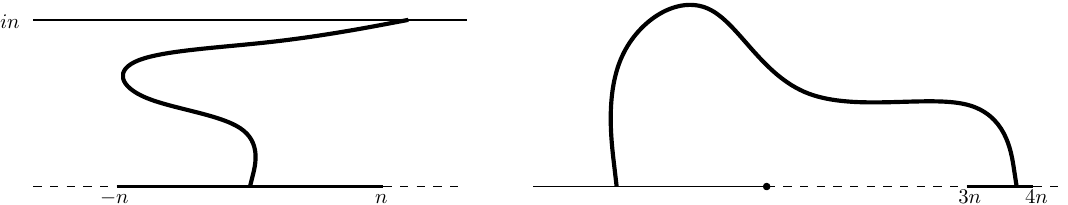}  \end{center}
  \caption{The two crossing events of Proposition~\ref{prop:crossings}.}
  \label{fig:crossings}
\end{figure}

\begin{proof}
We present the proof for $S_n$ (a similar argument works for $\mathbb H$). Let $N$ be the (random) number of sites on $[-n,n]$ which are connected by an open path to $in+\Z$. Proposition~\ref{low strip} implies that 
\begin{equation}
\phi_{S_n,p}^{-\infty,\infty}(N)~=~\sum_{x\in[-n,n]}\phi_{S_n,p}^{-\infty,\infty}(x\leftrightarrow in+\Z)~\ge~(2n+1)\frac{C_1}{\sqrt n}~\ge~2C_1\sqrt n.
\end{equation}
Moreover, for $p \leq p_c$,
$$\phi_{S_n,p}^{-\infty,\infty}(N^2)~\le~\phi_{S_n,p_c}^{-\infty,\infty}(N^2).$$
The right hand side is a quantity at the critical point and was already studied in the proof of the main theorem of \cite{DHN10} (in fact, only closely related quantities were studied, but the generalization is straightforward). In particular, it was proved in that article that $$\phi_{S_n,p_c}^{-\infty,\infty}(N^2)~\le C_6 n\,.$$
Cauchy-Schwarz thus implies that 
$$\phi_{S_n,p}^{-\infty,\infty}([-n,n]\leftrightarrow in+\Z) = \phi_{S_n,p}^{-\infty,\infty}(N>0) \ge 4C_1^2/C_6\,,$$ uniformly in $n$. For $p > p_c$ the result follows from monotonicity.
\QED
\end{proof}
\medbreak

It is now easy to reduce crossing probabilities in the strip and the half-plane to crossing probabilities in (possibly very large) rectangles. The idea is that a crossing cannot explore too much of the strip or the half-plane, since there exist slightly supercritical dual crossings preventing it.

\begin{proposition}\label{crossing weird}
Fix $\lambda>0$. There exist $C_7>0$ and $M>0$ such that for every $n>0$ and $p>p_c-\frac \lambda n$,
$$\phi_{[-Mn,Mn]\times[0,n],p}^{(i-M)n,(i+M)n}([-n,n]\leftrightarrow in+\Z)\ge C_7$$
and 
$$\phi_{[-Mn,Mn]\times[0,Mn],p}^{0,-Mn}([3n,4n]\leftrightarrow \Z_-)\ge C_7.$$
\end{proposition}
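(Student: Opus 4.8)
The plan is to show that a crossing in the strip $S_n$ (or the half-plane $\H$) from $[-n,n]$ to the top cannot reach too far horizontally, because with high probability there is a dual path blocking it. Concretely, I would work at a parameter $p > p_c - \lambda/n$ and compare with the dual model at parameter $p^*$, which satisfies $p^* < p_c + \lambda'/n$ for a suitable $\lambda' = \lambda'(\lambda)$, so that dual crossing estimates at and slightly above criticality (Theorem \ref{RSW critical} and monotonicity, or rather the dual analogue of Proposition \ref{prop:crossings}) are available uniformly in $n$. First I would fix the constant $M$ large and observe: if the event $\{[-n,n]\leftrightarrow in+\Z\}$ holds in $S_n$ but no crossing in the smaller box $[-Mn,Mn]\times[0,n]$ exists, then any open path realizing the connection must exit the box $[-Mn,Mn]\times[0,n]$ through one of its vertical sides $\{\pm Mn\}\times[0,n]$; hence there is an open path in $S_n$ joining the left (or right) vertical side of the box to $[-n,n]$, crossing horizontally a region of aspect ratio roughly $M$. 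But such a long horizontal open crossing in $S_n$ is prevented by a dual vertical crossing of a subrectangle of the strip, i.e.\ a dual top-to-bottom crossing; the probability of this dual crossing existing in each of the $\asymp M$ disjoint mesoscopic blocks of size $\asymp n \times n$ between $[-n,n]$ and $\{\pm Mn\}$ is bounded below by a constant $c_-$, uniformly in $n$ and in the boundary conditions, by Theorem \ref{RSW critical} applied to the dual model (whose parameter is below $p_c + \lambda'/n$), together with the comparison between boundary conditions \eqref{comparison_between_boundary_conditions}.

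The key quantitative step: using independence-type arguments is not directly available (the FK model has dependence), so instead I would use the comparison of boundary conditions and the FKG inequality as follows. Condition on the worst possible boundary conditions for the dual crossing events (free for the dual, i.e.\ wired for the primal), and note that the dual vertical crossings of the $M$ disjoint blocks $[2kn, 2(k+1)n]\times[0,n]$ are all increasing events for the dual model; by FKG their intersection has probability at least $c_-^{M}$. If even one of these dual vertical crossings occurs in a block located strictly between $[-n,n]$ and the vertical side $\{Mn\}\times[0,n]$ (and similarly on the left), then no open path from $[-n,n]$ can exit through $\{\pm Mn\}\times[0,n]$. Therefore, on the intersection of a dual-blocking event on the left, a dual-blocking event on the right, and the (positive-probability, by Proposition \ref{prop:crossings}) event $\{[-n,n]\leftrightarrow in+\Z\}$ in $S_n$, the connection is forced to stay inside $[-Mn,Mn]\times[0,n]$, which realizes the event in the conclusion. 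By FKG (all three events are increasing for the primal model once we translate the dual-blocking events into primal decreasing events --- here I must be slightly careful, since dual crossings are \emph{decreasing} for the primal, so I should instead apply the Harris/FKG inequality separately to the primal-increasing part and handle the decreasing part by a union bound, bounding the probability of \emph{no} blocking path by $(1-c_-)^{\Theta(M)}$, made $< C_6/2$ by taking $M$ large). Combining, the probability of the target event is at least $C_6 - 2(1-c_-)^{\Theta(M)} \ge C_7 > 0$.

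For the half-plane statement the argument is the same with the obvious modifications: the crossing from $[3n,4n]$ to $\Z_-$ in $\H$ must, if it leaves $[-Mn,Mn]\times[0,Mn]$, exit through the top side $[-Mn,Mn]\times\{Mn\}$ or the far vertical sides, and in each case one finds a long open crossing of an annular region of inner radius $\asymp n$ and outer radius $\asymp Mn$ around the wired arc, which is blocked with probability $\ge c_-$ per scale by dual circuit-type crossings (Theorem \ref{RSW critical} for the dual model), so a union/FKG bound over $\Theta(\log M)$ or $\Theta(M)$ scales again traps the connection inside the box. The main obstacle I anticipate is the bookkeeping around boundary conditions: the dual-blocking events are decreasing for the primal measure while $\{[-n,n]\leftrightarrow in+\Z\}$ is increasing, so one cannot apply FKG to all of them simultaneously; the clean fix is to estimate $\phi^{-\infty,\infty}_{S_n,p}(\text{connection exits the box})$ directly by a union bound over the $\Theta(M)$ blocks, using in each block the dual RSW lower bound $c_-$ (uniform in boundary conditions, hence applicable after conditioning on everything outside the block via the domain Markov property), getting $(1-c_-)^{\Theta(M)}$, and only then subtracting from the $C_6$ of Proposition \ref{prop:crossings}. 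A secondary technical point is ensuring $p^* < p_c + C/n$ when $p > p_c - \lambda/n$, which follows from the smoothness of the duality relation $pp^*/((1-p)(1-p^*)) = 2$ near the self-dual point.
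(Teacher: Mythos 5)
Your proposal is correct and takes essentially the same route as the paper: Proposition~\ref{prop:crossings} provides the base crossing probability $C_6$, dual blocking crossings trap the primal path in the box, and one subtracts the failure probability of the blocking (made small by taking $M$ large) rather than attempting FKG with a mix of increasing and decreasing events. The only cosmetic difference is that the paper phrases the blocking step as a single application of Theorem~\ref{RSW critical} to one long flat rectangle of aspect ratio $\asymp M$, whereas you unpack that into the sequential exploration of $\Theta(M)$ disjoint $n\times n$ blocks via the domain Markov property --- which is exactly the argument underlying the paper's terse ``use Theorem~\ref{RSW critical} to prove this fact''; and, as you noted, the regime $p>p_c$ is dispatched by monotonicity of the target event, so one can always work with dual parameter $p^*\ge p_c$ where the critical RSW bound applies directly.
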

\begin{proof}
As before, we do this in the case of the strip. Fix $M$ large enough so that, at criticality, the probability that there exists a vertical dual crossing with free boundary conditions of $[n,Mn]\times[0,n]$ exceeds $1-C_6/3$ (use Theorem~\ref{RSW critical} to prove this fact). Then, with probability $C_6/3$, there will exist a crossing of $[-n,n]$ to $in+\Z$ and two dual vertical crossings in $[n,Mn]\times[0,n]$ and $[-Mn,-n]\times[0,n]$. The domain Markov property and the comparison between boundary conditions imply the result. \QED
\end{proof}
\medbreak

\paragraph{Crossing in rectangles with free boundary conditions.}

A consequence of Proposition~\ref{crossing weird} is the existence of crossings inside a box with free boundary conditions everywhere. Indeed, although the previous result only deals {\em a priori} with domains where a part of the boundary is already wired, this condition can be removed.

\begin{proposition}\label{rid of BC}
Fix $\lambda>0$. There exist $C_8,M>0$ such that for every $n>0$ and $p>p_c-\frac\lambda n$,
$$\phi_{[-Mn,Mn]\times[0,n],p}^0\big([-Mn,Mn]\times[0,n/2]\text{ is crossed vertically}\big)\ge C_8.$$
\end{proposition}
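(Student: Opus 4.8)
The plan is to split according to whether $p\geq p_c$ or $p_c-\lambda/n<p<p_c$. The case $p\geq p_c$ is essentially free: the rectangle $R=[-Mn,Mn]\times[0,n/2]$ has bounded aspect ratio, so Theorem~\ref{RSW critical} gives $\phi^{\xi}_{R,p_c}(\mathcal{C}_v(R))\geq c_-(M)>0$ for \emph{every} boundary condition $\xi$, in particular the free one; by the domain Markov property and the comparison of boundary conditions, $\phi^0_{[-Mn,Mn]\times[0,n],p}$ restricted to $R$ stochastically dominates $\phi^0_{R,p}$, which in turn dominates $\phi^0_{R,p_c}$ because $p\mapsto\phi^0_{G,p,2}$ is stochastically increasing; hence the bound survives with $C_8=c_-(M)$. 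So the whole difficulty sits in the near-critical subcritical window, where monotonicity from $p_c$ points the wrong way and Proposition~\ref{crossing weird} must genuinely be used.

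In that window the first step is cheap. The strip version of Proposition~\ref{crossing weird} gives, with probability $\geq C_7$, an open crossing from $[-n,n]\times\{0\}$ to the wired top of $[-Mn,Mn]\times[0,n]$; stopping it the first time it reaches height $n/2$ produces an open path from the bottom side to $[-Mn,Mn]\times\{n/2\}$ staying inside $[-Mn,Mn]\times[0,n/2]$, i.e.\ a vertical crossing of that rectangle. The catch — and the reason the proposition is not yet proved — is that this lower bound is valid under the Dobrushin measure with a wired top arc, whereas the statement asks for free boundary conditions, and the free measure is stochastically the smallest one, so no comparison of boundary conditions can transfer the estimate. Removing the wired arc is the crux, and it is exactly the point that is more delicate here than in \cite{DHN10}.

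The plan for this crux is to manufacture the missing wiring from the configuration itself, working inside a larger box carrying free boundary conditions everywhere and exploiting that for $p<p_c$ the dual model sits at $p^*>p_c$, so its crossing probabilities are bounded below \emph{uniformly} (by stochastic monotonicity from $p_c$ and Theorem~\ref{RSW critical}). Slightly supercritical dual crossings are thus cheap, and can be used both to confine primal crossings to prescribed rectangles (as already in the proof of Proposition~\ref{crossing weird}) and, through planar duality, to force an open path to traverse the target rectangle in the required direction. The idea is then to bootstrap: feed, via the domain Markov property and the comparison of boundary conditions, the open path produced at one stage as the wired arc for the next application of Proposition~\ref{crossing weird} — using both its strip and its half-plane versions — and glue a bounded number of such pieces together with the FKG inequality, so as to build a vertical crossing of $[-Mn,Mn]\times[0,n/2]$ under free boundary conditions with probability bounded below by some $C_8=C_8(\lambda)>0$, once $M$ has been chosen large enough. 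The main obstacle is precisely to organize this bootstrap so that at no stage does one secretly need an externally imposed wired arc, and so that the constants do not degrade through the (bounded) recursion; conditioning on the exploration interfaces of the Dobrushin domains appearing in Proposition~\ref{crossing weird}, and discarding from them the information that retreats from the region of interest, is the natural device for carrying this out.
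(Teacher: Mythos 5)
Your analysis of the $p\geq p_c$ case is correct and can indeed be dispatched by monotonicity in $p$ and in the domain plus Theorem~\ref{RSW critical}. You also correctly identify the crux for $p<p_c$: the Dobrushin measure with a wired top arc dominates the free measure, so no comparison of boundary conditions transfers the bound from Proposition~\ref{crossing weird} in the wanted direction. But the strategy you propose for resolving that crux does not actually resolve it, and you acknowledge as much in your final sentence. The bootstrap you sketch — use a primal path produced at one stage as a wired arc for the next — is circular at its first step: in a box with free boundary conditions you have no tool to produce the seed path, since every version of Proposition~\ref{crossing weird} presupposes a wired arc. Invoking ``exploration interfaces of the Dobrushin domains appearing in Proposition~\ref{crossing weird}, discarding information that retreats'' does not break the circle; exploration interfaces only exist once a Dobrushin domain has been fixed, which is exactly what is not available. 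Likewise, the claim that supercritical dual crossings can ``through planar duality force an open path to traverse the target rectangle'' is not right: dual crossings obstruct primal crossings, they do not produce them.

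The paper's proof removes the wired arc in the \emph{opposite} direction, with a short and decisive trick that your plan misses. Work under the Dobrushin measure $\phi^{(i-M)n,(i+M)n}_{[-Mn,Mn]\times[0,n],p}$. Let $A_n$ be the target crossing of $[-Mn,Mn]\times[0,n/2]$ and let $B_n$ be the event that the upper half $[-Mn,Mn]\times[n/2,n]$ is dual-crossed horizontally. By Theorem~\ref{RSW critical} and monotonicity (the dual is supercritical for $p<p_c$), $\phi^{(i-M)n,(i+M)n}(B_n\mid A_n)\geq c$ uniformly. The key observation is that conditionally on $B_n$, the lowest horizontal dual crossing screens the region below it from the wired top arc; by the domain Markov property the law below that dual path has \emph{free} boundary conditions everywhere, and since the domain below is contained in $[-Mn,Mn]\times[0,n]$ and $A_n$ is increasing, this conditional measure is stochastically dominated by $\phi^0_{[-Mn,Mn]\times[0,n],p}$. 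Hence
\[
\phi^0_{[-Mn,Mn]\times[0,n],p}(A_n)\;\geq\;\phi^{(i-M)n,(i+M)n}(A_n\mid B_n)\;\geq\;\phi^{(i-M)n,(i+M)n}(A_n\cap B_n)\;=\;\phi^{(i-M)n,(i+M)n}(B_n\mid A_n)\,\phi^{(i-M)n,(i+M)n}(A_n)\;\geq\;c\,C_7.
\]
So the decoupling is achieved by conditioning on a dual crossing inside the Dobrushin box, not by bootstrapping wiring inside a free box. Your proposal identifies the right difficulty but does not supply this mechanism, so it does not constitute a proof as written.
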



\begin{proof}
Fix $M$ so that Proposition~\ref{crossing weird} holds true. Let $A_n$ be the event that $[-Mn,Mn]\times[0,n/2]$ is crossed vertically. We have for every $n>0$,
$$\phi_{[-Mn,Mn]\times[0,n],p}^{(i-M)n,(i+M)n}(A_n)~\ge~C_7.$$
Let $B_n$ be the event that $[-Mn,Mn]\times[n/2,n]$ is dual-crossed horizontally. Theorem~\ref{RSW critical} implies that 
$$\phi_{[-Mn,Mn]\times[0,n],p}^{(i-M)n,(i+M)n}(B_n|A_n)~\ge~c$$
for some constant $c>0$, uniformly in $n$ and $p<p_c$. Now,
\begin{align*}
\phi_{[-Mn,Mn]\times[0,n],p}^0(A_n)~&\ge~\phi_{[-Mn,Mn]\times[0,n],p}^{(i-M)n,(i+M)n}(A_n|B_n)\\
&\ge~\phi_{[-Mn,Mn]\times[0,n],p}^{(i-M)n,(i+M)n}(A_n\cap B_n)\\
&=~\phi_{[-Mn,Mn]\times[0,n],p}^{(i-M)n,(i+M)n}(B_n|A_n)\cdot\phi_{[-Mn,Mn]\times[0,n],p}^{(i-M)n,(i+M)n}(A_n)\\
&\ge ~c\cdot C_7\,.
\end{align*}
\vskip -0.3in \QED
\end{proof}

We now prove that crossings of rectangles of any aspect ratio also exist.

\begin{lemma}\label{RSW inner}
Fix $\lambda>0$ and $\kappa>0$. There exists $C_9=C_9(\kappa,\lambda)>0$ such that for every $n$ and $p>p_c-\frac\lambda n$,
$$\phi^0_{[-n,(\kappa+1)n]\times [0,n],p}([0,\kappa n]\times [0,n]\text{ is crossed horizontally})\ge C_9.$$
\end{lemma}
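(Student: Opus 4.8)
The plan is to treat the two regimes $p\ge p_c$ and $p<p_c$ separately, the first being essentially free and the second carrying all the content. If $p\ge p_c$, observe that ``$[0,\kappa n]\times[0,n]$ is crossed horizontally'' is an increasing event supported inside that rectangle, so by the domain Markov property together with the comparison between boundary conditions \eqref{comparison_between_boundary_conditions} (free boundary conditions being the smallest) one has $\phi^0_{[-n,(\kappa+1)n]\times[0,n],p}(\,\cdot\,)\ge\phi^0_{[0,\kappa n]\times[0,n],p}(\,\cdot\,)$, and by stochastic monotonicity in $p$ the latter is $\ge\phi^0_{[0,\kappa n]\times[0,n],p_c}(\,\cdot\,)$, which is bounded below by Theorem~\ref{RSW critical}. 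So assume from now on $p_c-\lambda/n<p<p_c$. Here the same comparison reduces the claim to producing, uniformly over such $p$, a crossing of a \emph{fixed-aspect-ratio} rectangle with free boundary conditions and probability bounded away from $0$ --- a ``seed'' --- and then chaining $O(\kappa)$ suitably placed, translated and rotated copies of it inside $[-n,(\kappa+1)n]\times[0,n]$.

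For the chaining I would use the FKG inequality (valid since $q=2$) and the domain Markov property in the classical Russo--Seymour--Welsh fashion: the horizontal crossing of $[0,\kappa n]\times[0,n]$ is obtained by overlapping $N=N(\kappa)=O(\kappa)$ copies of the seed crossing together with the vertical ``connector'' crossings in the overlap regions, each of these $O(\kappa)$ events having probability bounded below by some $c(\lambda)>0$, so that FKG yields the bound with $C_9(\kappa,\lambda):=c(\lambda)^{N(\kappa)}>0$. The seed itself should come from Proposition~\ref{rid of BC} (plus the reflection and rotation symmetries of $\Z^2$), which provides a crossing of an elongated rectangle with free boundary conditions off criticality, while the connector crossings, being short, can be supplied either by Proposition~\ref{rid of BC} again or by Theorem~\ref{RSW critical} applied to the \emph{dual} model --- a dual crossing is a decreasing event in $\omega$, hence for $p\le p_c$ it is at least as likely as at criticality, so the critical estimate survives. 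Throughout, each relevant event is increasing and supported in a subdomain, so passing to the free boundary condition on the ambient box only helps, again by \eqref{comparison_between_boundary_conditions}.

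The hard part, and where I expect the real work to lie, is precisely the regime $p<p_c$: there $\phi_{p}\preceq\phi_{p_c}$, so monotonicity runs the wrong way for increasing crossing events and Theorem~\ref{RSW critical} cannot be invoked directly for the primal model, while the self-duality of the measure that rescues this at $p_c$ is unavailable. Thus the only genuinely off-critical primal input is Proposition~\ref{rid of BC}, which delivers a crossing only in the ``easy'' direction of a very elongated rectangle, and one must run an off-critical RSW-type upgrade to turn it into crossings of all aspect ratios. I expect the two delicate points to be (i) controlling the lateral excursions of that easy crossing by slightly supercritical dual crossings (as in the proof of Proposition~\ref{crossing weird}), so that it can be realized inside a bounded-aspect-ratio window with free boundary conditions and probability bounded below, and (ii) keeping track, through all the FKG gluings, of the boundary conditions produced by the domain Markov property, so that the comparison inequalities are always applied in the favourable direction --- this being the step the introduction flags as ``harder than in \cite{DHN10}''.
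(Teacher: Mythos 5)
Your big picture is right, but the central mechanism of the paper's proof is not captured, and the specific alternatives you offer would not close the gap.

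The paper does \emph{not} chain via FKG alone. It writes
\[
\phi^0\Big(A\cap\bigcap_{k}B_k\Big)=\phi^0(A)\prod_{k}\phi^0(B_k\mid A,B_r,\,r<k)
\]
and then lower-bounds each \emph{conditional} probability, not the unconditional ones. The conditioning is essential: the events $A,B_0,\ldots,B_{k-1}$ guarantee a cluster that touches the segment $[(k-1)\ep n,k\ep n]$ on the free boundary, and the domain Markov property then converts this conditioning into a \emph{Dobrushin} boundary condition (a wired arc running up to $k\ep n$) on the box around where $B_k$ lives. It is precisely this Dobrushin boundary condition which makes the half-plane case of Proposition~\ref{crossing weird} applicable, yielding $\phi^0(B_k\mid A,B_r,r<k)\ge C_7$. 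Your proposal replaces this by FKG, which would give $\phi^0(A)\prod_k\phi^0(B_k)$ and hence require a lower bound on the \emph{unconditional} $\phi^0(B_k)$ under free boundary conditions. That bound is not available from the tools at hand: Proposition~\ref{rid of BC} only supplies ``easy-direction'' crossings of very elongated rectangles, not the half-plane two-point connection events $B_k$ (building $B_k$ out of such crossings would require off-critical crossings of bounded-aspect-ratio rectangles, which is exactly the statement of the present lemma — circular). And Theorem~\ref{RSW critical} applied to the dual model gives lower bounds on \emph{dual} crossings (decreasing events), which block rather than create primal connections; it cannot serve as a ``connector.''

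So the gap is concrete: you never produce a usable lower bound on the connector events. You do correctly flag boundary-condition tracking as the delicate point, and you correctly identify Proposition~\ref{rid of BC} as the source of the ``seed'' and Proposition~\ref{crossing weird} as relevant, but the key step — using the domain Markov property to turn the sequential conditioning into a favourable Dobrushin boundary condition and feeding that into the half-plane case of Proposition~\ref{crossing weird} — is missing. (The case split $p\ge p_c$ vs.\ $p<p_c$ is correct but unnecessary; the paper's argument runs uniformly over $p>p_c-\lambda/n$.)
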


\begin{figure}[ht]
  \begin{center}
    \includegraphics[width=.6\hsize]{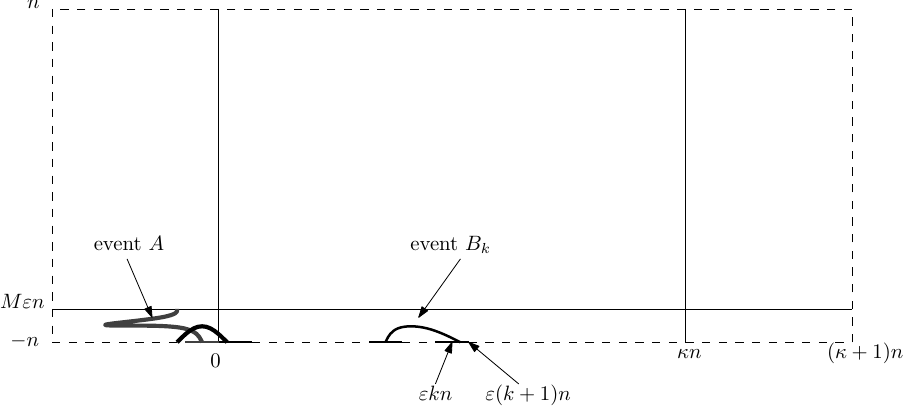}
  \end{center}
  \caption{The intersections of the events $A$ and $B_k$ create a crossing of the rectangle $[-n,n]\times[0,\kappa n]$.}
  \label{fig:crossing_long_rectangle}
\end{figure}

\begin{proof}
Fix $M=M(\lambda)$ as in Propositions~\ref{crossing weird} and~\ref{rid of BC}. Let $\ep=1/(2M)^2$. Let $A$ be the event that there exists a crossing from $[-\ep n,\ep n]$ to $iM\ep n+\Z$, and let $B_k$ be the event that there exists a path in $\Z\times[0,M\ep n]$ from $[(k+1)\ep n,(k+2)\ep n]$ to $[(k-1)\ep n,k \ep n]$. See Figure~\ref{fig:crossing_long_rectangle}. We have
\begin{align*}
\phi^0_{[-n,(\kappa+1)n]\times[0,n],p}([-n,n]\times[0,\kappa n]\text{ is crossed horizontally})&\\
&\hskip-6 cm\ge~\phi^0_{[-n,(\kappa+1)n]\times [0,n],p}\left(A\cap \bigcap_{k=0}^{\kappa/\ep-1}B_k\right)\\
&\hskip-6 cm = ~\phi^0_{[-n,(\kappa+1)n]\times[0,n],p}(A)\prod_{k=0}^{\kappa/\ep-1}\phi^0_{[-n,(\kappa+1)n]\times[0,n],p}(B_k|A,B_r,r<k)\,.\end{align*}
Furthermore, 
$$\phi^0_{[-n,(\kappa+1)n]\times [0,n],p}(A)\ge \phi^0_{[-n,n]\times[0,n/(2M)],p}(A).$$
Now, since $M\eps=1/(4M)$, the event $A$ in $[-n,n]\times[0,n/(2M)]$ corresponds to the existence of a crossing from the bottom to the middle, but with the additional constraint that it starts between $[-\ep n,\ep n]$. A union bound, comparison between boundary conditions, and Proposition~\ref{rid of BC} imply that
$$\phi^0_{[-n,n]\times[0,n/(2M)],p}(A)~\ge~\ep \phi^0_{[-n,n]\times[0,n/(2M)],p}([-n,n]\times[0,n/(4M)])~\ge~ \ep\, C_8\,.$$
Furthermore,
\begin{align*}\phi^0_{[-n,n]\times[-n,(\kappa+1)n],p}(B_k|A,B_r,r<k)~&\ge\phi^{k\ep n,\infty}_{[(k-M)\ep n,(k+M)\ep n]\times[0,M\ep n],p}(B_k)\ge C_7\,,
\end{align*}
using the comparison between boundary conditions and the half-plane case of Proposition~\ref{crossing weird}. Altogether, we obtain that
$$\phi^0_{[-n,(\kappa+1)n]\times[0,n],p}([-n,n]\times[0,\kappa n]\text{ is crossed horizontally})~\ge C_8C_7^{\kappa/\ep},$$
and the lemma is proved.
\QED
\end{proof}

\begin{figure}[ht]
  \begin{center}
    \includegraphics[width=.4\hsize]{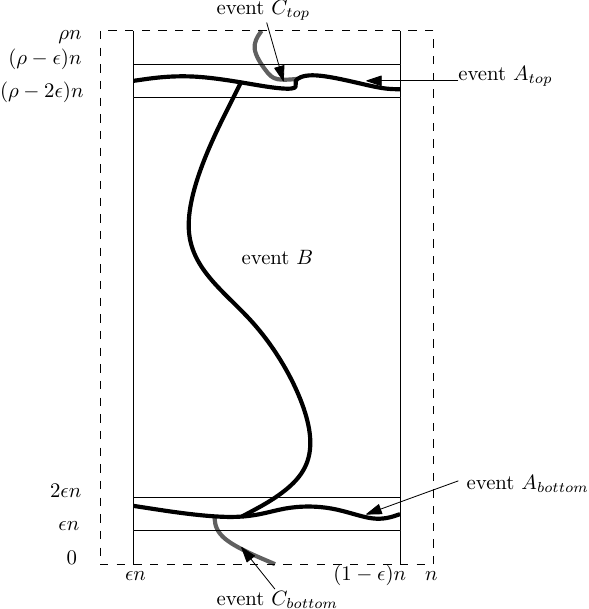}
  \end{center}
  \caption{The five events involved in the proof of Theorem~\ref{RSW offcritical}.}
  \label{fig:proof}
\end{figure}

\ni
{\bf Proof of Theorem~\ref{RSW offcritical}}
\ni
Fix $\ep<1/(4M)$. Let $A_{bottom}$ and $A_{top}$ be the events that $[\ep n,(1-\ep)n]\times[\ep n,2\ep n]$ and $[\ep n,(1-\ep)n]\times[(\rho-2\ep)n,(\rho-\ep) n]$ are crossed horizontally. Let $B$ be the event that $[\ep n,(1-\ep) n]\times[\ep n,(\rho-\ep)n]$ is crossed vertically. Let $C_{bottom}$ and $C_{top}$ be the events that $[\ep n,(1-\ep)n]\times[0,2\ep n]$ and $[\ep n,(1-\ep)n]\times[(\rho-2\ep)n,\rho n]$ are crossed vertically. See Figure~\ref{fig:proof}. By Lemma~\ref{RSW inner}, the events $A_{bottom}$, $A_{top}$ and $B$ have probability bounded away from 0 uniformly in $n$. The FKG inequality implies that their intersection also has this property. Now, conditionally on $A_{bottom}$, the event $C_{bottom}$ has probability larger than the probability that there exists a crossing in $[\ep n,(1-\ep)n]\times[0,2\ep n]$ with wired boundary condition on the top and free boundary condition on the bottom. Proposition~\ref{crossing weird} implies that this probability is larger than $C_7$ since $(1-2\ep)/(2\ep)>2M$ (the important thing is that the rectangle $[\ep n,(1-\ep)n]\times[0,2\ep n]$ is wide enough). The same reasoning can be applied to $C_{top}$, ergo the claim follows.
\QED

\subsection{Proofs of Theorems~\ref{thm:correlation length} and \ref{th.theta}}

First of all, a standard reasoning described in Step 2 of the proof of Theorem~1 of \cite{\HVising} shows that equation~\eqref{upp} implies the following lemma:

\begin{lemma}\label{lem:decay}
There exists $C_{10}>0$ such that
\begin{equation}
\phi_{p}(0\leftrightarrow \partial [-n,n]^2)~\le~C_{10}n^{-3}
\end{equation}
for every $n$ large enough and every $p\le p_c-C_{10}\frac{\log n}{n}$.\qed
\end{lemma}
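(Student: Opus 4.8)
The plan is to reduce the one-arm event to a single crossing event controlled by \eqref{upp}, at the price of a union bound that accounts for the loss of one power of $n$ between the exponents $4$ and $3$. First I would use planar symmetry: $\{0\leftrightarrow\partial[-n,n]^2\}$ is contained in the union over the four sides of $[-n,n]^2$ of the events that $0$ is joined to that side by an open path inside the box, and, stopping such a path at its last visit to the line $\{y=0\}$, reaching the top side forces a vertical crossing of $[-n,n]\times[0,n]$. Hence
\[
\phi_p\big(0\leftrightarrow\partial[-n,n]^2\big)~\le~4\,\phi_p\big([-n,n]\times[0,n]\text{ is crossed vertically}\big).
\]
Covering the bottom side by its $2n+1$ vertices and applying a union bound, the right-hand side is at most $4(2n+1)$ times the probability that a fixed boundary vertex $u$ is joined to the line $\{y=n\}$ by an open path staying inside the strip $\Z\times[0,n]$. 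By Lemma~\ref{boundary} this last probability is exactly the modulus of the fermionic observable at the corresponding boundary edge of the Dobrushin strip of height $n$, which \eqref{upp} bounds by $C_2 n^{-4}$ provided $p\le p_c-C_2\sqrt{\log n}/n$. Since $p\le p_c-C_{10}\sqrt{\log n}/n$, this holds once $C_{10}$ is chosen large enough in terms of $C_2$, and then $\phi_p(0\leftrightarrow\partial[-n,n]^2)\le 4(2n+1)C_2 n^{-4}\le C_{10}n^{-3}$ for $n$ large.

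The one genuine difficulty is a question of boundary conditions, and this is where the ``standard reasoning'' referred to above does its work. The strip estimate \eqref{upp} is a statement about the strip with \emph{Dobrushin} boundary conditions, with a free arc along $\{y=0\}$; but the conditional law of the configuration inside $\Z\times[0,n]$ given the rest of the plane is, for $q=2$, essentially wired on \emph{both} sides of the strip (the exterior links the two sides around), and the wired-strip connection probability decays only like $e^{-c\sqrt{\log n}}$, which is far larger than $n^{-4}$. The polynomial decay in \eqref{upp} really uses the free arc: in the massive random walk representation behind Proposition~\ref{mass} and \eqref{bbbbbbbb}, the relevant harmonic function on a strip of height $n$ decays like $\exp(-c\,n^2/\ell^2)$ (the walk needs time of order $n^2$ to reach the wired arc) rather than like $\exp(-c\,n/\ell)$. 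So before invoking \eqref{upp} one must exhibit a free arc on the side facing the origin. This is done via planar duality: at $p<p_c$ the dual model has parameter $p^*>p_c$ and is strongly supercritical on scale $n$, so one can confine the primal crossing of the strip between slightly-supercritical dual crossings --- the same device as in the proof of Proposition~\ref{crossing weird} --- or, equivalently, explore from outside the outermost dual-open interface bounding the cluster of the origin; either way one exposes free boundary conditions along the arc near the origin, and the configuration on the relevant side is dominated, through \eqref{comparison_between_boundary_conditions}, by a Dobrushin-strip (or Dobrushin-half-plane) configuration to which \eqref{upp}, or its half-plane analogue obtained from Proposition~\ref{estimate in H} and Lemma~\ref{extension boundary}, applies.

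I expect this boundary-condition bookkeeping to be the main obstacle: the reduction of the first paragraph is elementary, and all the polynomial, model-specific content sits in \eqref{upp} (ultimately in the massive harmonicity of the fermionic observable away from criticality), but one has to route the full-plane one-arm event through a domain in which the cluster of the origin is genuinely bounded by a free arc, since otherwise the available estimate is exponentially weaker than what is needed. If one prefers to argue directly through a dual circuit in the annulus $[-n,n]^2\setminus[-n/2,n/2]^2$ instead of a single vertical crossing, the gluing of the four rectangle crossings near the corners can be handled by Theorem~\ref{RSW critical} and the FKG inequality, and introduces no new idea.
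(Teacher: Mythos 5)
Your overall skeleton --- reduce the one-arm event by symmetry and a union bound over $O(n)$ boundary sites to a strip-crossing event, then invoke \eqref{upp} and pay a factor $n$, explaining the drop from $n^{-4}$ to $n^{-3}$ --- is the right shape, and the arithmetic $4(2n+1)C_2n^{-4}\le C_{10}n^{-3}$ is fine. The paper itself gives no proof of this lemma, simply citing Step~2 of the proof of Theorem~1 in \cite{\HVising}, so a direct comparison is limited; but the central difficulty you flag (reconciling the full-plane free measure $\phi_p$ with the Dobrushin strip measure $\phi_{S_n,p}^{-\infty,\infty}$, since \eqref{upp} really needs the free arc) is exactly where the non-triviality sits.

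That said, your first paragraph is wrong as written: ``this last probability'' there is $\phi_p(u\leftrightarrow in+\Z\text{ inside the strip})$, a statement about the infinite-volume free measure, whereas Lemma~\ref{boundary} identifies the observable with the connection probability under the \emph{Dobrushin} strip measure $\phi_{S_n,p}^{-\infty,\infty}$. These are different measures and the claimed equality does not hold; your second paragraph effectively retracts it, which is good, but it should not have been asserted. More seriously, the replacement arguments you sketch do not close the gap. The FKG comparison \eqref{comparison_between_boundary_conditions} cannot dominate $\phi_p$ restricted to the strip by $\phi_{S_n,p}^{-\infty,\infty}$: the Dobrushin boundary condition is not the maximal one (the wired top helps the event, but the free bottom hurts it, and the conditional law of $\phi_p$ in the strip carries random positive wirings on the bottom that push the probability the wrong way), so the inequality runs the wrong direction at the free arc. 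The ``confine between slightly-supercritical dual crossings'' device of Proposition~\ref{crossing weird} yields dual crossings with probability bounded away from zero, not probability $1-o(n^{-3})$, so one cannot just condition on them and keep the polynomial precision needed here. And exploring ``the outermost dual-open interface bounding the cluster of the origin'' is vacuous on the very event you want to bound: if $0\leftrightarrow\partial\Lambda_n$, there is no enclosing dual interface inside $\Lambda_n$ to find. So you have correctly isolated the genuine obstacle, but the resolution you propose is a gap rather than a proof; some additional idea (which the paper delegates to \cite{\HVising}) is needed to control the random wirings induced on the free-arc side of the strip.
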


\proofof{Theorem~\ref{thm:correlation length}}
Fix $C_{10}>0$ as defined in Lemma~\ref{lem:decay}. Theorem~\ref{RSW offcritical} implies the lower bound trivially. For the upper bound, it suffices to show that for any $\kappa>0$,
\[
\phi^1_{[-n,n]\times [-\kappa n,\kappa n],p}([-n, n]\times [-\kappa n,\kappa n]\text{ is crossed horizontally})~\rightarrow 0
\]
whenever $(n,p)\rightarrow(\infty,0)$ with $p\le p_c-C_{10}\log n/{n}$. Fix $\ep>0$ and $\kappa>0$. 

Take some $\delta>0$ to be fixed later. Let $A_n^{top}$ be the event that $[-(1-\delta)n,(1-\delta)n] \times [-\kappa n, (\kappa-2\delta)n]$ is crossed horizontally, and $A_n^{bottom}$ be the event that $[-(1-\delta)n,(1-\delta)n] \times [-(\kappa-2\delta) n, \kappa n]$ is crossed horizontally. Furthermore, let $B_n$ be the event that $[-(1-\delta)n,(1-\delta) n]\times[-(\kappa-\delta)n,(\kappa-\delta)n]$ contains a cluster of diameter $\delta n$. Notice that if the rectangle $[-n,n]\times [-\kappa n,\kappa n]$ is crossed horizontally, then $A_n^{top}$ or $A_n^{bottom}$ or $B_n$ occurs. 

Theorem~\ref{RSW critical} implies the existence of $\delta>0$ such that the probability of $A_n^{top}$ (and similarly for $A_n^{bottom}$) with wired boundary conditions is smaller than $\ep/3$ for any $p<p_{sd}$ and $n>0$. This will be our $\delta$.

Define $C_n$ to be the event that the annulus $$S_n:=[-n,n]\times [-\kappa n,\kappa n]~\setminus~[-(1-\delta)n,(1-\delta n)]\times[-(\kappa-\delta)n,(\kappa-\delta)n]$$ contains a closed circuit surrounding the inner box. Note that there exists $\eta>0$ such that 
$$\phi^1_{p,S_n}(C_n)~\ge~\eta\,,$$
thanks to Theorem~\ref{RSW critical} again. Since $C_n$ is decreasing and $B_n$ depends only on edges inside $[-(1-\delta)n,(1-\delta) n]\times[-(\kappa-\delta)n,(\kappa-\delta)n]$, we obtain
$$\phi^1_{p,[-n,n]\times [-\kappa n,\kappa n]}(C_n|B_n)~\ge~\phi_{p,S_n}^1(C_n)~\ge~\eta.$$
Therefore,
\begin{align*}
\eta \phi_{p,[-n,n]\times [-\kappa n,\kappa n]}^1(B_n)~&\le~\phi_{p,[-n,n]\times [-\kappa n,\kappa n]}^1(B_n\cap C_n)\\
~&\le~\phi_{p,[-n,n]\times [-\kappa n,\kappa n]}^1(B_n|C_n)\\
~&\le~\phi_{p,[-n,n]\times [-\kappa n,\kappa n]}^0(B_n).
\end{align*}
By a union bound, Lemma~\ref{lem:decay} and the definition of $C_{10}$ imply that
\begin{equation}\label{boom}\phi_{p,[-n,n]\times [-\kappa n,\kappa n]}^0(B_n)~\longrightarrow 0\quad\text{when }n\rightarrow 0.\end{equation} 
Therefore, $\phi_{p,[-n,n]\times [-\kappa n,\kappa n]}^1(B_n)\rightarrow 0$. 

Summarizing, each of  $A_n^{top}$, $A_n^{bottom}$ and $B_n$ has  probability less than $\ep/3$ for $n$ large enough, which  concludes the proof.

We now turn to the improvement for the case $\rho > 1$. The idea is that for $p<p_c-\frac{C_2 \log n}{n}$ the bound~(\ref{upp}) means that we are at the subcritical end of the critical window in a strong sense, from which we can deduce subcriticality in a weaker sense also at a larger $p$ value.

Let us assume that $\phi^0_p\big(\mathcal C_h([0,L]\times [0,\rho L])\big)=\eps>0$ for some $p<p_c$ and $L=\lambda/|p-p_c|$. Take $n=\frac{C}{|p-p_c|}\log\frac{1}{|p-p_c|}$, and consider horizontal crossings in the rectangle $[0,L]\times [0,n]$ with free boundary condition. By covering this rectangle by $L\times\rho L$ rectangles, with overlaps of size $L\times L$, the FKG inequality implies that 
$$\phi^0_p\big(\mathcal C_h([0,L]\times [0,n])\big)\geq \eps^{2n/L} = \eps^\frac{-2C \log |p-p_c|}{\lambda}=|p-p_c|^\frac{-2C \log \eps}{\lambda}\geq n^\frac{2C \log\eps}{\lambda}\,.$$
On the other hand, (\ref{upp}) implies that, for $C>0$ large enough in the definition of $n$, we have
$$\phi^0_p\big(\mathcal C_h([0,L]\times [0,n])\big)\leq n^{-3+o(1)}\,.$$
Comparing these two bounds gives $\lambda < 2C |\log\eps | /3$, yielding the required bound for the correlation length with free boundary conditions. The extension to wired (and hence arbitrary) boundary conditions follows exactly as in the above proof;  one should just change the definition of $B_n$ to denote the event that the rectangle $[-(1-\delta)n,(1-\delta)n] \times [-(\kappa-\delta)n,(\kappa-\delta)n]$ is crossed horizontally.
\qed

Let us now turn to the proof of Theorem~\ref{th.theta}. We have just proved that, for $\rho>0$ and $\ep>0$, there exists $c=c(\ep,\rho)$ such that for any 
$n\ge \frac{c}{p_c-p} \log\frac1{p_c-p}$, 
$$\phi_{p,[0,n]\times[0,\rho n]}^1\Big(\mathcal C_h([0,n]\times[0,\rho n])\Big)~\le~ \ep.$$
The next lemma asserts that crossing probabilities in fact converge to 0 very quickly as soon as $n$ is larger than the correlation length.

\begin{lemma}\label{exponential decay}
For any $p<p_c$, there exists $L(p)$ such that
$$\frac{c}{p_c-p}~\le~L(p)~\le~\frac{1}{c(p_c-p)}  \log \frac{1}{p_c-p}$$
and
$$\phi_{p,[0,2^kL(p)]\times[0,2^{k+1}L(p)]}^1 \Big( \mathcal{C}_h\big([0,2^kL(p)]\times[0,2^{k+1}L(p)]\big) \Big)~\le~\e^{-2^k}$$
for any $k\ge 0$.
\end{lemma}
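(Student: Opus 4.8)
The plan is to read $L(p)$ off the finite-size estimate just obtained, to observe that it already delivers the stated bound as long as $2^k$ is not too large, and to handle the remaining (large $k$) range by promoting that fixed-scale smallness to genuine exponential decay through a renormalisation (``bad block'') argument; the delicate point is that, in the random-cluster model, the quasi-independence of well-separated blocks must be manufactured by hand from the domain Markov property and the comparison of boundary conditions \eqref{comparison_between_boundary_conditions}. Concretely, fix a small $\ep>0$ and set $L(p):=\big\lceil C_*(p_c-p)^{-1}\sqrt{\log (p_c-p)^{-1}}\big\rceil$ with $C_*$ a large constant (larger than the constants $c(\ep,\rho)$ from the estimate preceding the lemma for the bounded aspect ratios $\rho$ one will use, and large enough that Lemma~\ref{lem:decay} --- whose polynomial exponent, on inspecting its proof, can be taken as large as one wishes at the cost of enlarging its constant --- applies at every scale $\ge L(p)$ with a prescribed exponent $\beta$). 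Then $\phi^1_{p,[0,n]\times[0,\rho n]}(\mathcal C_h)\le\ep$ and $\phi_p\big(0\leftrightarrow\partial[-n,n]^2\big)\le C n^{-\beta}$ for all $n\ge L(p)$. The right-hand bound of the lemma is then immediate; the left-hand bound $L(p)\ge c/(p_c-p)$ holds because $\sqrt{\log(p_c-p)^{-1}}$ is bounded below on $(0,p_c)$, and is in any case forced by Theorem~\ref{RSW offcritical}, which keeps crossing probabilities bounded away from $0$ on scales up to order $1/(p_c-p)$. (For $p$ bounded away from $p_c$ the assertion is classical subcritical decay with $L(p)$ a constant in the admissible window, so one may assume $p$ close to $p_c$.)

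Writing $R_k=[0,2^kL(p)]\times[0,2^{k+1}L(p)]$, a horizontal crossing of $R_k$ forces some vertex of its left side to be joined, within $R_k$, to $L^\infty$-distance $2^kL(p)$, so a union bound over the $2^{k+1}L(p)+1$ such vertices reduces matters to a connectivity estimate. For $2^k\le\log(1/\ep)$ the finite-size estimate gives directly $\phi^1_{p,R_k}(\mathcal C_h)\le\ep\le\e^{-2^k}$. For $2^k$ moderate --- up to a constant multiple of $\log\frac1{p_c-p}$, a range in which $\e^{-2^k}$ is still only polynomially small in the scale $2^kL(p)$ --- the polynomial one-arm bound at scale $n=2^kL(p)$ with $\beta$ large enough yields $\phi^1_{p,R_k}(\mathcal C_h)\le C\,(2^kL(p))^{1-\beta}\le\e^{-2^k}$, the linear-in-scale prefactor being absorbed into the exponent. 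The only genuinely substantial case is large $k$, where one needs honest exponential decay $\phi_p(0\leftrightarrow\partial[-m,m]^2)\le\delta^{\,m/L(p)}$ for some $\delta<1$: this is the bad-block renormalisation. Call a block $z+[0,L(p)]^2$ \emph{bad} if it is joined by an open path, inside its concentric $3\times3$ array of blocks, to the boundary of that array --- an increasing event whose probability under any wired conditioning is at most a constant times $\phi^1_{[0,L(p)]\times[0,3L(p)],p}(\mathcal C_h)\le\ep$. A long open connection forces a $\ast$-connected path of bad blocks of length $\asymp m/L(p)$ emanating from the origin; selecting a positive fraction of such a path with pairwise disjoint $3\times3$-neighbourhoods, revealing them one at a time, and dominating each conditional bad-probability by its wired value $\lesssim\ep$ (domain Markov plus comparison of boundary conditions), the probability of such a path is $\le\sum_{\text{paths}}\ep^{\,\Omega(m/L(p))}\le\delta^{\,m/L(p)}$ for $\ep$ small enough. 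One may also phrase this step dually, $\neg\mathcal C_h(R_k)$ being a crossing of the dual configuration, which at $p^*>p_c$ lies deep in the supercritical phase.

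The main obstacle is precisely this last step: converting a fixed-scale criterion into exponential-in-the-scale decay in the absence of independence. The finite-size estimate only controls \emph{wired}-boundary crossing and connection probabilities, so one has to reveal well-separated enlarged blocks serially and, at each step, invoke domain Markov together with the comparison of boundary conditions to dominate the conditional probability of the next block being bad by the controlled wired one, while arranging that the per-path bound $\ep^{\,\Omega(m/L(p))}$ beats the count $C^{\,m/L(p)}$ of $\ast$-paths. The secondary nuisances, all routine but requiring attention, are: the blocks meeting $\partial R_k$, where the wired boundary condition interferes with the definition of ``bad'' and which must be treated separately (or one first works in a slightly larger free box and transfers); the passage between the infinite-volume connectivity estimate and the wired finite-rectangle event in the union bound; and the order in which $\ep$, $\beta$ and $C_*$ are fixed so that $L(p)$ lands inside the window $[\,c/(p_c-p),\ c^{-1}(p_c-p)^{-1}\sqrt{\log(p_c-p)^{-1}}\,]$ while all single-scale inputs remain small.
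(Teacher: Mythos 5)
Your proposal takes a genuinely different route from the paper's, and both routes can in principle be made to work, but the paper's is substantially simpler. The paper proves the self-improving inequality $u_{2n}\le 25\,u_n^2$, where $u_n$ is the worse of the wired-boundary horizontal-crossing probabilities of the $n\times 2n$ and $n\times n$ boxes, by cutting the rectangle vertically and invoking comparison of boundary conditions, then iterates to get $25\,u_{2^kn}\le(25\,u_n)^{2^k}$; once Theorem~\ref{thm:correlation length} (with $\ep=1/100$) is used to choose $L(p)$ with $25\,u_{L(p)}\le 1/\e$, the whole lemma follows in two lines, with no renormalization, no one-arm input, no path counting, and no appeal to Lemma~\ref{lem:decay}. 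Your bad-block renormalization with serial revealing is a standard and valid alternative, but it is considerably heavier machinery for the same output.

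As written, your proposal also has soft spots beyond the ones you flag. The union bound over the $2^{k+1}L(p)+1$ vertices of the left side introduces a spurious $L(p)$ factor, which is exactly what forces the three-case split; if you instead union bound over the $2^{k+1}$ blocks in the leftmost column (the natural object at the block scale), the renormalization estimate of the form $2^{k+1}\,C^{2^k}\,\ep^{2^k/c}$ already beats $\e^{-2^k}$ for every $k\ge 0$ once $\ep$ is fixed small enough, and the intermediate case is no longer needed. That matters because your intermediate case is the weakest link: Lemma~\ref{lem:decay} bounds $\phi_p\big(0\leftrightarrow\partial[-n,n]^2\big)$ under the infinite-volume measure, and converting this into a bound on the wired finite-rectangle crossing $\phi^1_{p,R_k}(\mathcal C_h)$ requires the dual-circuit/free-enclosure argument that the paper carries out in the proof of Theorem~\ref{thm:correlation length}; you acknowledge this under ``secondary nuisances'' but do not do it, so that step is a genuine gap as stated. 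The remedy is either to remove the intermediate case via the block-level union bound, or to actually carry out the wired-to-free transfer. Notably, the paper's scale-doubling argument sidesteps every one of your listed nuisances --- boundary blocks, the wired/free passage, ordering of the constants $\ep$, $\beta$, $C_*$ --- which is a good reason to prefer it for this particular statement.
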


\proof
For $n>0$, let 
$$u_n:=\max\left\{\phi_{p,[0,n]\times[0,2n]}^1\Big(\CC_h([0,n]\times[0,2n])\Big)\,,~\phi_{p,[0,n]^2}^1\Big(\CC_h([0,n]^2)\Big)\right\}.$$ 
We are going to show that 
\begin{equation}\label{e.u2nun}
u_{2n}\leq 25 \, u_n^2\,.
\end{equation}

First, cutting vertically the domain $[0,2n]^2$ into two rectangles, together with comparison between boundary conditions, implies that
\begin{equation}\label{e.u2nunOne}
\phi_{p,[0,2n]^2}^1\Big(\CC_h([0,2n]^2)\Big) \leq \phi_{p,[0,n]\times[0,2n]}^1\Big(\CC_h([0,n]\times[0,2n])\Big)^2 \leq u_n^2\,.
\end{equation}

Second, cutting vertically the domain $[0,2n]\times[0,4n]$ into two, together with comparison between boundary conditions again, implies that
$$
\phi_{p,[0,2n]\times[0,4n]}^1\Big(\CC_h([0,2n]\times[0,4n])\Big) \leq \phi_{p,[0,n]\times[0,4n]}^1\Big(\CC_h([0,n]\times[0,4n])\Big)^2\,.
$$

Now, consider the rectangles 
\begin{align*}
R_1~&:=~[0,n]\times[0,2n]\\
R_2~&:=~[0,n]\times[n,3n]\\
R_3~&:=~[0,n]\times[2n,4n]\\
R_4~&:=~[0,n]\times[n,2n]\\
R_5~&:=~[0,n]\times[2n,3n]
\end{align*}
These rectangles have the property that whenever $[0,n]\times[0,4n]$ is crossed horizontally, at least one of the rectangles $R_i$ is crossed (in the horizontal direction for $R_1$, $R_2$ and $R_3$, and vertically otherwise). We deduce, using the comparison between boundary conditions, that
$$
\phi_{p,[0,n]\times[0,4n]}^1\Big(\CC_h([0,n]\times[0,4n])\Big)~\le~5\, u_n\,,
$$
and hence
\begin{equation}\label{e.u2nunTwo}
\phi_{p,[0,2n]\times[0,4n]}^1\Big(\CC_h([0,2n]\times[0,4n])\Big)~\le~(5\, u_n)^2\,.
\end{equation}

Combining \eqref{e.u2nunOne} and \eqref{e.u2nunTwo}, we obtain~\eqref{e.u2nun}. Iterating that, 
we easily obtain that, for every $k\ge 0$,
\begin{align*}
25\, u_{2^kn}~&\le~ \left(25\,u_n\right)^{2^k}.
\end{align*}
By Theorem~\ref{thm:correlation length}, if $p<p_c$ and 
$n\ge \frac{c^{-1}}{p_c-p} \log\frac1{p_c-p}$, where $c=\min \{c(1/100,2),c(1/100,1)\}$, then $u_n$ satisfies $$25\, u_n\le 1/\e\,.$$
Therefore, the lemma follows for 
$L(p)=\frac{c^{-1}}{p_c-p} \log\frac1{p_c-p}$.
\qed

\proofof{Theorem~\ref{th.theta}} Fix $p>p_c$. Let 
$$R_k~:=~[0,L(p)2^k]~\times~[-L(p),L(p)(2^{k+1}-1)]\quad\text{ if $k$ is even},$$
and $$R_k~:=~[0,L(p)2^{k+1}]~\times~[-L(p),L(p)(2^k-1)]\quad\text{ if it is odd}.$$ Define $E_k$ to be the event that $R_k$ is crossed in the long direction. The FKG inequality implies that
\begin{align*}\phi_p^0(0\leftrightarrow \infty)~&\ge~\phi_p^0\big(0\leftrightarrow \{L(p)\}\times[-L(p),L(p)]\big)\cdot\prod_{k\ge 0}\phi_p^0(E_k)\\
&\ge~\frac14\phi_p^0\big(0\leftrightarrow \partial[-L(p),L(p)]^2\big)\cdot \prod_{k\ge 0}\left(1-\e^{-2^k}\right)\\
&\ge~c\,\big(L(p)\big)^{-1/8}\,,
\end{align*}
where $c>0$. We used Lemma~\ref{exponential decay} to get the second line, and the lower bound of Theorem~\ref{thm:correlation length} and \eqref{1-arm} to get the third inequality.
\qed

\begin{remark}
While the first of the two Kesten's scaling relations in \cite{\KestenScaling} (namely \eqref{eq:KestenScaling}) was shown in the introduction to be wrong for the FK-Ising percolation, the second scaling relation, usually written under the following form
$$\mathbb P_p(0\leftrightarrow \infty)\asymp \mathbb P_p(0\leftrightarrow \partial[-L(p),L(p)]^2),$$
 should still be valid. Indeed, the thermodynamical quantities $L(p)$ and $\mathbb P_p(0\leftrightarrow \infty)$ have their analogues in the FK-Ising case. Onsager's determination of the magnetization, together with the Edwards-Sokal coupling implies that 
\begin{align}\label{e.thetafk}
 \phi_{p,2}(0 \leftrightarrow \infty)  & \asymp |p-p_c|^{1/8} 
\end{align}
and
\begin{equation}\label{one arm FK}\phi_{p_c,2}( 0\leftrightarrow  \partial [-n,n]^2) ~ \asymp~  n^{-1/8}.\end{equation}
From these two relations, the second scaling relation (which does not harness any pivotal event) implies that the correlation length should behave as $1/|p-p_c|$ for FK-Ising, which is the right prediction. Also note that \eqref{one arm FK} has been proved using conformal invariance techniques in \cite{\HIC}. It would be interesting to make sense of the second scaling relation in the FK-Ising case in order to provide a derivation of the exponent $1/8$ for the magnetization which would be independent of Onsager's computation. Half of this is achieved by Theorem~\ref{th.theta}.
\end{remark}

\section{Monotone coupling and near-critical behavior}\label{s.NC}

In this section, we first present briefly the monotone coupling of the random-cluster model introduced by Grimmett. We use it to explain heuristically why new edges should not appear in anything like a Poissonian way. We then prove that a self-organized mechanism does exist by proving that edges arrive in {\em clouds}. The proof is very weak and provide virtually no information on these clouds, which should be crucial for further understanding of the near-critical regime. We therefore conclude this section by listing few open questions about these clouds. 

\subsection{The monotone increasing Markov process on cluster configurations}\label{ss.coupling}

We now describe briefly Grimmett's monotone coupling (see \cite{\GrimmettCoupling, \GrimmettFK,\Bernoullicity} for a detailed exposition). Let $G=(V,E)$ be a finite subgraph of $\Z^2$ and $\Omega$ be the space $[0,1]^{E}$. The goal is to find a measure $\mu=\mu_G$ on $\Omega$ in a such a way that all the ``projections'' $\omega_p(Z)$ with $Z\sim \mu$, defined by
\begin{equation}\label{e.proj}
\omega_p(Z)(e):= 1_{Z(e)\le  p}\,,\qquad  p\in[0,1],\ e\in E\,,
\end{equation}
follow the random-cluster probability measure of parameters $(p,q)$ on $\{0,1\}^{E}$ with some given boundary conditions. It turns out that it is non-trivial to construct explicitly such a measure $\mu$ (note that in contrast the existence of abstract monotone couplings follows easily from a generalized Strassen's theorem). Instead, Grimmett obtains it as the invariant measure of a natural Markov process $Z_t$ on the space $\Omega:= [0,1]^{E}$.

Let $Z_t$ be a Markov chain on $\Omega$ where labels on the edges are updated at rate one according to the conditional law defined below. For any $e=\langle x, y \rangle \in E$, let $\mathcal{D}_e\subset \{0,1\}^{E}$ be the event that there is a path of open edges in $E \setminus \{ e \}$ 
 connecting $x$ and $y$. For any $e\in E$ and any $Z\in \Omega$, define 
\[
T_e(Z):= \inf \{ p\in [0,1] \text{ s.t. } \omega_p(Z) \in \mathcal{D}_e \}\,.
\]
Let $\mathcal{U}_e$ be the random variable corresponding to the new label at $e$ and time $t$ knowing the current configuration (before the update) $Z_{t-}$ given by the law 
\begin{equation}\label{e.Ue}
\Pb{\mathcal{U}_e \le p}:= \left \lbrace \begin{array}{ll} p & \text{ if $p\geq T$} \\
 \frac{p}{p+(1-p)q} & \text{ if $p<T$\,,} \end{array}\right.
\end{equation}
where $T=T_e(Z_{t-})$. The condition $q\geq 1$ implies that this is a valid distribution function, hence we can simply define $\mathcal{U}_e$ to be a sample from this distribution. Note that $\mathcal{U}_e$ has an absolutely continuous part plus a {\bf Dirac point mass} (for $q>1$) on $T$, namely $[T - \frac{T}{T+(1-T)q}] \delta_T$. 

Constructing an infinite-volume version of the previous dynamics is not straightforward. Nevertheless, one has the following asymptotic statement from \cite{\GrimmettCoupling, \GrimmettFK}.

\begin{proposition}[Infinite Volume Limit \cite{\GrimmettCoupling}]
For each $n\geq 1$, let $\Lambda_n:= [-n,n]^d$.
Let $\xi$ be some initial configuration in $X:=[0,1]^{E(\Z^2)}$. 
For $q\geq 1$, consider the above dynamics $Z_t^{\Lambda_n}$ on $\Lambda_n$ with {\bf free} boundary conditions and which starts from the initial state $Z_0^{\Lambda_n} \equiv \xi_{\md \Lambda_n}$. Then, as $n\to \infty$, the process $(Z_t^{\Lambda_n})$ weakly converges to a {Markov} process $(Z^{\free}_t)_{t\geq 0}$ which starts from the initial configuration $Z_0^\free = \xi$.  

Furthermore, as $t \to \infty$, $Z^\free_t$ weakly converges to an invariant measure $\mu$ on $X$. 

If, in the limiting procedure, one uses {\bf wired} boundary conditions instead, one obtains at the limit a Markov process $(Z^{\wired}_t)_{t\geq 0}$. The processes $Z^\wired_t$ and $Z^\free_t$ might possibly have 
different transition kernels but they both have the same $\mu$ as the {\bf unique} invariant measure.  For $Z\sim\mu$, the projections $\omega_p(Z)$ given by~(\ref{e.proj}) have the law of $\mathrm{FK}(p,q)$, $p\in[0,1]$.
\end{proposition}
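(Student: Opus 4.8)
The plan is to follow the standard strategy of establishing weak convergence of Markov processes via convergence of finite-dimensional distributions plus tightness, exploiting the monotonicity built into Grimmett's dynamics and the monotonicity in the boundary conditions. First I would fix a finite edge set $F \subset E(\Z^2)$ and a time $t$, and study the law of $\bigl(Z_t^{\Lambda_n}(e)\bigr)_{e \in F}$. The key observation is a \emph{coupling from the past}-type argument: because each label $\mathcal U_e$ at an update is a deterministic function of the value $T_e(Z_{t-})$ and of an independent auxiliary uniform random variable (the source of randomness in the distribution \eqref{e.Ue}), one can realize all the processes $Z_t^{\Lambda_n}$, for all $n$ simultaneously and for both choices of boundary condition, using one common family of Poisson clocks on $E(\Z^2)$ and one common family of auxiliary uniforms. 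Under this simultaneous coupling, monotonicity in $n$ and in the boundary condition (wired dominates free, and larger domains with wired b.c.\ dominate smaller ones, etc., all following from the comparison \eqref{comparison_between_boundary_conditions} applied update-by-update) gives that $Z_t^{\Lambda_n}$ is stochastically monotone in $n$ for each fixed boundary rule. Hence the projections $\omega_p(Z_t^{\Lambda_n})$ converge, and by a sandwiching/diagonal argument the full labels $Z_t^{\Lambda_n}$ converge in distribution to a limit, which we call $Z_t^{\free}$ (resp.\ $Z_t^{\wired}$).

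Next I would verify that the limit is itself Markov and has the expected transition kernel. The point is that the finite-volume generator acts \emph{locally}: the update at an edge $e$ depends on the current configuration only through the connectivity event $\mathcal D_e$, i.e.\ through $T_e(Z)$, which is determined by $Z$ restricted to $E(\Lambda_n)\setminus\{e\}$. The one genuinely infinite-volume subtlety is that $T_e$ involves the existence of a connecting path that could, a priori, wander far; but for $p$ slightly below $p_c$ (and in fact for all $p$ by FKG comparison with the subcritical/supercritical phases away from $p_c$, using uniqueness of the infinite cluster) connectivity events are determined, up to negligible probability, by a bounded neighborhood, so that $T_e(Z^{\Lambda_n}_t) \to T_e(Z^{\free}_t)$ along the coupling. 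This continuity, together with the Feller-type estimates coming from the fact that in any finite time window only finitely many clocks ring in any finite region (so the process is a well-defined interacting particle system by the standard construction, cf.\ \cite{\IPS}), yields the Markov property of $(Z^{\free}_t)_{t\ge 0}$ and $(Z^{\wired}_t)_{t\ge 0}$, with the transition mechanism described by \eqref{e.Ue}.

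For the convergence as $t\to\infty$ to an invariant measure $\mu$: the finite-volume chain $Z_t^{\Lambda_n}$ is an irreducible, aperiodic (indeed reversible) finite-state-space-type Markov chain on a compact space, so it has a unique invariant measure $\mu_{\Lambda_n}$, and $Z^{\Lambda_n}_t \to \mu_{\Lambda_n}$ as $t\to\infty$; moreover by construction the projections $\omega_p$ of $\mu_{\Lambda_n}$ are the random-cluster measures $\phi^0_{\Lambda_n,p,q}$, which converge as $n\to\infty$ to $\phi^0_{p,q}$. Using again the simultaneous monotone coupling and interchanging the limits $n\to\infty$ and $t\to\infty$ (justified by the stochastic monotonicity, which makes both limits monotone and hence interchangeable), one gets that $Z^{\free}_t$ converges to a measure $\mu$ whose $p$-projection is $\phi^0_{p,q}$ for every $p$; by definition $\mu$ is invariant for the limiting dynamics. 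Finally, the uniqueness of $\mu$ as the invariant measure, for \emph{either} limiting dynamics (free or wired), follows by the same sandwiching: any invariant measure $\nu$ must have $p$-projections that are invariant for the infinite-volume heat-bath dynamics on $\{0,1\}^{E(\Z^2)}$; by monotonicity started from the all-closed and all-open configurations one sandwiches $\nu$ between the free and wired infinite-volume random-cluster measures, and at every $p\ne p_c$ — and at $p=p_c(2)$ by the known uniqueness of the FK-Ising infinite-volume measure, see \cite{\WWIsing} — these coincide, forcing all $p$-projections of $\nu$ to agree with those of $\mu$; since $\nu$ is determined by its projections (the map $Z\mapsto(\omega_p(Z))_{p}$ is essentially a bijection onto monotone families), $\nu=\mu$.

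I expect the main obstacle to be the justification that the limiting object is a genuine \emph{Markov} process with the stated local transition kernel — in particular controlling the event $\mathcal D_e$ in infinite volume (that a given edge's update depends only on a finite, stochastically controlled neighborhood) uniformly enough to pass to the limit — and the associated interchange of the $n\to\infty$ and $t\to\infty$ limits. Both are handled by the simultaneous monotone coupling, but making the continuity of $T_e$ under the coupling precise, and ruling out pathological "information coming from infinity", is the delicate point; away from $p_c$ it is routine via exponential decay / uniqueness of the infinite cluster, and at $p_c$ for $q=2$ one invokes the special knowledge that the FK-Ising model has a unique infinite-volume Gibbs measure.
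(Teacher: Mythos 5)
The paper does not reprove this proposition: it is stated as a citation to Grimmett's work \cite{\GrimmettCoupling}, and the very next sentence in the paper flags the precise point at which your proposal goes wrong --- ``The underlying dynamics here is \textbf{non-Fellerian}, and the limiting Markov process in the above theorem is derived from the monotonicity properties inherent to the dynamics.'' You correctly identify monotone sandwiching (in $n$ and in the boundary condition, using a common source of randomness) as the main tool --- that is indeed how Grimmett constructs the limiting process --- but you then also invoke ``Feller-type estimates'' and ``the standard construction'' of interacting particle systems, cf.\ \cite{\IPS}, to get the Markov property. That machinery is not available here: the update rule at an edge $e$ depends on whether its endpoints are connected in $\omega\setminus\{e\}$, a genuinely non-local, discontinuous function of the configuration in the product topology, so the formal generator is not Feller. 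Your auxiliary claim that ``connectivity events are determined, up to negligible probability, by a bounded neighborhood'' is false at and near $p_c$: the entire content of Section~2 of the paper (RSW bounds, correlation length $\asymp|p-p_c|^{-1}$) is that critical and near-critical connectivities are long-range, and the coupling process $Z$ simultaneously encodes all $p\in[0,1]$, including the critical window. The paper then deliberately declines to ``assume any explicit transition rule for the infinite-volume dynamics'' --- precisely because the identification of the limiting Markov process with a local generator is not established.

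A second gap is in your uniqueness argument. You show that any invariant $\nu$ has $p$-marginals sandwiched between free and wired random-cluster measures and hence agreeing with those of $\mu$, and conclude $\nu=\mu$ because the map $Z\mapsto(\omega_p(Z))_{p}$ is essentially a bijection. But agreement of the marginal law of $\omega_p(Z)$ for each \emph{individual} $p$ does not determine the \emph{joint} law of the monotone family $(\omega_p(Z))_{p\in[0,1]}$; two measures on $X=[0,1]^{E(\Z^2)}$ can have identical one-$p$ marginals for every $p$ and still differ. Moreover, the proposition asserts uniqueness of $\mu$ for \emph{all} $q\geq 1$, where uniqueness of the random-cluster measure at $p_c(q)$ is not known and indeed fails for large $q$; an argument that leans on uniqueness of $\phi_{p_c,q}$ (as your invocation of \cite{\WWIsing} does) cannot be the right one at this level of generality. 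The correct mechanism, following Grimmett, is again a monotone sandwiching of the coupling measures themselves --- not merely of their $p$-projections --- between the dynamics started from the extremal states ${\bf 0}$ and ${\bf 1}$.
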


Let us now prove that Grimmett's coupling leads to a monotone increasing {\bf Markov} process (as $p$ varies) on the cluster configurations, {\em i.e.}, on the space $\{0,1\}^{E}$, providing a clear picture of the self-organization scheme near $p_c(q)$. Namely, as one raises $p$ near $p_c$, new edges arrive in a complicated fashion yet depending only on the current configuration $\omega_p$. We are not aware of a proof of this Markovianity elsewhere in the literature.

\begin{proposition}\label{pr.MonotoneMarkov}
Let $G=(V,E)$ be a finite subgraph of $\mathbb Z^2$. Let $Z$ be sampled according to the law $\mu$. Then the monotone family of projections
$(\omega_p(Z))_{0\le p \le 1}$, seen as a random process in the ``time'' variable $p$, 
is a non-decreasing {\bf inhomogeneous Markov process} on the space $\{0,1\}^{E}$. 
\end{proposition}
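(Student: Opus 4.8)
\medbreak\noindent\textbf{Proof proposal.} The plan is to recast the claim as a conditional independence property of $\mu$ and then to read it off from the one-edge conditional laws of $\mu$. Write $\mathcal F_p:=\sigma\big((\omega_s(Z))_{s\le p}\big)$ for the ``past'' and $\mathcal G_p:=\sigma\big((\omega_{p'}(Z))_{p'\ge p}\big)$ for the ``future'' of the process at $p$. For a fixed sample $Z$, knowing the trajectory $(\omega_s(Z))_{s\le p}$ amounts to knowing, for every edge $e$, the exact value $Z(e)$ when $Z(e)\le p$ and only the bit ``$Z(e)>p$'' otherwise; hence $\mathcal F_p=\sigma\big(\omega_p(Z),(Z(e))_{e\in\omega_p(Z)}\big)$ and, symmetrically, $\mathcal G_p=\sigma\big(\omega_p(Z),(Z(e))_{e\notin\omega_p(Z)}\big)$, with $\omega_p(Z)$ measurable with respect to each. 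Consequently $(\omega_p(Z))_{0\le p\le1}$ --- which is non-decreasing by construction --- is a Markov process if and only if, for every $p$ and every $\eta\in\{0,1\}^{E}$, under $\mu$ conditioned on $\{\omega_p(Z)=\eta\}$ the families of labels $(Z(e))_{e\in\eta}$ and $(Z(e))_{e\notin\eta}$ are independent. So the whole statement reduces to this conditional independence.

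To prove it, recall that $\mu$ is the invariant measure of the heat-bath dynamics $(Z_t)$, which resamples each coordinate $Z(e)$, $e=\langle x,y\rangle$, from the law \eqref{e.Ue} with $T=T_e(Z)=\inf\{s:\ x\leftrightarrow y\text{ in }\omega_s(Z)\setminus\{e\}\}$; equivalently $T_e(Z)=\min_{\pi}\max_{e'\in\pi}Z(e')$ over paths $\pi$ from $x$ to $y$ avoiding $e$, a quantity depending only on $(Z(e'))_{e'\ne e}$. As is standard for heat-bath dynamics (and as is built into Grimmett's construction in \cite{\GrimmettCoupling}), this forces the conditional law of $Z(e)$ given $(Z(e'))_{e'\ne e}$, under $\mu$, to be exactly that kernel. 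Now fix $p$ and $\eta$ and condition on $\{\omega_p(Z)=\eta\}=\{Z(e)\le p\ \forall e\in\eta\}\cap\{Z(e)>p\ \forall e\notin\eta\}$. For $e\in\eta$ the conditional law of $Z(e)$ given the remaining labels is then \eqref{e.Ue} restricted to $[0,p]$ and renormalised; on that interval the kernel depends on $T_e$ only through the indicator $\{T_e\le p\}$ and, when $T_e\le p$, through the exact value of $T_e$, and --- because a bottleneck-realising path contained in $\omega_p(Z)$ uses no edge outside $\eta$ --- both of these are functions of $\eta$ and of $(Z(e'))_{e'\in\eta\setminus\{e\}}$ alone. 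Symmetrically, for $e\notin\eta$ the conditional law of $Z(e)$ is \eqref{e.Ue} restricted to $(p,1]$; there it depends on $T_e$ only through $\{T_e\le p\}$ and, when $T_e>p$, through the exact value of $T_e$, which then equals the largest label of a $(>p)$-edge on the relevant bottleneck path and is hence a function of $\eta$ and of $(Z(e'))_{e'\notin\eta,\, e'\ne e}$ alone. Thus, conditionally on $\{\omega_p(Z)=\eta\}$, the one-edge full-conditionals never couple the block $\eta$ to the block $E\setminus\eta$, so by the usual uniqueness of a Gibbs measure with positive densities on a finite graph (equivalently, uniqueness of the invariant measure of the corresponding heat-bath sampler, applied now to each block separately) $\mu(\,\cdot\mid\omega_p(Z)=\eta)$ is the product of a measure on $[0,p]^{\eta}$ and a measure on $(p,1]^{E\setminus\eta}$. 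This is precisely the required conditional independence, and since the transition kernels manifestly depend on $p$, the chain is inhomogeneous.

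The step I expect to be the crux is the pair of ``bottleneck'' computations above: one must verify that, once the level-$p$ configuration $\eta$ is fixed, the only feature of the resampling parameter $T_e$ that survives in the $[0,p]$-part (respectively the $(p,1]$-part) of the kernel \eqref{e.Ue} is a function of $\eta$ together with the labels inside $\eta$ (respectively outside $\eta$) only. This is exactly where the specific shape of the heat-bath rule enters: its two branches are governed precisely by whether the endpoints of $e$ are already connected at the level in question, which is what makes the cut between ``low'' and ``high'' labels clean. Everything else is bookkeeping, together with the (standard, but slightly delicate because of the Dirac masses in \eqref{e.Ue} when $q>1$) fact that on a finite graph the system of one-edge conditionals pins down the joint law, and a routine check of the degenerate endpoints $p\in\{0,1\}$.
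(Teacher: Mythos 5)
Your reduction of the Markov property to the conditional independence, given $\omega_p(Z)=\eta$, of the labels inside $\eta$ and the labels outside $\eta$ is exactly the paper's Lemma~\ref{l.MonotoneMarkov}, so you and the paper start from the same place. Moreover, the computational crux you identify --- that the $[0,p]$-conditioned kernel depends on $T_e$ only through $T_e\wedge p$, which is a function of the labels inside $\eta$, and dually for the $(p,1]$-conditioned kernel --- is precisely the observation the paper records in the parenthetical remark at the end of its proof (``an important feature here is that if $T_e>p$, then the conditional law $\mathcal{L}[\,\mathcal{U}_e\mid\mathcal{U}_e\le p\,]$ does not depend on the exact value of $T_e$\ldots''). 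Where you and the paper part ways is in the mechanism that turns this local observation into a statement about $\mu$. You first assert that the one-edge conditional of $\mu$ is given by the update kernel \eqref{e.Ue} (invoking the standard theory of Gibbs samplers), condition on $\{\omega_p(Z)=\eta\}$ to restrict these kernels to $[0,p]^\eta\times(p,1]^{E\setminus\eta}$, note that the restricted one-edge conditionals never couple the two blocks, and then invoke uniqueness of a finite-graph Gibbs measure with a given system of full-conditionals. The paper instead never identifies $\mu$'s one-edge conditionals: it builds an auxiliary Markov chain $(\omega_t,Z^{\le p}_t,Z^{>p}_t)$ whose first coordinate is the usual heat-bath chain for $\phi^0_{G,p,q}$ and whose second and third coordinates are driven by mutually independent extra randomness, checks (using exactly your kernel observation) that the induced chain on $Z_t$ is Grimmett's chain, and concludes by uniqueness of Grimmett's invariant measure. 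The two routes buy slightly different things. Yours is shorter and more declarative, but it presupposes the consistency of the specification $\{\mathcal{U}_e\}_e$ under $\mu$ --- the fact that the stationary measure of a heat-bath sampler has the sampling kernels as its one-edge conditionals is equivalent to consistency, which in turn is something Grimmett's construction delivers but that you invoke rather than rederive. It also needs a small extra argument that a finite system of one-edge conditionals with a moving atom (the Dirac mass at $T_e$ for $q>1$) still pins down a unique joint law; you flag this but do not carry it out. The paper's route sidesteps both of these: it only uses that Grimmett's chain has a unique invariant measure, which is already part of the setup, and the conditional independence falls out of the explicit construction by independence of the driving noise. The price is the bookkeeping of the triple $(\omega_t,Z^{\le p}_t,Z^{>p}_t)$. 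I would regard both proofs as correct, with yours being a legitimate and somewhat more transparent variant, provided the ``conditionals of $\mu$ are the update kernels'' step is either proved or located precisely in \cite{\GrimmettCoupling} rather than waved at.
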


\proof
We wish to prove that conditioned on the projections $(\omega_u(Z))_{0\le u \le p}$, the conditional
law of the higher configurations  $(\omega_u(Z))_{p\le u \le 1}$ depends only on $\omega_p(Z)$. To achieve this, it is enough to prove Lemma~\ref{l.MonotoneMarkov} below. Before stating the lemma, we introduce some notation. For $p\in[0,1]$, decompose the configuration $Z$ into the triple $(\omega_p, Z^{\le p}, Z^{>p})$ defined as
\begin{equation*}
\omega_p = \omega_p(Z_\Lambda);
\hskip 0.45 in
Z^{\le p} =\begin{cases}
 Z& \text{if } Z \le p \\
1 & \text{otherwise} 
\end{cases};
\hskip 0.45 in
Z^{>p} = \begin{cases}
 Z & \text{if } Z> p \\
 0 & \text{otherwise} 
\end{cases}
.
\end{equation*}
Note that 
\begin{equation}
\label{e.omegaZZ}
\omega_p=\omega_p(Z^{\le p})=\omega_p(Z^{>p})\,,
\end{equation}
and that $Z$ can be recovered from the triple $(\omega_p, Z^{\le p}, Z^{>p})$.
\begin{lemma}\label{l.MonotoneMarkov}
 Conditioned on the value of the first component $\omega_p$, the two other components $Z^{\le p}$  and  $Z^{>p}$ are conditionally independent.
\end{lemma}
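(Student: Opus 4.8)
The key structural fact is the one recorded in \eqref{e.Ue}: when the clock of an edge $e$ rings at some time $t$, the new label $\mathcal{U}_e$ depends on the current configuration $Z_{t-}$ \emph{only} through the threshold $T_e(Z_{t-})$, which is itself a measurable function of the projection $\omega_{T}(Z_{t-})$ for any $T$ — more precisely, whether $p\ge T_e(Z_{t-})$ is exactly the event $\omega_p(Z_{t-})\in\mathcal{D}_e$, an event depending on $\omega_p$ alone. My plan is to exploit this through a reversibility/stationarity argument, reducing Lemma~\ref{l.MonotoneMarkov} to a statement about the finite-volume Grimmett dynamics $(Z_t)$ run to stationarity. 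Since $\mu$ is the invariant measure, it suffices to verify the claimed conditional independence for $Z_t$ at any fixed large time $t$ (or directly in equilibrium); and because $E$ is finite, one can even hope to prove it for the dynamics started from an arbitrary deterministic configuration and then pass to the limit, but the cleanest route is to work with the stationary chain directly.

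\textbf{Main steps.}
First I would fix $p\in[0,1]$ and consider the decomposition $Z=(\omega_p,Z^{\le p},Z^{>p})$ together with the observations \eqref{e.omegaZZ}. The goal is: under $\mu$, $Z^{\le p}\perp Z^{>p}\mid \omega_p$. Second, I would run the stationary Grimmett chain $(Z_s)_{s\ge 0}$ with $Z_0\sim\mu$, and track the three-component process $(\omega_p(Z_s),Z_s^{\le p},Z_s^{>p})$. The crucial point is that this is \emph{itself} a Markov chain in which the dynamics of $Z_s^{\le p}$ and of $Z_s^{>p}$ are driven by the same clocks but, \emph{conditionally on the trajectory of $\omega_p(Z_s)$}, evolve independently: when edge $e$ rings, the new value $\mathcal{U}_e$ is determined by $T_e$ (a function of $\omega_p(Z_{s-})$) plus fresh independent randomness, and then $Z^{\le p}(e)$ is set to $\min(\mathcal{U}_e\vee p,1)$-type truncation while $Z^{>p}(e)$ is set to the complementary truncation — both as \emph{deterministic functions of $(\mathcal{U}_e,\,$old $\omega_p$-value$)$}, using that the update of $\omega_p(e)$ itself is $1_{\mathcal{U}_e\le p}$. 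So, conditionally on the whole trajectory $(\omega_p(Z_s))_s$, the two processes $Z^{\le p}$ and $Z^{>p}$ are built from disjoint pieces of the i.i.d.\ update randomness, hence conditionally independent at every fixed time. Third, I would invoke stationarity: the conditional law at time $s$ is the same as at time $0$, and letting $s\to\infty$ (or just using $s=0$ after verifying the claim for the stationary chain, which is immediate once one knows it for the chain started from $\mu$ — this is circular, so instead one should start the chain from a point mass, prove the conditional independence at time $s$ by the trajectory argument, and then let $s\to\infty$ so that $Z_s\Rightarrow\mu$, carrying the conditional independence to the limit by a weak-convergence argument on the compact space $[0,1]^E$).

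\textbf{The delicate point.}
Passing conditional independence through a weak limit is the one step that needs real care: conditional independence given a random variable is \emph{not} automatically preserved under weak convergence. The way around this is to phrase the statement as a family of unconditional identities: $Z^{\le p}\perp Z^{>p}\mid\omega_p$ is equivalent to saying that for all bounded continuous $f,g$ and all bounded continuous $h$,
\[
\E\big[f(Z^{\le p})\,g(Z^{>p})\,h(\omega_p)\big]\;\E\big[h(\omega_p)\big]
\;=\;\E\big[f(Z^{\le p})\,h(\omega_p)\big]\;\E\big[g(Z^{>p})\,h(\omega_p)\big]
\]
whenever one works with the conditional law given each atom of $\omega_p$; since $\omega_p$ takes finitely many values, this is just finitely many bilinear identities in the law of $Z$, and each is a continuous functional of the law (note $Z\mapsto\omega_p(Z)$ is a.s.\ continuous under $\mu$ because the Dirac masses of $\mathcal{U}_e$ sit at the random points $T_e$, so $\mu(Z(e)=p)=0$ for fixed $p$). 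Hence these identities pass to the weak limit. So the skeleton is: (i) prove the bilinear identities for the finite-time chain via the ``disjoint randomness conditionally on the $\omega_p$-trajectory'' argument; (ii) check the continuity/no-atom condition guaranteeing $\omega_p$ is a.s.-continuous; (iii) take $s\to\infty$. I expect step (i) — cleanly isolating which bits of the update randomness feed into $Z^{\le p}$ versus $Z^{>p}$ conditionally on $\omega_p$, and confirming it really is a deterministic split — to be the main obstacle, essentially a careful bookkeeping of the coupling in \eqref{e.Ue}; everything else is soft.
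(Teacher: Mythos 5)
Your proposal follows the same broad skeleton as the paper — build a Markov process for the triple $(\omega_p,Z^{\le p},Z^{>p})$, argue that $Z^{\le p}$ and $Z^{>p}$ are driven by disjoint randomness conditionally on the $\omega_p$-trajectory, and pass to the invariant measure — but there is a genuine gap in the central step, and it is precisely the point that the paper handles with care.

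You assert that when edge $e$ rings, ``the new value $\mathcal{U}_e$ is determined by $T_e$ (a function of $\omega_p(Z_{s-})$) plus fresh independent randomness.'' This is false: $T_e(Z) = \inf\{u : \omega_u(Z)\in\mathcal D_e\}$ depends on the \emph{entire} labelled configuration $Z|_{E\setminus\{e\}}$ (you need to know at which level $u$ the two endpoints of $e$ become connected off $e$), not just on the single snapshot $\omega_p$. What is a function of $\omega_p$ alone is only the \emph{event} $\{T_e\le p\}$, as you correctly note earlier in your plan, but not the value of $T_e$. Since $T_e$ can lie strictly below $p$ (so its exact value is encoded in $Z^{\le p}$) or strictly above $p$ (so its exact value is encoded in $Z^{>p}$), sampling a single $\mathcal{U}_e$ with input $T_e$ introduces an explicit dependence between the $Z^{\le p}$- and $Z^{>p}$-sides that is \emph{not} mediated by $\omega_p$, and your ``disjoint randomness'' conclusion breaks down at exactly this point.

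The observation that makes the argument work, and which your proposal is missing, is that the conditional law of $\mathcal{U}_e$ given $\{\mathcal{U}_e\le p\}$ depends on $T_e$ only through $T_e\wedge p$ (when $T_e>p$, this conditional law is one fixed distribution on $[0,p]$, independent of the exact value of $T_e$), and symmetrically the conditional law given $\{\mathcal{U}_e>p\}$ depends on $T_e$ only through $T_e\vee p$. The paper exploits this by defining the truncated thresholds $T_e^{\le p}(Z^{\le p})$ (measurable w.r.t.\ $Z^{\le p}$ alone) and $T_e^{>p}(Z^{>p})$, introducing \emph{two separate} update variables $\mathcal{U}_e^{\le p}$ and $\mathcal{U}_e^{>p}$ driven by these truncated thresholds and mutually independent fresh randomness, and then verifying that the resulting process is still Grimmett's chain. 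Without this ``factorization through the truncation'' step, you cannot legitimately claim that the $Z^{\le p}$- and $Z^{>p}$-updates use disjoint randomness conditionally on $(\omega_p(Z_s))_s$. Your attention to the weak-limit technicality and the no-atom condition is reasonable, but it addresses a secondary issue; the primary issue is the one above, and the proposal does not fix it.
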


\proofof{the lemma} Fix $p\in [0,1]$ and omit it from the notation $\omega=\omega_p$ to make space for a time variable $t$. 

We basically follow the construction of the measure $\mu$ as the limiting measure of the Markov process $Z_t$, except that we divide the randomness used along the Markov chain into three components, the second and third being independent conditionally on the first one. Namely, define a Markov process 
$$(\omega_t, Z^{\le p}_t, Z^{>p}_t)_{t\geq 0} \in \{0,1\}^{E(\Lambda)} \times [0,1]^{E(\Lambda)} \times [0,1]^{E(\Lambda)},$$ 
where edges are updated at rate one, in such a way that the relations (\ref{e.omegaZZ}) between the three coordinates hold for all $t\geq 0$. To be consistent at $t=0$, the process starts either from the empty state $(\omega_0,Z^{\le p}_0,Z^{> p}_0)\equiv ({\bf 0},{\bf 1},\bf{1})$ or the full state $({\bf 1},{\bf 0},{\bf 0})$, where ${\bf 0}$ and ${\bf 1}$ denote the vectors all 0 and all 1 respectively. Then, instead of sampling $\mathcal{U}_e$ directly, let us proceed stepwise: first look whether $\omega_{t-}$ satisfies $\mathcal{D}_e$ or not. If it does, then 
let $\omega_t(e):=1$ with probability $p$. If $\omega_{t-} \notin \mathcal{D}_e$, then let $\omega_t(e):=1$ with probability $p/(p+(1-p)q)$.
This is exactly the heat-bath dynamics for $\phi^0_{G,p,q}$. Note that this part of the dynamics does not use at the two components $(Z^{\le p}, Z^{>p})$.

Let us describe how to update the component $Z^{\le p}_t$. If, after the update, $\omega_t(e)$ equals 0, then we fix $Z^{\le p}_t(e):= 1$. Otherwise (if $\omega_t(e)=1$), we use the following variable:
\[
T^{\le p}_e(Z^{\le p}) := \inf \{ u \in [0,p] : \omega^u(Z^{\le p}) \in \mathcal{D}_e \}\,.
\]
Note that $T^{\le p}_e(Z^{\le p})=T_e(Z)$ on the event $T_e(Z)\leq p$. Otherwise ({\em i.e.} $\omega_p(Z) \notin \mathcal{D}_e$), we set $T_e^{\le p} = p$. In either case, it is important here that no information about the third component $Z^{>p}$ has been used. 

Next, recall the update random variable $\mathcal{U}_e$ from the previous subsection (see \eqref{e.Ue}). It needed as an input the value of $T_e(Z_{t-})$. Let $\mathcal{U}_e^{\le p}$ be the same random variable here, with input the value of $T^{\le p}_e(Z_{t-}^{\leq p})$. Remembering that we are in the case $\omega_t(e)=1$,
update the value of $Z^{\le p}_t$ as follows, independently of everything:
\[
Z_t^{\le p}(e) \sim \mathcal{L} \bigl[ \mathcal{U}^{\le p}_e \,\big|\, \mathcal{U}^{\le p}_e \le p \bigr]\,,
\] 
where $\mathcal{L}$ stands for the law of the variable. We define $(Z^{>p}_t)$ in the same fashion, using $\mathcal{U}^{>p}_e$. In particular, the evolutions of $(Z^{\le p}_t)$ and $(Z^{> p}_t)$ are sampled out of the evolution of $(\omega_t)$ plus some randomness in each case that are independent of each other, hence the conditional independence of $Z^{\le p}$ and $Z^{>p}$ is satisfied. 

To conclude the proof, one just has to notice that if one defines
\begin{align*}
Z_t &:= \left\lbrace \begin{array}{ll} 
Z^{\le p}_t & \text{if } \omega_t(e)=1 \\
Z^{>p}_t & \text{else}\,,
\end{array}\right.
\end{align*}
then $(Z_t)_{t\geq 0}$ is exactly the Markov chain which was considered by Grimmett in \cite{\GrimmettCoupling}. 
(This is not hard to check; an important feature here is that if $T_e>p$, then the conditional law $\mathcal{L} \Bigl[ \mathcal{U}_e \md \mathcal{U}_e \le p \Bigr]$
does not depend on the exact value of $T_e$, and a similar thing holds for $\mathcal{U}^{>p}_e$ when $T_e\le p$.). In particular, from \cite{\GrimmettCoupling}, it converges to the 
unique invariant measure $\mu_\Lambda$, which inherits its conditional independence property. This finishes the proof of Lemma~\ref{l.MonotoneMarkov} and hence of Proposition~\ref{pr.MonotoneMarkov}. \QED

\begin{proposition}
This Markovian property extends to the infinite volume limit $\mu$ on $X = [0,1]^{E(\Z^2)}$. 
\end{proposition}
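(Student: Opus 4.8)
The plan is to obtain the infinite-volume statement as a limit of the finite-volume Proposition~\ref{pr.MonotoneMarkov}. Recall that $\mu$ is the weak limit of the finite-volume invariant measures $\mu_{\Lambda_n}$ with free boundary conditions (suitably extended to $X=[0,1]^{E(\Z^2)}$): with free boundary conditions these are stochastically increasing in $n$, and their limit is $\mu$ — this is part of Grimmett's construction, and can also be read off from the infinite-volume limit result above by interchanging the limits $n\to\infty$ and $t\to\infty$, which is legitimate by monotonicity of the dynamics in the box size. Since $X$ is compact, $E(\Z^2)$ is countable, and the law of each coordinate $Z(e)$ under $\mu$ has at most countably many atoms, for all but countably many values of $p$ the set $\{Z:Z(e)=p\ \text{for some }e\}$ is $\mu$-negligible; for such ``good'' $p$ the maps $Z\mapsto\omega_p(Z)$, $Z\mapsto Z^{\le p}$ and $Z\mapsto Z^{>p}$, read on any finite window of edges, are $\mu$-almost everywhere continuous, so the finite-dimensional laws of these quantities under $\mu_{\Lambda_n}$ converge to those under $\mu$. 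The task is then to transport to $\mu$ the conditional independence of Lemma~\ref{l.MonotoneMarkov} (equivalently, the Markov property of $(\omega_p(Z))_p$) at every good $p$, and finally to remove the restriction to good $p$.

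For the first part I would not transport the conditional independence as a property of the measures, but rather transport the three-component \emph{dynamical coupling} built in the proof of Lemma~\ref{l.MonotoneMarkov}. Run on $\Lambda_n$ the coupled dynamics $(\omega_t,Z^{\le p}_t,Z^{>p}_t)$, fixing once and for all — independently of $n$ — three mutually independent families of driving variables: the clocks together with the heat-bath coins driving $\omega_t$, the coins driving the lower component, and the coins driving the upper component. By construction the lower and upper components are, for every $n$ and every $t$, deterministic functionals of the $\omega$-trajectory together with their own coin family, hence conditionally independent given the $\omega$-randomness. One then lets $n\to\infty$ (by the same monotonicity that produces $Z^{\free}_t$) and $t\to\infty$, so that $Z\sim\mu$ is realised as the pasting of a limiting triple inheriting this conditional structure; reading it on finite windows and invoking the a.e.-continuity above yields the Markov property of $(\omega_p(Z))_p$ under $\mu$ for every good $p$. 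The passage from good $p$ to all $p\in[0,1]$ is then routine: $p\mapsto\omega_p$ is right-continuous and its natural filtration satisfies $\mathcal F_{p+}=\mathcal F_p$ (for this particular process the germ at $p$ carries no extra information), so the Markov property at a dense set of parameters forces it everywhere, by a backward-martingale limiting argument along the filtration combined with right-continuity.

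The main obstacle is that the Markov property — and conditional independence in general — is \emph{not} preserved under weak limits of measures, so the implication ``it holds for each $\mu_{\Lambda_n}$, hence for $\mu$'' is not available off the shelf; this is exactly why I would route the argument through the dynamical coupling, where the conditional independence is a structural feature of a fixed coupling of a fixed family of driving coins rather than a property of the limiting measure. The delicate point then relocates to controlling the double limit $n\to\infty$, $t\to\infty$ of the coupled three-component process on $X$: one must check that the limiting $\omega$-trajectory still ``drives'' the two limiting components through their respective independent coin families, i.e.\ that the construction is stable under these limits. An alternative route, hinging on the same kind of input, is to work directly with the explicit finite-volume transition kernels of Grimmett's chains and show they converge, in a sense compatible with integration against the marginals, to kernels representing $\mathcal L_\mu(\omega_p\mid\omega_{q_1},\dots,\omega_{q_k})$ for $q_1<\dots<q_k=p$; here the key enabling fact is that Grimmett's update rule, though not continuous for the product topology, is governed by the connectivity events $\mathcal D_e$, each of which is determined by a finite (random) portion of the configuration whose size has a tail that is small uniformly in $n$ under $\mu_{\Lambda_n}$, so that the finite-volume kernels stabilise. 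Either way, establishing this stability is the step I expect to require genuine care rather than bookkeeping.
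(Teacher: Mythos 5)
Your main route --- transporting the three-component dynamical coupling $(\omega_t,Z^{\le p}_t,Z^{>p}_t)$ through the limits $n\to\infty$ and $t\to\infty$ rather than trying to pass conditional independence through weak convergence of measures --- is exactly the strategy the paper uses. The paper's proof is a one-paragraph remark: it says the ``same procedure works'', flags one technical point (because of the asymmetric convention $\omega_p(Z)(e)=\mathbbm 1_{Z(e)\le p}$, the finite-volume chains must be started from the full state $(\omega_0,Z^{\le p}_0,Z^{>p}_0)=(\mathbf 1,\mathbf 0,\mathbf 0)$ so that the monotone limits go the right way), and then defers all further detail to \cite{\GrimmettCoupling,\GrimmettFK}. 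You correctly identify the key subtlety that conditional independence is not a weak-limit-stable property, which is precisely why routing through the coupling is the right move, and you are also right that the delicate step is the stability of the coupled construction under the double limit (which the paper silently outsources to Grimmett).

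Two minor points of divergence. First, you do not mention the initial-state caveat; this is the only thing the paper's proof says explicitly, although for $q=2$ it is harmless since the infinite-volume measure is unique for all $p$, so your omission is not a gap for the case at hand. Second, your ``good $p$''/right-continuity scaffolding is probably avoidable and, as written, sits a bit uneasily between your two proposed routes: if the monotone limits $n\to\infty$ and $t\to\infty$ are realised as almost sure (not just weak) limits along a coupled family of finite-volume dynamics --- which is what the monotonicity structure buys you --- then the conditional independence is a pathwise structural feature of the limiting triple at every $p$ simultaneously, and one does not need to first establish it at atom-free $p$ and then extend by right-continuity. Your ``alternative route'' via convergence of transition kernels is genuinely different and, as you note, harder to control; it is not the paper's approach.
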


The same procedure works, provided that one is careful  with the initial state of the Markov chain.

\begin{remark}
The underlying dynamics is {\bf non-Fellerian}, and the limiting Markov process in the above theorem is derived from the monotonicity properties inherent to the dynamics. In particular, the relationship between this Markov process and its formal generator would need to be investigated. This seems to be a non-trivial task for the present dynamics. For this reason, we will not assume any explicit {\bf transition rule} for the infinite-volume dynamics $Z^\free_t$ (or $Z^\wired_t$) and will restrict ourselves to the ``compact case''.
\end{remark}

\subsection{Specific heat and self-organization}\label{s.SH}

\paragraph{Derivative of the edge-intensity of the random-cluster model.}

The first non-trivial effect which occurs in the near-critical random-cluster model is the fact that the derivative of the edge-intensity blows up around $p_c$ for $q\geq 2$. This implies that edges appear much faster in any monotone coupling near $p_c$ than they do for percolation.  
Define the edge-intensity function as follows: for all $p\in[0,1]$, let 
$\EI^{\fk_q}(p):= \phi_{p,q} \big( e\text{ is open} \big)\,,
$
where $e$ is any edge of $\Z^2$. It is not hard to check that at the critical (and self-dual) point $p_c(q)$, one has 
$
\EI^{\fk_q}(p_c) = \frac 1 2\,.
$ 

The quantity relevant to us here is the derivative in $p$ of the edge-density
$\frac{d}{dp}\EI^{\fk_q}(p)$. It corresponds to the average rate at which new edges appear in any possible monotone coupling $(\omega_{p,q})_{p\in[0,1]}$.
For integer $q\geq 2$, this quantity turns out to be linked to the so-called specific heat of the $q$-Potts model (The relationship between the derivative of the edge-intensity and the specific heat per site is detailed in \cite{\SpecificHeat}: for $q\geq 2$, they are within bounded factors from each other). 

This quantity is expected to behave like $|p-p_c(q)|^{\alpha(q)}$,
where $\alpha(q)= \frac{2(1-2u)}{3(1-u)}$ with $u=\frac{2}{\pi}\arccos(\sqrt q/2)$ and $q> 2$, and 0 for $q<2$. From this result (which is at the level of a prediction in the physics literature), it is reasonable to expect that if $\EI_n^{\fk_q}(p)$ denotes the edge-intensity for random-cluster model on $\Lambda_n=[-n,n]^2$ with wired boundary conditions, 
\[
\frac d {dp} \EI_n^{\fk_q}(p) \asymp n^{\alpha(q)}\,,
\]
as far as $n \lesssim L(p)$. In conclusion, as one raises $p$ near $p_c$, more edges will suddenly arrive. 

\begin{remark} For $q=2$, the results on the specific heat of the Ising model known since \cite{\Onsager, \FerdinandFisher} give that
\[
\frac d {dp} \EI(p) \sim a \log \frac 1 {|p-p_c|}\,, 
\]
as $p\to p_c$.  The finite-volume study of the specific heat (\cite{\Onsager, \FerdinandFisher}) leads to the following estimate:
let $\T_n$ be the torus $\Z^2 / n\Z^2$ and let  then
\[
\frac d {dp} \Bigm|_{p=p_c} \EI_n(p) \asymp  \log n \,.
\]
The extension to planar domains $\Omega_n:=  \frac 1 n \Z^2 \cap \, \Omega $ will be carried out in \cite{\SpecificHeat}, based on the recent results  from \cite{\ClementThesis} and \cite{\BdTperiodic, \BdTplane}.
\end{remark}

\paragraph{Near-critical behavior for $q\in(1,4]$ and self-organized monotone coupling.}

Let us consider the random-cluster model on $\Z^2$ with fixed cluster-weight $q\in(1,4]$ (we will drop it in some of the notation). In this paragraph, our goal is to illustrate that there must be a strong self-organized mechanism within the monotone Markovian coupling that goes beyond the specific heat effect.
To show this, we will take for granted the specific heat exponent and the pivotal exponent $\xi_4(q)$ given by 
$\alpha_4^{\fk_q}(n)=n^{-\xi_4(q)+o(1)}$, and based on this, we will estimate what would the correlation length exponent be if there was no self-organized mechanism (i.e., if new edges arrived in a Poissonian way, with an intensity measure that can depend on the current configuration but only up to bounded factors). 

Let us first describe our setup: we will restrict ourselves to a finite but very large box $\Lambda_n$ with wired boundary conditions. 

We start from a critical configuration $\omega_{p_c}$ in $\Lambda_n$ and raise $p$ to the level 
$p=p_c + \Delta p$ in such a way that one still has $n \lesssim L(p)$. From the above discussion, one expects that about $n^2 \, \Delta p \, n^{\alpha(q) \wedge 1}$
new edges will arrive. If we assume the absence of self-organization, {\em i.e.}, if edges arrive more or less independently of the current configuration (except possibly a local rate which would depend on whether the endpoints of the edge are connected or not), then each of these arrivals should be macroscopic pivotal flips with probability about $n^{- \xi_4(q)}$ (we implicitly harnessed the fact that the pivotal exponent does not vary below the critical length). Therefore, at  $n\approx L(p)$, we expect
\[
n^2 \, \Delta p \, n^{\alpha(q) \wedge 1} n^{-\xi_4(q)} \approx 1\,.
\]
Let $L^\mathrm{Poiss}(p)$ denote the correlation length obtained via the above analysis. We find
\[
L^{\mathrm{Poiss}}(p) \approx  \Bigl( \frac 1 {|p-p_c|} \Bigr)^{\frac 1 {2 - \xi_4(q) + \alpha(q)\wedge 1}}\,.
\]
Now, $\xi_4(q)=\frac{5}{2}-\frac34u-\frac1{2-u}$ which allows to compute the predicted exponent $\nu^{\mathrm{Poiss}}(q)$ of $L^{\mathrm{Poiss}}(p)$ in terms of $u$ only. Similarly, the critical exponent $\nu(q)$ associated to the behavior of $L^{\fk_q}(p)$ is equal to $\frac{2-u}{3(1-u)}$. A simple computation shows that $\nu(q)>\nu^{\mathrm{Poiss}}(q)$ for $q>1$ which means that the Poissonian correlation length is much larger than the regular correlation length. 

This computation can be made rigorous for $q=2$, as explained in the introduction.

\paragraph{The hyperscaling relation between correlation length and specific heat.}

Let us finally remark that there is the following well-known {hyperscaling relation} between correlation length and specific heat (see, e.g., \cite{Henkel}): 
\begin{equation}
2-\alpha(q)=\nu(q)d\,, \label{e.hyperscl}
\end{equation}
which is expected to hold for all $q$, provided that the dimension $d$ of the underlying lattice $\Z^d$ is low enough.

We have been arguing that the mere quantity of new edges arriving does not explain the correlation length alone, but the self-organized structure in which they arrive also matters. Nevertheless, the hyperscaling relation tells us that the correlation length is in fact determined by the change in the edge intensity, in some other way. Unfortunately, we have not managed to relate to each other the mechanisms for the hyperscaling and for self-organization.

\subsection{Existence of emerging clouds}\label{ss.clouds1}

We now give a concrete manifestation in Grimmett's coupling of the self-organized behavior appearing in any monotone coupling of random-cluster models. We restrict ourselves to the finite case, since the transition rule for the infinite volume Markov process $(Z_t)_{t\geq 0}$ has not been established.
Let then $\Lambda=(V,E)$ be a finite box in $\Z^2$. 
Given a sample $Z = Z_\Lambda \in [0,1]^{E}$ from Grimmett's monotone coupling $\mu_\Lambda$, for an edge $e\in E$, let $\cloud(e)$ be the set of edges which appear simultaneously with $e$:
\[
\cloud(e):= \{ f\in E \text{ s.t. } Z(f)=Z(e) \}\,.
\]

The following proposition gives the first hint of some ``non-linear'' behavior:

\begin{proposition}\label{pr.emerging}
Fix $q>1$. For any $N\geq 1$, let $(\omega_p(Z_\Lambda)_{p\in[0,1]}$ be a monotone coupling in the box $\Lambda$. The probability that clouds of at least $N$ edges appear simultaneously in $\omega_p(Z_\Lambda)$ at some $p\in (0,1)$ converges to 1 when the size of the box $\Lambda\nearrow \Z^2$. \end{proposition}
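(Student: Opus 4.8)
The statement to prove is Proposition~\ref{pr.emerging}: for fixed $q>1$, with probability tending to $1$ as $\Lambda\nearrow\Z^2$, some cloud of size at least $N$ appears simultaneously at some value $p\in(0,1)$. The key mechanism, already isolated in the construction of Grimmett's coupling, is the Dirac point mass in the law of the update variable $\mathcal U_e$: recall from \eqref{e.Ue} that conditionally on $T=T_e(Z_{t-})$, the variable $\mathcal U_e$ has an atom of size $\big[T-\frac{T}{T+(1-T)q}\big]\,\delta_T$ at the threshold value $T$. For $q>1$ this atom is strictly positive whenever $T\in(0,1)$. So whenever an edge $e$ is resampled at a moment when its threshold $T_e$ lies in, say, $[\tfrac14,\tfrac34]$, there is a probability bounded below by a constant $\rho=\rho(q)>0$ that $Z(e)$ lands exactly on $T_e$. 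The point is that $T_e$ is a function of the configuration $\omega_{T_e}$ seen by \emph{many} edges at once, so once the cluster structure near $p_c$ has a fixed geometry, many edges share the same threshold, and each independently has probability $\ge\rho$ of landing on it.

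\textbf{Key steps, in order.} First I would fix a large but constant-size region — concretely, take a square block $Q$ of side $R=R(N,q)$ inside $\Lambda$, where $R$ is chosen so that with probability $\ge 1-\eta$ (for $\eta$ small, using the RSW bounds of Theorem~\ref{RSW critical}) there is a configuration of the block, realizable with positive probability, in which a specific set of at least $N$ edges all have the same threshold value $T\in[\tfrac14,\tfrac34]$. The cleanest way to force $N$ edges to share a threshold: look for a ``bottleneck'' picture where $N$ parallel edges each individually complete the \emph{same} connection $x\leftrightarrow y$ in $\omega\setminus\{e\}$ at the same critical value $p$; e.g. arrange $N$ disjoint open paths between two vertices $x,y$ except that each path is missing exactly one edge, and all those $N$ missing edges have threshold equal to the maximal label along the rest of that common dual obstruction. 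Actually simpler: condition on the final configuration $\omega_1$ restricted to $Q$ being a fixed pattern in which a set $S$ of $N$ edges are such that $\omega_p(Z)\in\mathcal D_e$ first becomes true, for every $e\in S$, at one common value $p^\star$; this happens on a positive-probability event for the \emph{labels} restricted to $Q$. Then, run the Markov chain $Z_t$; with positive probability the last update of each $e\in S$ occurs after the labels outside $S$ in $Q$ have stabilized, at which moment its threshold is exactly $p^\star\in(\tfrac14,\tfrac34)$, and then independently each such $e$ hits the atom with probability $\ge\rho$. Conditionally on the stabilized environment these $N$ atom-events are independent (this is exactly the conditional-independence structure exploited in Lemma~\ref{l.MonotoneMarkov}), so with probability $\ge\rho^N>0$ all of $S$ lands on $p^\star$, i.e. $\cloud(e)\supseteq S$ for $e\in S$. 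This gives a single block $Q$ a constant probability $c=c(N,q)>0$ of containing a cloud of size $\ge N$.

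\textbf{Finishing by independence over disjoint blocks.} Tile $\Lambda$ by $m=m(\Lambda)$ disjoint translates $Q_1,\dots,Q_m$ of $Q$, with $m\to\infty$ as $\Lambda\nearrow\Z^2$. The events ``$Q_j$ contains a cloud of size $\ge N$'' are not independent, but they can be made \emph{conditionally} essentially independent by a standard Peierls/decoupling argument: surround each $Q_j$ by an annulus and condition on a closed dual circuit in it (which occurs at criticality, hence for all $p<p_c$ by monotonicity, with probability $\ge\eta_0>0$ in each annulus, by Theorem~\ref{RSW critical}); inside such an insulated block the local event has probability $\ge c$ by the argument above, and across blocks these become independent given the insulating circuits. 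Hence the probability that \emph{no} block contains a cloud of size $\ge N$ is at most $(1-c\,\eta_0)^{m'}$ for a number $m'$ of successfully-insulated blocks that still tends to infinity, which goes to $0$. Therefore $\Prob_{\mu_\Lambda}(\exists\text{ a cloud of size}\ge N)\to 1$.

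\textbf{Main obstacle.} The genuinely delicate point is making the ``$N$ edges share a common threshold'' event both (i) have probability bounded below uniformly and (ii) be compatible with the dynamical argument: one must verify that, in the Markov chain building $Z$, with positive probability each of the $N$ distinguished edges is resampled \emph{for the last time} when its current threshold equals the target value $p^\star$, and that this value is genuinely in $(0,1)$ (bounded away from $0$ and $1$) so that the atom size $\rho=\rho(q)$ is bounded below. The conditional independence of the atom-events across the $N$ edges, given the stabilized surrounding configuration, must be extracted carefully from the construction in Lemma~\ref{l.MonotoneMarkov} — this is where one genuinely uses that the ``extra randomness'' resolving each $\mathcal U_e$ is independent across edges given the $\omega_t$ trajectory. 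The decoupling over blocks is routine given RSW (Theorem~\ref{RSW critical}).
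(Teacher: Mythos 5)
Your overall strategy is the right one and matches the paper's in spirit: exploit the Dirac atom $\bigl[T-\frac{T}{T+(1-T)q}\bigr]\delta_T$ in the update law $\mathcal{U}_e$ to make several edges land on the same label, build a positive-probability local scenario for a cloud of size $\ge N$, and then tile $\Lambda$ by disjoint blocks so that the failure probability decays geometrically. However, there is one genuine gap that would need to be repaired, and one place where your argument is considerably more complicated than necessary.

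The gap: you invoke the RSW bounds of Theorem~\ref{RSW critical} twice --- once to guarantee a favourable critical geometry in the block $Q$, and once to manufacture insulating dual circuits in annuli around the blocks $Q_j$ so that the block events decouple. But Theorem~\ref{RSW critical} is a statement about $q=2$ only, and Proposition~\ref{pr.emerging} is asserted for every $q>1$. No RSW-type estimate is currently available for general $q>1$, so any proof that leans on it does not establish the proposition as stated. The paper's proof is careful to avoid this: it never appeals to crossing estimates. Instead, it builds the shared-threshold picture by brute force. One starts the heat-bath dynamics from $Z_0\sim\mu_\Lambda$ and observes that, over a unit time interval, \emph{every} edge in a fixed constant-size ladder (the sets $E_1$, $E_2$, $E_3$ of Figure~\ref{fig:E}) is resampled and lands in a prescribed range with probability bounded below by a constant --- this needs nothing but the fact that each update has an absolutely continuous component whose density is bounded below on $(0,1)$. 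After that window, the thresholds $T_e$ of all edges $e\in E_1\setminus\{e_0\}$ equal $Z_1(e_0)\in(1/4,1/2)$ deterministically, and in the next unit time interval one conditions on $E_2\cup E_3\cup\{e_0\}$ not being touched while all of $E_1$ is updated and hits the atom. Because we started at stationarity, $Z_2\sim\mu_\Lambda$, so this event is visible in a sample from $\mu_\Lambda$. For the tiling step, the paper likewise avoids RSW: it argues directly from the structure of the dynamics (independent Poisson clocks and independent update variables given the $\omega_t$ trajectory) that the indicator vector of the block-successes stochastically dominates an i.i.d. Bernoulli vector. This is the soft decoupling you should use in place of dual circuits.

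The second, less serious, issue is that your construction of the local event is under-specified. Arranging ``$N$ disjoint open paths between $x$ and $y$ each missing exactly one edge, with all $N$ missing edges sharing a threshold'' is a correct picture, but the paper's ladder ($E_2$ forming two long parallel open chains, $e_0$ as the single rung whose label defines the common threshold, the remaining $E_1$ rungs the cloud, and $E_3$ closed to insulate) is an explicit realization of exactly that picture with every probability lower bound manifest. Your appeal to Lemma~\ref{l.MonotoneMarkov} for the conditional independence of the atom events is also slightly off target --- that lemma is about the independence of $Z^{\le p}$ and $Z^{>p}$ given $\omega_p$, not independence across edges --- although the point you need (the auxiliary randomness resolving $\mathcal{U}_e$ is independent across $e$ given the $\omega_t$ trajectory) is true and is used in the proof of that lemma; you should cite the construction, not the statement. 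In short: replace all uses of RSW with the soft positive-probability and dynamics-structure arguments, pin down the local configuration explicitly, and the proof is then essentially the paper's.
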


This proposition is very easy to prove, yet one already sees here that the monotone Markovian coupling $(\omega_{\Lambda,p})_{0\le p \le 1}$ has a nature that is very different from the $q=1$ case.
\vskip 0.3 cm

\proof 
Let us consider the sets $E_1$, $E_2$ and $E_3$ in $\Lambda$ (which is assumed to be large enough) as defined in Fig.~\ref{fig:E}.

%

\begin{figure}[htdp]
\begin{center}
\vglue -0.2 cm
\begin{tabular}{cc}
\hskip 0 in \parbox{4in}{
\begin{align*}
E_1& := \bigcup_{l=0}^n \big\{ \langle (0,l), (1,l) \rangle \big\}\\
& \hskip 0.5 in \text{(horizontal inner edges)}\\
\ & \ \\
E_2& := \bigcup_{l=0}^{n-1} \big\{ \langle (0,l), (0,l+1) \rangle,\, \langle (1,l), (1,l+1) \rangle \big\}\\
&\hskip 0.5 in \text{(vertical inner edges)} \\
\ & \ \\
E_3& := \big\{\text{all edges neighboring $E_1\cup E_2$} \big\} \setminus \bigl( E_1\cup E_2 \bigr) 
\end{align*}}
&
\raise-1.1in\hbox{\includegraphics[width=0.2 \textwidth]{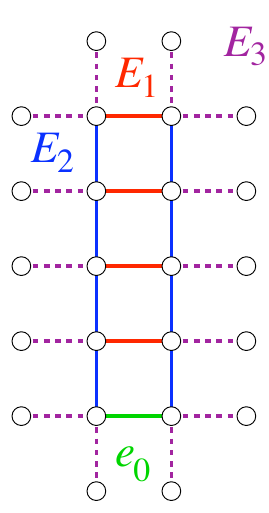}}
\end{tabular}
\end{center}
\vglue -0.5 cm
\caption{\label{fig:E}The definition of the sets $E_1$, $E_2$, $E_3$ and the edge $e_0\in E_1$. }
\end{figure}%

Let us sample $Z_0 = Z^\Lambda_{t=0}$ according to the invariant measure $\mu_\Lambda$, and let us run the dynamics for a unit time. With positive probability, all edges in $E$ are updated and their labels at time 1 satisfy the following: all labels in $E_2$ are smaller than 1/4, the edge $e_0= \langle (0,0), (1,0) \rangle$ gets a label in $(1/4,1/2)$, and all other labels in $E_1 \cup E_3$ are larger than $3/4$. Under such circumstances, all edges $e\in E_1\setminus \{e_0\}$ are such that $T_e(Z_{t=1}) = Z_1(e_0)$. It could be that this situation evolves later on, but we have that, with positive probability, none of the edges in $E_2\cup E_3\cup \{e_0\}$ are updated from time 1 to time 2.
Knowing this, again with positive probability, all edges in $E_1$ are updated from time 1 to time 2 and all of them take exactly the value $u:= Z_{1}(e_0)$ (this is due to the Dirac mass $\delta_u$ in the law $\mathcal{U}_e$). Since we started at equilibrium, $Z_{t=2}$ has the equilibrium law, and edges in $E_2$ are all open or all closed in the projections of $Z_{t=2}$. This shows that with positive probability, at least $N$ edges appear simultaneously as one raises $p$.

  If $\Lambda$ is getting very large, we can divide the box into a lattice of $2N \times 2N$ squares. Starting from $Z_{\Lambda,0} \sim \mu_\Lambda$, the above strategy works in each box independently of what happens in other boxes. Stated like that, it looks wrong, since obviously 
   the dynamics itself is not independent from one square to another, but all that is needed in the above procedure is a positive lower bound on the probability 
   that this ``scenario'' happens. Using the structure of the dynamics, it is not hard to see that if $y_1, \ldots, y_K$ denote the indicator functions of the events that the scenario 
   happened in the squares $i\in\{1,\ldots, K\}$, then there is an independent product of Bernoulli $\eps>0$ variables which is stochastically dominated by our vector $(y_1, \ldots, y_K)$.
   In particular, the emergence of clouds is somewhat ergodic in the plane. 
   
     By changing slightly the argument, one can show that there are such clouds for any open interval of the variable $p\in [0,1]$. \QED
   
\paragraph{An intuitive explanation for the clouds.} We end this subsection by a hand-waving argument why these clouds of simultaneously opening edges appear and may play an important role in the dynamics of any monotone coupling. Consider a monotone coupling $(\omega_p, \omega_{p+\Delta p})$. Due to the factor $q^{\#\;  \text{clusters}}$ in the partition function,
FK configurations $\omega_p$ tend to have as many clusters as possible. Without this factor, one would be in the case of $q=1$, {\em i.e.}, standard percolation, and the edge intensity would be exactly $p$. With $q>1$, the random-cluster configuration tries to maximize the number of clusters, hence the edge-intensity drops to
a smaller value $\EI(p)<p$. In some sense, there is a fight between entropy (under the product measure $p^{\#\; \text{open edges}} (1-p)^{\#\; \text{closed edges}}$, most configurations have edge-intensity $p$)
and energy (which would correspond here to  $- \log (q^{\#\; \text{clusters}})$). When one goes from $p$ to $p+\Delta p$, new edges are added due to the entropy effect, but in such a way that not so many clusters will 
merge into a single one. A good strategy for adding many edges without a significant increase in energy is the following
{\bf storing mechanism}.
Say we have two ``neighboring'' large clusters in $\omega_p$ with closed edges going from one to the other (these closed edges are then large-scale pivotal edges). Once we decide to open one of them,
it does not cost more energy to open a few others.

Now, we have just seen that the monotone coupling is Markovian in $p$: in particular, the only way for this storing mechanism to actually happen is  to have some values of $p$ where the system can simultaneously open several edges. This indeed can happen, due to the atom in the update distribution, as shown in Lemma~\ref{pr.emerging}, and the construction there was indeed a simple example of edges arriving simultaneously between two neighboring large clusters (the two components of $E_2$). 

It is worth noticing that this heuristic explanation (based on entropy/energy considerations plus the Markov property) hints that this ``non-linear phenomenon''  should be much stronger near the critical point. 
Indeed, near $p_c(q)$, there are many neighboring large clusters (i.e., many large scale pivotal points), which makes the {storing mechanism} more efficient. Away from criticality, this is not the case anymore. This intuition explains, for example, why we observe a blow-up of the derivative of the edge-intensity near $p_c$, and why the emerging clouds are more important there.

\subsection{Open questions on the structure of emerging clouds}\label{ss.clouds2}

\paragraph{Finite volume case.}
The previous subsection shows that there are non-trivial clouds with positive probability and the proof is not quantitative at all. It would be interesting to obtain information on the geometry of these clouds, which are witnesses of the long range dependency of the model. Let us consider the case of $G=\Lambda_n:= [-n,n]^2$ with {wired} boundary conditions. We strongly suspect the following behavior:

\begin{question}[Macroscopic clouds near $p_c$]\label{q.macroCloud}
For all $n\geq 1$, with $\mu_{\Lambda_n}$-probability at least a universal constant $c>0$, there is at  least one {\bf macroscopic cloud} in $\Lambda_n$, i.e., whose diameter is larger than $cn$. Furthermore, with probability going to 1 as $n\to \infty$, the labels  of such macroscopic clouds concentrate around the critical value $p_c(q=2)$.
\end{question}

To answer such a question, it is natural to run the dynamics at equilibrium (i.e., $Z_0^n \sim \mu_{\Lambda_n}$) 
for a short amount of time that is given precisely by the rescaling
$\tau_n:= [n^{2} \alpha_4^{\fk_q}(n)]^{-1}$.
Doing so, only finitely many macroscopic {pivotal} edges will be resampled, and it is easy to convince ourselves that with positive probability at least two of them will pick the same label thus creating a macroscopic {cloud}. For $q=2$, this intuition is close to being rigorous, since we have at our disposal a
'stability property' from the forthcoming \cite{\DFKSL} which suggests that the ``geometry'' of $Z^n_{t=\tau_n}$ could be recovered with high precision from $Z^n_0$ plus
the updates of the initially macroscopically important edges (neglecting the ``smaller'' updates).  However, the stability  result holds only for $\omega_{p_c}$, not the entire coupling $Z$. Thus, a certain control on the {\it concentration} of the labels around $p_c$ would be helpful for both parts of Question~\ref{q.macroCloud}.

The intuition that big clouds should appear only around the critical point can be translated into the following conjecture:

\begin{question}[Local clouds away from $p_c$]
For any $\delta>0$, emerging clouds with labels outside of $(p_c-\delta, p_c+\delta)$ are {\bf local}
in the sense that the largest such cloud in $\Lambda_n$ should be of logarithmic size.
\end{question}

A natural way to attack this question would be via a {\bf coupling} argument. Namely, construct a coupling $(Z_{\Lambda_n}^{\geq p_c + \delta}, \tilde Z_{\Lambda_n}^{\geq p_c + \delta})$ (see the notation in Subsection~\ref{ss.coupling}) whose marginals are $\mu_{\Lambda_n}^{\geq p_c + \delta}$, and whose coordinates 
are identical on a small neighborhood of the origin, but with probability at least $\lambda^k$ (with $\lambda\in (0,1)$), are independent of each other outside a box of size $k$ (an exponential decay of correlations). Such a statement is proved for the supercritical (or subcritical) random-cluster measure $\phi_{\Z^2,p,q}$, yet the lack of a DLR (spatial Markov) property for our monotone coupling $\mu_{\Lambda_n}$ prevents us for extending the result to the coupling in an obvious way.

Finally, it would be interesting to prove quantitative results on the size of emerging clouds in the finite volume case ($\Lambda_n$). This question is further discussed in \cite{\SpecificHeat}.

\paragraph{Infinite volume case.} Clouds are well-defined objects: since there is a unique limiting measure $\mu_{\Z^2}$ of Grimmett's monotone coupling and for any  $e\in E(\Z^2)$, $\cloud(e)$ can still be defined relatively to a sample $Z$ from  $\mu_{\Z^2}$. 

\begin{question}
Prove that a.s.~there exist non-trivial emerging clouds.
\end{question}

This does not follow directly from the existence of non-trivial clouds in the finite volume case.
Assuming the above question, the next natural question would be the following:

\begin{question}
Is it the case that emerging clouds are a.s.~finite when $q\le4$?
\end{question}

Note that for large $q$, an infinite number of edges appear at $p_c(q)$. 

\section{What about the influence of an edge?}\label{ss.influence}

Let us mention an alternative approach to a ``geometric'' understanding of the near-critical random-cluster model.
As a continuation of the work by Kesten on near-critical percolation \cite{\KestenScaling}, Russo's formula should be replaced by a slightly different formula. Fix $q\geq 1$, $\ep>0$, and an increasing event $A$. Then (see \cite{\GrimmettFK}),
 \begin{equation*}
    \frac{d}{dp}\phi_{G,p,q}^\xi(A)\asymp\sum_{e\in E}I_A^p(e),
  \end{equation*}
  where the constants in $\asymp$ depend on $q$ and $\ep$ only. Above, $I_A^p(e)$
denotes the \emph{(conditional) influence} on $A$ of the edge $e\in E$ defined by \begin{equation}\label{russo_influence} I_A^p(e) := \phi_{G,p,q}^\xi (A|e\text{ is open}) - 
\phi_{G,p,q}^\xi (A|e\text{ is closed}). \end{equation}
It is tempting to use this extension of Russo's formula to see what our results on the correlation length (Theorem \ref{thm:correlation length})
may imply on the {\it influences} $I_A^p(e)$.
To avoid boundary issues, let us consider the case of the torus $\T_n:= \Z^2/ n \Z^2$, and let $A_n$ be the event that 
there is an open circuit with non-trivial homotopy in $\T_n$. It is easy to check (by self-duality) that $\phi_{p_c,2} \big( A_n \big) \le 1/2$.
The results from Section \ref{sec:proofs} can easily be generalized to the torus.
In particular, there exists a constant $\lambda>0$ such that if 
$p_n:=p_c(2)+ \lambda \frac{\log n}{n}$, then
\[
\phi_{p_n, 2} \big( A_n\big) \geq 3/4\,.
\]

Using \eqref{russo_influence}, this says that 
\[
\int_{p_c}^{p_c+ \lambda \frac {\log n}{ n}}  \big( I_{A_n}^p(e_{hor}) + I_{A_n}^p(e_{ver}) \big) dp \geq \Omega(1) \frac 1 {n^2}\,,
\]
where $e_{hor}$ and $e_{ver}$ are any horizontal and vertical edges in $\T_n$. Since it is natural to expect that 
on the interval 
$[p_c, p_c+\lambda \frac {\log n}{n}]$, {\it influences} behave smoothly (proving such a statement could require to understand the geometry of the clouds mentioned in the previous section), the following conjecture should hold.
\begin{conjecture}
For any 
$n\geq 1, \lambda>0, \, p\in [p_c -\lambda\frac {\log n} n, p_c+\lambda\frac {\log n} n]$ and any $e\in \T_n$,
\[
I_{A_n}^p(e) \geq c\, \frac 1 {n \log n}\,,
\]
where $c=c(\lambda)$ is some positive constant.
\end{conjecture}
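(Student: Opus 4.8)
The plan is to integrate the generalized Russo formula (Proposition~\ref{russo_influence}) across the near-critical window, obtaining an \emph{averaged} lower bound on the influence, and then to upgrade this to the claimed \emph{pointwise} bound via a near-critical stability statement for influences.

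First I would transfer Theorems~\ref{RSW offcritical} and~\ref{thm:correlation length} to the torus $\T_n$: the arguments in Section~\ref{sec:proofs} only use the strip and the half-plane, which embed in $\T_n$, together with RSW at criticality (Theorem~\ref{RSW critical}), which is standard on the torus. Combined with RSW for rectangles of bounded aspect ratio, this produces a constant $\lambda_0>0$ such that, writing $p_n:=p_c(2)+\lambda_0\frac{\sqrt{\log n}}{n}$, one has $\phi_{p_n,2}(A_n)\geq 3/4$, whereas $\phi_{p_c,2}(A_n)\leq 1/2$ by self-duality. Integrating Proposition~\ref{russo_influence},
\[
\tfrac14 \;\le\; \phi_{p_n,2}(A_n)-\phi_{p_c,2}(A_n)\;=\;\int_{p_c}^{p_n}\frac{d}{dp}\phi_{p,2}(A_n)\,dp\;\asymp\;\int_{p_c}^{p_n}\sum_{e\in E(\T_n)}I_{A_n}^p(e)\,dp\,.
\]
Since $\T_n$ is vertex-transitive and $A_n$ is invariant under translations and under the reflection $x\leftrightarrow y$, the influence $I_{A_n}^p(e)$ does not depend on $e$, so $\sum_e I_{A_n}^p(e)=2n^2\,I_{A_n}^p(e)$; hence $\int_{p_c}^{p_n}I_{A_n}^p(e)\,dp\gtrsim n^{-2}$, and dividing by the window length $p_n-p_c\asymp\frac{\sqrt{\log n}}{n}$ shows that the \emph{average} of $I_{A_n}^p(e)$ over $[p_c,p_n]$ is $\gtrsim \frac{1}{n\sqrt{\log n}}$.

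It remains to pass from this average to a bound valid at every $p$ in the window. For this I would prove a \textbf{near-critical stability} property for the influence: for $p,p'$ in $\bigl[p_c-\lambda\frac{\sqrt{\log n}}{n},\,p_c+\lambda\frac{\sqrt{\log n}}{n}\bigr]$ one has $I_{A_n}^p(e)\asymp I_{A_n}^{p'}(e)$, with constants depending only on $\lambda$. Granting this, the averaged bound immediately yields $I_{A_n}^p(e)\gtrsim\frac{1}{n\sqrt{\log n}}$ for all $p$ in the window, which is the claim. To establish the stability I would first rewrite $I_{A_n}^p(e)=\phi_{\T_n/e,\,p,2}(A_n)-\phi_{\T_n\setminus e,\,p,2}(A_n)$ ($e$ contracted, resp.\ deleted) via the domain Markov property, realise it in a monotone coupling as the probability of a macroscopic crossing-type event ``localised at $e$'', and then run a Kesten-type differential-inequality argument: control $\bigl|\frac{d}{dp}\log I_{A_n}^p(e)\bigr|$ by dyadic sums of influence/pivotal probabilities, using quasi-multiplicativity and arm separation for FK-Ising (as developed in \cite{\FKpivotals}) together with the off-critical RSW bounds of Theorem~\ref{RSW offcritical}. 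A key point, as emphasised in Subsection~\ref{ss.influence}, is that for $q>1$ the pivotal probability and the influence differ, so Russo's formula must be fed with the influence on both sides of the inequality; the exponent governing the stability is then $\iota(2)$ rather than $\xi_4(2)$, which is exactly what makes the numerics consistent with the conjectured order $\frac{1}{n\sqrt{\log n}}$.

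The main obstacle is this last step. Inside the window $\bigl[p_c\pm\lambda\frac{\sqrt{\log n}}{n}\bigr]$, Theorem~\ref{thm:correlation length} only guarantees a correlation length $L(p)\gtrsim n/\sqrt{\log n}$, whereas the event $A_n$ lives at scale $n$; thus the off-critical RSW estimates --- and hence quasi-multiplicativity with uniform constants --- are available only up to a scale that falls short of $n$ by a factor $\sqrt{\log n}$. Bridging this ``borderline mesoscopic'' gap between $n$ and $L(p)$ is precisely what one would need to close the differential-inequality argument, and it is presumably the reason the statement is formulated here only as a conjecture.
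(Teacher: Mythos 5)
Note first that the statement you were given is presented in the paper as a \emph{conjecture}, not a theorem: the paper offers no proof, only the heuristic that motivates it, and explicitly states afterward that ``These conjectures are beyond reach with the techniques of the present paper.'' Your first paragraph --- transferring Theorems~\ref{RSW offcritical}/\ref{thm:correlation length} to $\T_n$, using self-duality to get $\phi_{p_c,2}(A_n)\le 1/2$, integrating Proposition~\ref{russo_influence} to obtain $\int_{p_c}^{p_n}\sum_e I_{A_n}^p(e)\,dp\gtrsim 1$, and then averaging --- is essentially a cleaner, more explicit version of exactly the discussion in Subsection~\ref{ss.influence} (where the paper even says the conjecture ``should hold'' if the influences ``behave reasonably smoothly''). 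Your observation that transitivity and the $x\leftrightarrow y$ symmetry reduce the sum to $2n^2 I_{A_n}^p(e)$ is a correct and useful tightening of what the paper leaves implicit.

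The gap is precisely what you flag in your last paragraph, and it is the honest-to-goodness reason this is a conjecture rather than a theorem. Passing from the averaged bound to a pointwise one requires a near-critical stability statement of the form $I_{A_n}^p(e)\asymp I_{A_n}^{p'}(e)$ across the window, and your proposed Kesten-style differential-inequality approach would need quasi-multiplicativity and arm-separation for the \emph{influence} quantity (not the four-arm probability, as you rightly note, echoing the $\iota(q)\neq\xi_4(q)$ point of Subsection~\ref{ss.influence}), with uniform constants up to scale $n$. But Theorem~\ref{thm:correlation length} only gives $L(p)\gtrsim n/\sqrt{\log n}$ at the edge of the window, so the off-critical RSW input fails by the very $\sqrt{\log n}$ factor appearing in the conjectured bound; moreover, no quasi-multiplicativity theory for FK influences is established anywhere in the paper or in \cite{\FKpivotals} (which treats pivotal probabilities at criticality only). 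So your proposal correctly reconstructs the paper's reasoning, and correctly isolates the missing ingredient; it does not, and could not with the tools available here, close the argument.
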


In fact since it is reasonable to conjecture that in Theorem \ref{thm:correlation length}, one has actually
$L_{\rho,\ep}^\xi(p)\asymp |p-p_c|^{-1}$, one may strengthen the previous conjecture into 
the following one:

\begin{conjecture}
For any $n\geq 1, \lambda>0, \, p\in [p_c -\frac \lambda n, p_c+\frac \lambda n]$ and any $e\in \T_n$, 
\[
c\, \frac 1 {n} < I_{A_n}^p(e)  < c^{-1} \, \frac 1 {n} \,,
\]
where $c=c(\lambda)$ is some positive constant.
\end{conjecture}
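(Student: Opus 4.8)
The plan is to recast the conjecture, through the Russo-type influence formula of Proposition~\ref{russo_influence} together with the symmetries of the torus, as a two-sided estimate on the single derivative $D_n(p):=\frac{d}{dp}\,\phi_{\T_n,p,2}(A_n)$. Indeed, $\T_n=\Z^2/n\Z^2$ together with the event $A_n$ and the measure $\phi_{\T_n,p,2}$ is invariant under all lattice translations and under the quarter-turn rotation, and the group they generate acts transitively on the $2n^2$ edges; hence $I_{A_n}^p(e)$ does not depend on $e$, and, writing it $J_n(p)$, Proposition~\ref{russo_influence} gives $D_n(p)\asymp n^2 J_n(p)$ (the implied constants being harmless, since $p$ stays in a compact subinterval of $(0,1)$). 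So the conjecture is \emph{equivalent} to $D_n(p)\asymp n$ uniformly over $p\in[p_c-\lambda/n,\,p_c+\lambda/n]$, and --- assuming, as the statement does, the sharp correlation length $L_{\rho,\ep}^\xi(p)\asymp|p-p_c|^{-1}$ --- it suffices to produce matching upper and lower bounds for $D_n$ on this window.

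\emph{Two ingredients.} The first is an integral constraint. By Theorem~\ref{RSW offcritical}, generalized to $\T_n$ as in Subsection~\ref{ss.influence}, the probability $\phi_{\T_n,p,2}(A_n)$ stays bounded away from $0$ and $1$ on the whole window; moreover, using Grimmett's monotone coupling of Subsection~\ref{ss.coupling} together with RSW bounds for the dual model, one shows $\phi_{\T_n,p_c+\lambda/n,2}(A_n)-\phi_{\T_n,p_c-\lambda/n,2}(A_n)\ge c(\lambda)>0$ (the sharp form of Theorem~\ref{thm:correlation length} guarantees, once $\lambda$ is past the transition constant, that this difference is even close to $1$). Integrating the identity $\int_{p_c-\lambda/n}^{p_c+\lambda/n}D_n(p)\,dp=\phi_{\T_n,p_c+\lambda/n,2}(A_n)-\phi_{\T_n,p_c-\lambda/n,2}(A_n)$ then gives $\int_{\mathrm{window}}D_n\asymp1$, i.e.\ $D_n$ has \emph{average} value $\asymp n$ on a window of width $\asymp1/n$. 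The second, and decisive, ingredient is a near-critical stability of the influence: $J_n(p)\asymp J_n(p')$ for all $p,p'$ in the window --- this is precisely the ``influences behave smoothly'' heuristic invoked in Subsection~\ref{ss.influence}. Together with the integral constraint it upgrades the average bound to the pointwise bound $D_n(p)\asymp n$, which by the reduction is the conjecture.

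\emph{Proving the stability.} I would imitate Kesten's ``differentiate twice'' scheme of Subsection~\ref{ss.mechanism}. Writing $J_n(p)=\phi_{\T_n,p,2}(A_n\mid e\text{ open})-\phi_{\T_n,p,2}(A_n\mid e\text{ closed})$ and using the domain Markov property to realize each conditional measure as a genuine FK-Ising measure on $\T_n\setminus e$ (with the two endpoints of $e$ wired, resp.\ free), one differentiates in $p$ through the covariance identity $\frac{d}{dp}\phi(B)=\frac1{p(1-p)}\Cov_p\!\big(o(\omega),\mathbbm{1}_B\big)$ and re-expands the covariance as a sum, over the \emph{other} edges $f$, of their influences on the relevant events. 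The estimate to aim for is $\big|\tfrac{d}{dp}\log J_n(p)\big|\le C\,n^2 J_n(p)\asymp C\,D_n(p)$; since the right-hand side integrates to $O(1)$ over the window, Gr\"onwall's inequality then delivers $J_n(p)\asymp J_n(p')$.

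\emph{Main obstacle.} This last inequality is exactly where the phenomenon of Subsection~\ref{ss.influence} bites. In percolation the analogous bound $\big|\tfrac{d}{dp}\alpha_4^p(n)\big|\le C\,\alpha_4^p(n)\,n^2\alpha_4^p(n)$ rests on quasi-multiplicativity of four-arm events; but here $J_n(p)$ is \emph{not} the probability that $e$ is pivotal for $A_n$ (which is only $n^{-35/24+o(1)}$, by \cite{\FKpivotals}) --- it is dominated by a ``measure-change'' term reflecting the self-organized clouds of Subsection~\ref{ss.clouds1}, and that term is invisible to arm-event combinatorics. Controlling its $p$-derivative amounts to bounding the two-edge (joint) influences that appear upon differentiating $J_n(p)$, and for this a genuinely new input seems unavoidable --- most plausibly Smirnov's massive fermionic observable, used as in the proof of Theorem~\ref{RSW offcritical} to keep connection probabilities uniformly under control across the whole window. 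With only the proven logarithmic correlation length in place of the sharp one, the same scheme would yield at best the weaker, one-sided bound $I_{A_n}^p(e)\ge c\,(n\sqrt{\log n})^{-1}$ of the first conjecture.
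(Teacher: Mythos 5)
The statement you were asked to prove is a \emph{conjecture} in the paper, not a theorem: the authors derive it heuristically in Subsection~\ref{ss.influence} and then immediately write ``These conjectures are beyond reach with the techniques of the present paper.'' So there is no proof in the paper to compare yours against, and your write-up is a plan rather than a proof --- something you make clear yourself under ``Main obstacle.''

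Your reduction is the same as the paper's heuristic: torus symmetry gives $I_{A_n}^p(e)=J_n(p)$ for every $e$; Proposition~\ref{russo_influence} converts the conjecture to $\frac{d}{dp}\phi_{\T_n,p,2}(A_n)\asymp n$ on the window; and the identity $\int D_n\,dp=\phi_{p_c+\lambda/n}(A_n)-\phi_{p_c-\lambda/n}(A_n)$, combined with the (itself unproven) sharp correlation length $L(p)\asymp|p-p_c|^{-1}$, gives the bound only in an averaged sense. The genuine gap, which you name but do not fill, is the near-critical stability $J_n(p)\asymp J_n(p')$ across the window. Kesten's scheme works for $q=1$ because there the influence of $e$ \emph{is} the four-arm probability at $e$, and quasi-multiplicativity bounds its logarithmic derivative by $n^2\alpha_4^p(n)$. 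For $q=2$ the conditional measures $\phi(\cdot\mid e\text{ open})$ and $\phi(\cdot\mid e\text{ closed})$ genuinely differ (wired versus free endpoints of $e$), $J_n(p)\asymp n^{-1}$ is dramatically larger than the pivotal probability $n^{-35/24+o(1)}$, and the differentiated expression involves two-edge influence covariances for which no quasi-multiplicativity-type control currently exists. The massive fermionic observable underlying Theorem~\ref{RSW offcritical} controls boundary-to-boundary connection probabilities in Dobrushin domains, not these covariances, so invoking it here is speculative rather than a reduction to known tools. Absent that piece, the integral constraint yields no pointwise conclusion, and --- as you also note --- with only the proven $\sqrt{\log}$-corrected correlation length in place of the conjectured sharp one, even the averaged version degrades.
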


\paragraph{Acknowledgments.} We wish to thank Vincent Beffara, Geoffrey Grimmett, Alan Hammond, Leonardo Rolla, Alan Sokal and Wendelin Werner for very inspiring conversations. We also wish to thank Alan Hammond for carefully reading the article.

Some of the starting ideas of this work were found during the Discrete Probability Semester in 2009 at the Mittag-Leffler Institute in Sweden. 
The work of HDC was supported by the ANR grant
BLAN06-3-134462, the EU Marie-Curie RTN CODY, the ERC AG CONFRA, and the
Swiss FNS.
The work of CG was supported by the ANR grant BLAN06-3-134462.
The work of GP was supported by an NSERC Discovery Grant at the University of Toronto, an EU Marie Curie International Incoming Fellowship at the Technical University of Budapest, by the Hungarian National Science Fund OTKA grant K109684, and by the MTA R\'enyi ``Lend\"ulet'' Limits of Discrete Structures Research Group.

\small \bibliographystyle{alpha}
\bibliography{NearCritical.ref}
\normalsize


\ \\
{\bf Hugo Duminil-Copin}\\
Universit\'e de Gen\`eve\\
\url{http://www.unige.ch/~duminil/}\\
\\
{\bf Christophe Garban}\\
CNRS \& Ecole Normale Sup\'erieure de Lyon, UMPA \\
46 all\'ee d'Italie\\
69364 Lyon Cedex 07 France \\
\url{http://perso.ens-lyon.fr/christophe.garban}\\
\\
{\bf G\'abor Pete}\\
R\'enyi Institute of the Hungarian Academy of Sciences, and\\
Institute of Mathematics, Technical University of Budapest,\\
Egry J\'ozsef u. 1, Budapest 1111, Hungary\\
\url{http://www.math.bme.hu/~gabor}\\

\end{document}